\numberwithin{equation}{section}
\numberwithin{figure}{section}
\theoremstyle{plain}
\newtheorem{thm}{\protect\theoremname}[section]
  \theoremstyle{remark}
  \newtheorem{rem}[thm]{\protect\remarkname}
  \theoremstyle{plain}
  \newtheorem*{thm*}{\protect\theoremname}
  \theoremstyle{plain}
  \newtheorem{prop}[thm]{\protect\propositionname}
  \theoremstyle{definition}
  \newtheorem{defn}[thm]{\protect\definitionname}
  \theoremstyle{plain}
  \newtheorem{lem}[thm]{\protect\lemmaname}
  \theoremstyle{plain}
  \newtheorem{cor}[thm]{\protect\corollaryname}
  \providecommand{\corollaryname}{Corollary}
  \providecommand{\definitionname}{Definition}
  \providecommand{\lemmaname}{Lemma}
  \providecommand{\propositionname}{Proposition}
  \providecommand{\remarkname}{Remark}
  \providecommand{\theoremname}{Theorem}
\providecommand{\theoremname}{Theorem}
\begin{document}
\global\long\def\real{\mathbb{R}}
\global\long\def\nat{\mathbb{N}}
\global\long\def\q{\mathbb{Q}}
\global\long\def\z{\mathbb{Z}}
\global\long\def\c{\mathbb{C}}

\global\long\def\d{\mathrm{d}}
\global\long\def\p{\mathbb{P}}

\global\long\def\norm#1{\left\Vert #1\right\Vert }

\global\long\def\g{\mathfrak{g}}

\global\long\def\h{\mathbb{H}}
\global\long\def\id{\mathrm{id}}

\global\long\def\im{\operatorname{Im}}

\global\long\def\hom{\operatorname{Hom}}

\global\long\def\rep{\operatorname{Rep}}

\global\long\def\endd{\operatorname{End}}

\global\long\def\sl{\mathrm{SL}}
\global\long\def\psl{\mathrm{PSL}}

\global\long\def\gl{\mathrm{GL}}
 \global\long\def\so{\mathrm{SO}}

\global\long\def\o{\mathcal{O}}

\global\long\def\supp{\operatorname{supp}}
\global\long\def\isom{\operatorname{Isom}}

\global\long\def\sgn{\operatorname{sign}}

\global\long\def\sla{\mathfrak{sl}}

\global\long\def\vol{\operatorname{Vol}}

\global\long\def\tr{\operatorname{tr}}

\title{Homogeneous Asymptotic Limits of Uniform Averages on Fuchsian Groups}

\author{Tamir Hemo}
\address{Department of Mathematics, Technion - Israel Institute of Technology, Haifa, Israel 32000}
\begin{abstract}
We show that averages on geometrically finite Fuchsian groups, when
embedded via a representation into a space of matrices, have a homogeneous
asymptotic limit under appropriate scaling. This generalizes some
of the results of Maucourant to subgroups of infinite co-volume. The resulting disrtibution is expressed in terms of a measure considered by Mohammadi and Oh. 
\end{abstract}

\maketitle

\section{Introduction}

Let $G$ be a topological group and $\Gamma<G$ a discrete subgroup.
The asymptotic properties of matrix elements of $\Gamma$
evaluated in various representations of $G$ are related to many different problems
in mathematics, from expander graphs to Diophantine approximation, the Zaremba conjecture and Apollonian circles
See \cite{kontorovich2013apollonius},  \cite{bourgain2014zaremba}, and \cite{fuchs2014ubiquity}, for more. Such problems can be effectively studied using harmonic analysis of the corresponding symmetric spaces \cite{oh2014harmonic}.

One method of studying the asymptotic properties of matrix elements
in linear groups was initiated by Maucourant \cite{maucourant2007homogeneous},
who considered the following question. Let $G$ be a non-compact connected
semisimple Lie group with finite center and let $\rho:G\rightarrow\gl\left(V\right)$
be a faithful representation of $G$ in a finite dimensional real
vector space $V$. Let $\d g$ denote a Haar measure on $G$. Let
$f$ be a compactly supported continuous function on $\endd\left(V\right)$.
Maucourant showed that for some rational number $d$, and a positive
integer $e$ between $0$ and $\operatorname{rank}_{\real}G-1$, the
measures
\[
\mu_{G,T}\left(f\right)=\frac{1}{T^{d}\log\left(T\right)^{e}}\int_{G}f\left(\frac{\rho\left(g\right)}{T}\right)\,\d g,\quad f\in C_{c}\left(\endd\left(V\right)\right).
\]
on $\endd\left(V\right)$ converge weakly as $T$ tends to infinity.
Moreover, $e$ and $d$ can be described explicitly, as is the limit
measure $\mu_{\infty}$. Moreover, $\mu_{\infty}$ is homogeneous
of degree $d$ in the sense that for any Borel set $E\subseteq\endd\left(V\right)$
we have $\mu_{\infty}\left(tE\right)=t^{d}\mu_{\infty}\left(E\right)$.

Let $\Gamma$ be an irreducible lattice in $G$, and for $T>0$ consider
the measure:
\begin{align*}
\mu_{\Gamma,T} & =\frac{1}{T^{d}\log\left(T\right)^{e}}\sum_{\gamma\in\Gamma}\delta_{\frac{\rho\left(\gamma\right)}{T}}.
\end{align*}
It is possible to compare $\mu_{\Gamma,T}$ with $\mu_{G,T}$ (see
\cite[section 3]{maucourant2007homogeneous}) and get that as $T\rightarrow\infty$
the measures $\mu_{\Gamma,T}$ converge weakly to
\[
\lim_{T\rightarrow\infty}\mu_{\Gamma,T}=\frac{1}{\vol\left(\Gamma\backslash G\right)}\mu_{\infty},
\]
where $\vol\left(\Gamma\backslash G\right)$ is the volume of a fundamental
domain of $\Gamma$ with respect to the chosen Haar measure on $G$.

In this work we wish to study averaging operators similar to $\mu_{\Gamma,T}$
for $G=\sl_{2}\left(\real\right)$ and discrete subgroups $\Gamma<\sl_{2}\left(\real\right)$
which are not necessarily of finite co-volume. In this case it is
not possible to compare the measures $\mu_{\Gamma,T}$ and $\mu_{\sl_{2}\left(\real\right),T}$
for a general discrete subgroup $\Gamma$ as the asymptotics in the number of points in $\Gamma$ will be different. The normalization of the average should be proportional to the asymptotics of $\left|\rho\left(\Gamma\right)\cap B_{T}\right|$, with $B_{\mathrm{T}}$ being a ball of raduis $T$ with respect to a bi-$K$ invariant norm.
By a result of Lax and Phillips \cite[theorem 1]{lax1982asymptotic}, if $\Gamma$ is geometrically finite
\[
\left|\rho\left(\Gamma\right)\cap B_{T}\right|\sim T^{\delta d}
\]
as $T\rightarrow \infty$ where $\delta$ is the critical exponent of $\Gamma$.

We can then consider the measures
\begin{equation}
\mu_{\Gamma,T}=\frac{1}{T^{\delta d}}\sum_{\gamma\in\Gamma}\delta_{\frac{\rho\left(\gamma\right)}{T}},\quad T>0.\label{eq:intro}
\end{equation}
In this paper we prove that if $\delta>\frac{1}{2}$, then as $T\rightarrow \infty$ the measures
$\mu_{\Gamma,T}$ converge weakly to a nonzero measure
$\mu_{\Gamma}$ on $\endd\left(V\right)$ first considered by Mohammadi and Oh \cite[Definition 1.8.]{Mohammadi_2015}.
The measure $\mu_{\Gamma}$ is homogeneous of degree $\delta d$ and can be described explicitly in terms of $\rho$ and the Patterson-Sullivan
measure on the limit set of $\Gamma$. 

A study of the angular distribution of orbits in $\real^{2}$ of finitely
generated subgroup of $\sl_{2}\left(\real\right)$ was carried by
Maucourant and Schapira \cite{mauschapira} using ergodic properties
of the horocycle flow. The present work gives an alternative proof
for some of their results corresponding to a similar normalization
as the one we are considering. 

Our argument follows the general paradigm described in \cite{Mohammadi_2015}, specialized to $\mathrm{SL}_{2}(\mathbb{R})$ together with the argument of \cite{maucourant2007homogeneous}. Namely, we decompose the sum \ref{eq:intro} into averages
over annuli depending on the norm of the elements in $\rho\left(\gamma\right)$.
Each individual average over an annulus can then be analyzed using estimates \cite[theorem 1.14]{Mohammadi_2015} on Fourier coefficients. In the case we consider, the relevant spectral estimates were first established by Bourgain, Kontorovich, and Sarnak \cite[theorem 1.5.]{bourgain2010sector}. 

\subsubsection*{Acknowledgements}

The author would like to thank Amos Nevo for his guidance and advice
during the course of this work and to Hee Oh for her comments on an earlier draft of this paper. 
\tableofcontents{}
\section{\label{sec:The-main-results}The Main Results}

Let $\Gamma<\sl_{2}\left(\real\right)$ be a discrete subgroup containing
$-1$ such that the image $\overline{\Gamma}$ of $\Gamma$ in $\psl_{2}\left(\real\right)$
is a non-elementary geometrically finite Fuchsian group of the second
kind with critical exponent $\delta>\frac{1}{2}$. Let $\mathbb{H}^{2}$
be the hyperbolic plane and let $\overline{\mathbb{H}^{2}}=\mathbb{H}^{2}\cup\partial\mathbb{H}^{2}$
be its compactification with $\partial\mathbb{H}^{2}$ the Gromov
boundary. In the Poincaré disk model $\partial\mathbb{H}^{2}$ can
be identified with the unit circle and in the upper half plane model
with $\real\cup\left\{ \infty\right\} $.

For the group $\Gamma$ and any two points $x,y\in\mathbb{H}^{2}$,
Patterson \cite{patterson1976limit} (see also \cite{sullivan1981ergodic})
defined a measure $\mu_{x,y}$ on the limit set $\Lambda\left(\Gamma\right)$
of $\Gamma$. This limit set consists of all points in $\partial\mathbb{H}^{2}$
that lie in the closure of one (equivalently, all) orbit of $\Gamma$
on $\mathbb{H}^{2}$. Since $\Gamma$ has exponent $\delta>\frac{1}{2}$,
the measure $\mu_{x,y}$ is obtained as the weak limit:
\[
\mu_{x,y}=\lim_{s\rightarrow\delta^{+}}\mu_{s,x,y},
\]
of measures $\mu_{s,x,y}$ on $\overline{\mathbb{H}^{2}}=\mathbb{H}^{2}\cup\partial\mathbb{H}^{2}$
given by
\[
\mu_{s,x,y}=\frac{1}{\sum_{\gamma\in\Gamma}e^{-s\d\left(x,\gamma y\right)}}\sum_{\gamma\in\Gamma}e^{-s\d\left(x,\gamma y\right)}\delta_{\gamma.y},
\]
where $\d$ is the hyperbolic metric on $\mathbb{H}^{2}$. Recall
that $\mathbb{H}^{2}$ carries a transitive action of $\sl_{2}\left(\real\right)$
realized via fractional linear transformations in the upper half plane
model. The stabilizer of $i$ is $K:=\so_{2}\left(\real\right)$.
We then identify $\mathbb{H}^{2}$ with $\nicefrac{\sl_{2}\left(\real\right)}{\so_{2}\left(\real\right)}$.
Let $o$ denote the point stabilized by $K$.

Define:
\begin{align*}
K & =\left\{ k_{\theta}=\begin{pmatrix}\cos\theta & \sin\theta\\
-\sin\theta & \cos\theta
\end{pmatrix}|0\le\theta<2\pi\right\} ,\\
N & =\left\{ \begin{pmatrix}1 & x\\
0 & 1
\end{pmatrix}|x\in\real\right\} \\
A^{+} & =\left\{ a_{t}=\begin{pmatrix}e^{\frac{t}{2}} & 0\\
0 & e^{-\frac{t}{2}}
\end{pmatrix}|t\ge0\right\} .
\end{align*}
Recall the Cartan decomposition $\sl_{2}\left(\real\right)=KA^{+}K$.
Given $g=k_{1}a_{t}k_{2}$ for $g\in\sl_{2}\left(\real\right)$, the
parameter $t$ is uniquely determined, and for $t\neq0$ the elements
$k_{1},k_{2}$ are determined up to multiplication by $\pm I$, where
$I\in\sl_{2}\left(\real\right)$ is the identity.

Let $\rho:\sl_{2}\left(\real\right)\rightarrow\gl\left(V\right)$
be a faithful representation on a finite dimensional real vector space
$V$ . Let
\[
\left(V,\rho\right)=\bigoplus_{i}\left(V_{k_{i}},\rho_{k_{i}}\right)
\]
be the decomposition of $V$ into a direct sum of irreducible representations,
where $\rho_{k_{i}}$ is the irreducible representation of highest
weight $k_{i}$. Let $k$ be the highest weight occurring in $V$,
so that $k=\max_{i}k_{i}$. We denote by $m$ the multiplicity of
$V_{k}$ in $V$. Let $\left\langle .,.\right\rangle :V\rightarrow\real$
be an inner product on $V$ which is $\rho\left(K\right)$-invariant
and such that there exists an orthonormal basis of $V$ with respect
to $\left\langle .,.\right\rangle $ in which $\rho\left(A\right)$
is diagonal. Such an inner product always exists and for any $t\ge0$
any such inner product satisfies (see \ref{sec:K-invariant-norms}
for details)
\[
\norm{\rho\left(a_{t}\right)}_{\mathrm{op}}=\sup_{0\neq v\in V}\frac{\norm{\rho\left(a_{t}\right)v}}{\norm v}=e^{\frac{kt}{2}}.
\]
We fix such an inner product and endow $\endd\left(V\right)$ with
the operator norm, denoted $\norm .:\endd\left(V\right)\rightarrow\real$.

Let $P_{k}:V\rightarrow V$ be the orthogonal projection onto the
subspace of vectors in $V$ of weight $k$. That is, $P_{k}|_{V_{k_{i}}}=0$
if $k_{i}\neq k$, and on each copy of $V_{k}$ in $V$, $P_{k}|_{V_{k}}$
is the projection onto the one dimensional space consisting of highest
weight vectors. For example, if $\rho$ is the standard representation
of $\sl_{2}\left(\real\right)$, which is an irreducible representation
of highest weight 1,
\[
P_{1}=\begin{pmatrix}1 & 0\\
0 & 0
\end{pmatrix}.
\]
Let $\delta>\frac{1}{2}$ and define
\[
V_{\Gamma}=2\pi^{\frac{1}{2}}\frac{\Gamma\left(\delta-\nicefrac{1}{2}\right)}{\Gamma\left(\delta+1\right)},
\]
where $\Gamma\left(s\right)$ for $s\in\c$ on the right hand side
is the value Gamma function at $s$. We are now ready to state our
main result.
\begin{thm}
\label{thm:unnorm}Let $\Gamma,\rho,k$, and $P_{k}$ be as above.
There exists a measure $\mu_{\Gamma}$ on $\endd\left(V\right)$ such
that for any compactly supported continuous function $f:\endd\left(V\right)\rightarrow\c$,
\[
\lim_{T\rightarrow\infty}\frac{1}{T^{\frac{2\delta}{k}}}\sum_{\gamma\in\Gamma}f\left(\frac{\rho\left(\gamma\right)}{T}\right)=\int_{\endd\left(V\right)}f\,\d\mu_{\Gamma}.
\]
That is, the measures
\[
\mu_{\Gamma,T}=\frac{1}{T^{\frac{2\delta}{k}}}\sum_{\gamma\in\Gamma}\delta_{\frac{\rho\left(\gamma\right)}{T}},
\]
converge to $\mu_{\Gamma}$ in the weak-{*} topology of $\left(C_{c}\left(\endd\left(V\right)\right)\right)^{*}$.
Moreover, $\mu_{\Gamma}$ is homogeneous of degree $\frac{2\delta}{k}$
and it is given by:
\[
\mu_{\Gamma}\left(f\right)=\frac{\delta}{2k}V_{\Gamma}\int_{K\times\left[0,\infty\right)\times K}f\left(\rho\left(k_{1}\right)\cdot tP_{k}\cdot\rho\left(k_{2}\right)\right)t^{\frac{2\delta}{k}-1}\,\d\mu\left(k_{1}\right)\d t\d\mu\left(k_{2}\right).
\]
where $\mu$ is a symmetric lift of the Patterson-Sullivan measure
$\mu_{o,o}$ from $\partial\mathbb{H}^{n}$ to $K$ (see remark \ref{rem:symmetric}).
\end{thm}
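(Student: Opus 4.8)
The plan is to exploit the Cartan decomposition together with the sector equidistribution of $\Gamma$-orbits against the Patterson--Sullivan measure. Writing $\gamma=k_{1}a_{t}k_{2}$ and $\rho\left(\gamma\right)=\rho\left(k_{1}\right)\rho\left(a_{t}\right)\rho\left(k_{2}\right)$, and using that the operator norm is $\rho\left(K\right)$-bi-invariant, one has $\norm{\rho\left(\gamma\right)}=e^{kt/2}$, so a lattice point contributes to $\mu_{\Gamma,T}\left(f\right)$ only when $e^{kt/2}\le C_{f}T$, where $C_{f}$ bounds the support of $f$. In particular every contributing $\gamma$ has $t=t\left(\gamma\right)\to\infty$ as $T\to\infty$, apart from an $O\left(1\right)$ number of elements of bounded $t$ whose total contribution is $O\left(T^{-2\delta/k}\right)\to0$. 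For large $t$ the rescaled diagonal part converges exponentially fast, $\rho\left(a_{t}\right)/e^{kt/2}\to P_{k}$. Setting $s\left(\gamma\right)=e^{kt\left(\gamma\right)/2}/T$, this lets me replace $f\left(\rho\left(\gamma\right)/T\right)$ by $f\left(s\left(\gamma\right)\,\rho\left(k_{1}\right)P_{k}\rho\left(k_{2}\right)\right)$ uniformly over the relevant range, at the cost of an error that I will show is negligible after normalization. The problem thus reduces to understanding the joint distribution of the triple $\left(t\left(\gamma\right),k_{1}\left(\gamma\right),k_{2}\left(\gamma\right)\right)$.

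Next I would carry out the annular decomposition indicated in the introduction. Fixing a small $\eta>0$, I partition the admissible range of $t$ into intervals $I_{n}=\left[n\eta,\left(n+1\right)\eta\right)$, so that on each annulus the radial factor $s\left(\gamma\right)$ is constant up to a multiplicative error $e^{O\left(\eta\right)}$ and $f$, being uniformly continuous, varies by at most a modulus-of-continuity term. The contribution of the annulus $I_{n}$ is then, up to these controlled errors,
\[
\frac{1}{T^{2\delta/k}}\sum_{\gamma:\,t\left(\gamma\right)\in I_{n}}f\left(s_{n}\,\rho\left(k_{1}\left(\gamma\right)\right)\,P_{k}\,\rho\left(k_{2}\left(\gamma\right)\right)\right),
\]
where $s_{n}=e^{kn\eta/2}/T$. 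For the inner sum I invoke the sector equidistribution for geometrically finite $\Gamma$ with $\delta>\tfrac{1}{2}$, in the form of \cite[theorem 1.14]{Mohammadi_2015} built on the spectral estimates of \cite[theorem 1.5.]{bourgain2010sector}: as $R\to\infty$, for continuous $\phi,\psi$ on $K$,
\[
\sum_{\gamma:\,t\left(\gamma\right)\le R}\phi\left(k_{1}\left(\gamma\right)\right)\,\psi\left(k_{2}\left(\gamma\right)\right)\sim c\,e^{\delta R}\int_{K}\phi\,\d\mu\int_{K}\psi\,\d\mu,
\]
with $\mu$ the symmetric lift of $\mu_{o,o}$ and $c$ an explicit constant. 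Differencing over $I_{n}$ replaces the cumulative count by $c\,e^{\delta n\eta}\left(e^{\delta\eta}-1\right)\approx c\,\delta\,\eta\,e^{\delta n\eta}$ and produces the joint law $\d\mu\left(k_{1}\right)\,\d\mu\left(k_{2}\right)$ for the angular pair. Substituting and summing over $n$ turns the normalized sum into a Riemann sum in $t$; the change of variables $s=e^{kt/2}/T$ gives $e^{\delta t}=\left(sT\right)^{2\delta/k}$ and $\d t=\tfrac{2}{k}\,\d s/s$, the factors $T^{2\delta/k}$ cancel, and letting $\eta\to0$ after $T\to\infty$ yields
\[
c\,\delta\,\tfrac{2}{k}\int_{K\times\left[0,\infty\right)\times K}f\left(s\,\rho\left(k_{1}\right)\,P_{k}\,\rho\left(k_{2}\right)\right)s^{2\delta/k-1}\,\d\mu\left(k_{1}\right)\,\d s\,\d\mu\left(k_{2}\right),
\]
which is the claimed formula once the constant is identified as $c\,\delta\,\tfrac{2}{k}=\tfrac{\delta}{2k}V_{\Gamma}$, i.e. $c=V_{\Gamma}/4$.

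The main obstacle is making these two limits interchange rigorously: the errors from the $P_{k}$-approximation and from the annular discretization must be summed over the $O\left(\log T\right)$ annuli and shown to be $o\left(T^{2\delta/k}\right)$ \emph{uniformly}, which requires the error term in the equidistribution statement, and its uniformity in the test functions $\phi,\psi$ on $K$ --- this is precisely where the Fourier-coefficient decay from \cite{bourgain2010sector} is essential. Two further technical points need care: the singular behaviour of the Cartan coordinates near $t=0$, handled by noting that the weight $s^{2\delta/k-1}$ is integrable at the origin since $2\delta/k>0$ and that only finitely many $\gamma$ have small $t$; and the $\pm I$ ambiguity in the decomposition $\gamma=k_{1}a_{t}k_{2}$, which is absorbed by working with the symmetric lift $\mu$ of remark \ref{rem:symmetric}. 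Finally, the precise value of $V_{\Gamma}$ is pinned down by matching the leading constant in the Patterson--Sullivan counting asymptotics against the Harish-Chandra--type expansion of the spherical function, which is the source of the Gamma-function factors $\Gamma\left(\delta-\tfrac{1}{2}\right)/\Gamma\left(\delta+1\right)$.
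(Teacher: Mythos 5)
Your proposal is correct and follows essentially the same route as the paper: decompose the sum into radial annuli, replace the rescaled diagonal part $\rho\left(a_{t}\right)/\norm{\rho\left(a_{t}\right)}$ by $P_{k}$, apply the Bourgain--Kontorovich--Sarnak sector asymptotics on each annulus (the paper does this for the harmonics $e^{2in\theta_{1}}e^{2im\theta_{2}}$ and then passes to general continuous test functions via Stone--Weierstrass/Fej\'{e}r approximation, which is exactly the uniformity issue you flag), and recognize the resulting sum as a Riemann sum for the $t^{2\delta/k-1}\,\d t$ integral. The only cosmetic differences are that you partition uniformly in the Cartan parameter $t$ (geometric annuli in the norm) while the paper partitions uniformly in $\norm{\rho\left(\gamma\right)}$ with a fixed number $N$ of annuli, which lets it take the clean double limit $T\rightarrow\infty$ then $N\rightarrow\infty$; your constant identification $c=V_{\Gamma}/4$ matches the paper's Corollary \ref{cor:Annuli}.
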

\begin{rem}
\label{rem:symmetric}We parametrize $K=\so\left(2,\real\right)$
by $\left[0,2\pi\right]$ via
\[
K=\left\{ k_{\theta}=\begin{pmatrix}\cos\theta & \sin\theta\\
-\sin\theta & \cos\theta
\end{pmatrix}|0\le\theta\le2\pi\right\} .
\]
This allows us to identify $\partial\mathbb{H}^{2}$ with $\left[0,\pi\right]$.
This identification is consistent with the Cartan coordinates on $\sl_{2}\left(\real\right)$,
since an elliptic transformation $k_{\theta}$ acts as a rotation
by $2\theta$ on hyperbolic space. The Patterson-Sullivan measure
is constructed for discrete subgroups of $\psl_{2}\left(\real\right)$.
For $g\in\psl_{2}\left(\real\right)$ we define the angles $\theta_{1}\left(g\right),\theta_{2}\left(g\right)\in\left[0,\pi\right]$
to be such that $g=\pm k_{\theta_{1}\left(g\right)}a_{t}k_{\theta_{2}\left(g\right)}$
for some $t\ge0$. Then the Patterson-Sullivan measure $\mu_{o,o}$
can be identified with a measure on $\left[0,\pi\right]$. We extend
$\mu_{o,o}$ to $\left[0,2\pi\right]$ by setting it to be symmetric
with respect to $\theta\mapsto\theta+\pi$. We denote the extended
measure by $\mu$. This is consistent with the notation of Bourgain,
Kontorovich, and Sarnak in \cite{bourgain2010sector}. In particular,
if $\Gamma$ is a lattice then in this convention the Patterson-Sullivan
measure on $K$ has total mass $2$ and is proportional to the Lebesgue
measure on $\left[0,2\pi\right]$.
\end{rem}
\begin{rem}
The limit measure of the theorem was originally considered by considered by Mohammadi and Oh \cite[Definition 1.8.]{Mohammadi_2015}.
\end{rem}
We can also state a normalized version of the main theorem, which
can be more useful in certain situations. It is easy to see that the
two versions are equivalent.
\begin{thm}
\label{thm:normalized}Let $\Gamma,\rho,P_{k}$ be as in \ref{thm:unnorm}.
Let $\norm .:V\rightarrow\real$ be the norm on $V$ fixed above.
Define:
\[
\Gamma_{T}=\left\{ \gamma\in\Gamma|\norm{\rho\left(\gamma\right)}\le T\right\} .
\]
There exists a measure $\nu_{\Gamma}$ on $\endd\left(V\right)$ such
that for any continuous $f:\endd\left(V\right)\rightarrow\c$,
\[
\lim_{T\rightarrow\infty}\frac{1}{T^{\frac{2\delta}{k}}}\sum_{\gamma\in\Gamma_{T}}f\left(\frac{\rho\left(\gamma\right)}{T}\right)=\int_{\endd\left(V\right)}f\,\d\nu_{\Gamma}.
\]
Moreover, $\nu_{\Gamma}$ is given by:
\[
\nu_{\Gamma}\left(f\right)=\frac{\delta}{2k}V_{\Gamma}\int_{K\times\left[0,1\right]\times K}f\left(\rho\left(k_{1}\right)\cdot tP_{k}\cdot\rho\left(k_{2}\right)\right)t^{\frac{2\delta}{k}-1}\,\d\mu\left(k_{1}\right)\d t\d\mu\left(k_{2}\right).
\]
\end{thm}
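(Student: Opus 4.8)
The plan is to deduce Theorem \ref{thm:normalized} from Theorem \ref{thm:unnorm} by recognizing the restricted sum as the integral of $f$ against $\mu_{\Gamma,T}$ cut off to the closed unit ball. Write $B=\{A\in\endd(V):\norm A\le1\}$ and $S=\{A\in\endd(V):\norm A=1\}$. Since $\gamma\in\Gamma_T$ is equivalent to $\norm{\rho(\gamma)/T}\le1$, we have
\[
\frac{1}{T^{\frac{2\delta}{k}}}\sum_{\gamma\in\Gamma_T}f\left(\frac{\rho(\gamma)}{T}\right)=\mu_{\Gamma,T}\left(f\cdot\mathbf{1}_{B}\right).
\]
Because the right-hand side sees only the values of $f$ on the compact set $B$, I would first multiply $f$ by a fixed continuous cutoff equal to $1$ on $B$ with compact support; this reduces the statement to $f\in C_c(\endd(V))$, and it simultaneously shows that $\nu_\Gamma$, once constructed, is supported in $B$, so that $\nu_\Gamma(f)=\nu_\Gamma(f\mathbf{1}_{B})$ and the left-hand sum is unchanged.

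The key structural observation is that the model point appearing in the formula for $\mu_\Gamma$ has operator norm exactly $t$. Indeed $\rho(k_1)$ and $\rho(k_2)$ are orthogonal for the chosen inner product, hence isometries, so that $\norm{\rho(k_1)\cdot tP_k\cdot\rho(k_2)}=t\norm{P_k}=t$, where $\norm{P_k}=1$ since $P_k$ is a nonzero orthogonal projection. Consequently $S$ corresponds to the single radius $t=1$, which is null for the radial density $t^{\frac{2\delta}{k}-1}\,\d t$, and therefore $\mu_\Gamma(S)=0$. Moreover, restricting the integral in the formula for $\mu_\Gamma$ to $B$ amounts to restricting $t$ to $[0,1]$, which is precisely the formula claimed for $\nu_\Gamma$; thus I define $\nu_\Gamma=\mu_\Gamma|_{B}$, and the only remaining task is to pass the cutoff through the weak limit.

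The main (but standard) difficulty is upgrading the convergence of Theorem \ref{thm:unnorm}, which tests against continuous compactly supported functions, to convergence against the discontinuous integrand $f\mathbf{1}_{B}$. For this I would sandwich $\mathbf{1}_{B}$ between continuous compactly supported $\phi^-\le\mathbf{1}_{B}\le\phi^+$ with $0\le\phi^{\pm}\le1$, with $\phi^-=1$ on $B_{1-\varepsilon}$ and $\phi^+=0$ outside $B_{1+\varepsilon}$, so that $\mu_\Gamma(\phi^+-\phi^-)\le\mu_\Gamma\big(B_{1+\varepsilon}\setminus B_{1-\varepsilon}\big)\to\mu_\Gamma(S)=0$ as $\varepsilon\to0$. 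Letting $M$ bound $|f|$ on a fixed compact neighbourhood of $B$, the elementary estimate $0\le\mathbf{1}_{B}-\phi^-\le\phi^+-\phi^-$ gives
\[
\bigl|\mu_{\Gamma,T}(f\mathbf{1}_{B})-\mu_{\Gamma,T}(f\phi^-)\bigr|\le M\,\mu_{\Gamma,T}(\phi^+-\phi^-).
\]
Since $f\phi^-$ and $\phi^+-\phi^-$ lie in $C_c(\endd(V))$, Theorem \ref{thm:unnorm} lets me replace $\mu_{\Gamma,T}$ by $\mu_\Gamma$ in the limit; taking $T\to\infty$ and then $\varepsilon\to0$ forces $\mu_{\Gamma,T}(f\mathbf{1}_{B})\to\mu_\Gamma(f\mathbf{1}_{B})=\nu_\Gamma(f)$, which is the assertion. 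The entire obstacle is therefore confined to controlling the mass near the sphere $S$, and it dissolves once $\mu_\Gamma(S)=0$ is read off from the explicit radial density.
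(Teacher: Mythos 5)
Your deduction of Theorem \ref{thm:normalized} from Theorem \ref{thm:unnorm} is correct in all of its steps: the identification of the restricted sum with $\mu_{\Gamma,T}\left(f\mathbf{1}_{B}\right)$, the computation $\norm{\rho\left(k_{1}\right)\cdot tP_{k}\cdot\rho\left(k_{2}\right)}=t$ (using that the $\rho\left(k_{i}\right)$ are orthogonal for the $\rho$-standard inner product and that $\norm{P_{k}}=1$), the resulting observation that $\mu_{\Gamma}$ gives no mass to the unit sphere because $\left\{ t=1\right\} $ is null for $t^{\frac{2\delta}{k}-1}\,\d t$, and the sandwich argument with $\phi^{\pm}$ are all sound. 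What you have written out carefully is exactly the content of the remark in section \ref{sec:The-main-results} that ``the two versions are equivalent.''

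The difficulty is that the argument is circular relative to the paper's logical structure, so it does not constitute a proof of the theorem. The paper never proves Theorem \ref{thm:unnorm} independently: after the reduction to $\rho\left(-1\right)$-even functions it states explicitly that it suffices to prove Theorem \ref{thm:normalized}, and Theorem \ref{thm:unnorm} is then recovered from it (a compactly supported $f$ vanishes outside some ball of radius $R$, so the full sum over $\Gamma$ equals a sum over $\Gamma_{RT}$, to which the normalized statement applies after rescaling). All of the substantive analysis --- the partition of $\nu_{T}\left(f\right)$ into the annuli $S_{T,j}$, the normalized annulus averages $\nu_{T,N,j}$, the passage to the boundary value $f\left(\rho\left(k_{1}\right)\cdot aP_{k}\cdot\rho\left(k_{2}\right)\right)$ via Lemma \ref{lem:extension-of-h}, and above all the input of the Bourgain--Kontorovich--Sarnak sector estimates (Theorem \ref{thm:BKS}) through Proposition \ref{prop:annuli_continuous} --- lives inside the proof of Theorem \ref{thm:normalized} itself. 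By taking Theorem \ref{thm:unnorm} as given you have assumed the hard part; to turn your proposal into a complete proof you would have to supply an independent argument for Theorem \ref{thm:unnorm}, which would necessarily reproduce essentially the same annuli-plus-sector-estimates machinery that the paper deploys to prove the normalized statement directly.
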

A quantitative estimate for the error term in the theorems above can
be given for H\"{o}lder continuous functions. Let
\[
0\le\lambda_{0}<\lambda_{1}<\cdots<\lambda_{N}<\frac{1}{4}
\]
be the eigenvalues of the Laplacian on $\Gamma\backslash\mathbb{H}$
below $\frac{1}{4}$ (see section \ref{sec:Sector-estimates}). Write
\[
\lambda_{j}=s_{j}\left(1-s_{j}\right)
\]
with $s_{j}>\frac{1}{2}$. 
\begin{thm}
\label{thm:Main_Quant}(same assumptions as theorems \ref{thm:unnorm}
and \ref{thm:normalized}) Let $f:\endd\left(V\right)\rightarrow\c$
be H\"{o}lder continuous with exponent $\alpha\in\left(0,1\right]$ and
constant $\norm f_{\mathrm{Lip}\alpha}$ .
\begin{enumerate}
\item Assume $f$ has compact support. Then, as $T$ tends to infinity,
\[
\frac{1}{T^{\frac{2\delta}{k}}}\sum_{\gamma\in\Gamma}f\left(\frac{\rho\left(\gamma\right)}{T}\right)=\int_{\endd\left(V\right)}f\,\d\mu_{\Gamma}+\o\left(\left(\norm f_{\infty}+\norm f_{\mathrm{Lip}\alpha}\right)\left(T^{\frac{\alpha}{2k}\left(s_{1}-\delta\right)}+T^{\frac{\alpha}{16k}\left(1-2\delta\right)}\right)\log\left(T\right)\right),
\]
where $\norm f_{\infty}=\sup_{x\in V}\left|f\left(x\right)\right|$,
and the implied constants depend only on $\Gamma$, $\rho$ and the
norm on $V$.
\item If $f$ is not necessarily of compact support, then, as $T\rightarrow\infty$
\[
\frac{1}{T^{\frac{2\delta}{k}}}\sum_{\gamma\in\Gamma_{T}}f\left(\frac{\rho\left(\gamma\right)}{T}\right)=\int_{\endd\left(V\right)}f\,\d\nu_{\Gamma}+\o\left(\left(\norm f_{\infty,1}+\norm f_{\mathrm{Lip}\alpha}\right)\left(T^{\frac{\alpha}{2k}\left(s_{1}-\delta\right)}+T^{\frac{\alpha}{16k}\left(1-2\delta\right)}\right)\log\left(T\right)\right),
\]
 where $\norm f_{\infty,1}:=\sup_{\norm x\le1}\left|f\left(x\right)\right|$. 
\end{enumerate}
\end{thm}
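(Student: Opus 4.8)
The plan is to make effective the annular decomposition already underlying Theorems \ref{thm:unnorm} and \ref{thm:normalized}, replacing the qualitative equidistribution of angular components by the quantitative sector estimate of Bourgain, Kontorovich and Sarnak \cite[theorem 1.5.]{bourgain2010sector} (equivalently \cite[theorem 1.14]{Mohammadi_2015}). First I would pass to Cartan coordinates: writing $\gamma=\pm k_{\theta_{1}\left(\gamma\right)}a_{t}k_{\theta_{2}\left(\gamma\right)}$ with $\norm{\rho\left(a_{t}\right)}=e^{kt/2}$, the fact that $\rho\left(a_{t}\right)$ is diagonal in the chosen basis gives $\rho\left(a_{t}\right)=e^{kt/2}\left(P_{k}+E_{t}\right)$, where $\norm{E_{t}}=\o\left(e^{-t}\right)$, the gap $e^{-t}$ coming from the next-to-highest weight $k-2$. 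Hence
\[
\frac{\rho\left(\gamma\right)}{T}=\rho\left(k_{\theta_{1}\left(\gamma\right)}\right)\cdot\frac{e^{kt/2}}{T}\left(P_{k}+E_{t}\right)\cdot\rho\left(k_{\theta_{2}\left(\gamma\right)}\right).
\]
Since $\norm{\rho\left(\gamma\right)}=\norm{\rho\left(a_{t}\right)}=e^{kt/2}$, both in the compactly supported case (part (1)) and after restricting to $\Gamma_{T}$ (part (2)) only $\gamma$ with $r:=e^{kt/2}/T$ bounded contribute, so $t$ ranges over an interval of length $\o\left(\log T\right)$ about $\tfrac{2}{k}\log T$.

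Second, I would use H\"older continuity to discard $E_{t}$. Replacing the argument above by $\rho\left(k_{\theta_{1}\left(\gamma\right)}\right)\cdot rP_{k}\cdot\rho\left(k_{\theta_{2}\left(\gamma\right)}\right)$ costs $\o\left(\norm f_{\mathrm{Lip}\alpha}\,e^{-\alpha t}\right)$ per term; summed against the normalization $T^{-2\delta/k}$, with $\sim e^{\delta t}\sim T^{2\delta/k}$ points per unit window, this contributes $\o\left(\norm f_{\mathrm{Lip}\alpha}T^{-2\alpha/k}\log T\right)$, which is negligible against the claimed errors because $s_{1}-\delta>-1$ and $1-2\delta>-1$. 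This reduces the statement to estimating $\sum_{\gamma}F\left(\theta_{1}\left(\gamma\right),r,\theta_{2}\left(\gamma\right)\right)$ with $F\left(u,r,v\right)=f\left(\rho\left(k_{u}\right)\,rP_{k}\,\rho\left(k_{v}\right)\right)$.

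Third, I would insert the sector estimate. Decomposing the radial variable into thin annuli and freezing $r$ on each, the inner sum over the angular components of those $\gamma$ with $d\left(o,\gamma o\right)$ in a window equals the integral of $F\left(\cdot,r,\cdot\right)$ against $e^{\delta t}\,\d t$ times the symmetric Patterson-Sullivan measure $\mu\otimes\mu$, up to an error governed by the spectral data: a term of size $e^{s_{1}t}$ from the eigenvalue $\lambda_{1}=s_{1}\left(1-s_{1}\right)$ and a term of size $e^{t/2}$ from the continuous spectrum at the bottom $\tfrac{1}{4}$. Summing the main terms over the annuli and performing the change of variables $r=e^{kt/2}/T$, under which
\[
e^{\delta t}\,\d t=\frac{2}{k}\,T^{\frac{2\delta}{k}}\,r^{\frac{2\delta}{k}-1}\,\d r,
\]
reproduces after the normalization exactly the measure $\mu_{\Gamma}$ (respectively $\nu_{\Gamma}$, where $\norm{\rho\left(\gamma\right)}\le T$ confines $r$ to $\left[0,1\right]$) with the constant $\tfrac{\delta}{2k}V_{\Gamma}$ of Theorems \ref{thm:unnorm} and \ref{thm:normalized}.

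Finally I would assemble the error. The sector estimate requires smooth angular test functions, whereas $F$ is only H\"older; I would therefore mollify $F$ in the angular variables at a scale $\varepsilon$, apply \cite[theorem 1.5.]{bourgain2010sector} to the smoothed function, and bound the difference by $\o\left(\norm f_{\mathrm{Lip}\alpha}\varepsilon^{\alpha}\right)$, while the spectral error for the smoothed function picks up a fixed Sobolev norm, that is, a negative power of $\varepsilon$. The main obstacle is precisely this balancing: optimizing $\varepsilon$ between the H\"older gain $\varepsilon^{\alpha}$ and the spectral gains $e^{\left(s_{1}-\delta\right)t}$ and $e^{\left(\frac{1}{2}-\delta\right)t}$, uniformly over the $\o\left(\log T\right)$ annuli, and then converting via $t\sim\tfrac{2}{k}\log T$ into powers of $T$. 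Carrying this out yields the discrete-spectrum exponent $\tfrac{\alpha}{2k}\left(s_{1}-\delta\right)$ and the continuous-spectrum exponent $\tfrac{\alpha}{16k}\left(1-2\delta\right)$, the worse $\tfrac{1}{16}$ loss in the latter reflecting the larger Sobolev cost incurred in controlling the continuous part, with the number of annuli accounting for the factor $\log T$.
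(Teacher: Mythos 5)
Your overall strategy --- a radial decomposition, the H\"older condition to replace $\rho\left(a_{t}\right)/\norm{\rho\left(a_{t}\right)}$ by $P_{k}$, mollification of the angular part, the Bourgain--Kontorovich--Sarnak sector estimate applied mode by mode, and an optimization of the mollification scale --- is the same as the paper's, and you land on the correct exponents. But there is a genuine gap in the radial step. You take $\o\left(\log T\right)$ annuli, i.e.\ windows of bounded width in the Cartan parameter $t$, and ``freeze $r$ on each''. On such a window $r=e^{kt/2}/T$ varies by the fixed multiplicative factor $e^{k/2}$, so for $r\asymp1$ the oscillation of $F\left(u,r,v\right)=f\left(\rho\left(k_{u}\right)rP_{k}\rho\left(k_{v}\right)\right)$ across a single annulus is of size $\norm f_{\mathrm{Lip}\alpha}$, comparable to the main term: freezing $r$ at this resolution destroys the asymptotic itself, not just the error term. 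The paper instead uses $N$ annuli equally spaced in the norm variable, $\frac{j}{N}T<\norm{\rho\left(\gamma\right)}\le\frac{j+1}{N}T$, so that the freezing error is $\o\left(\norm f_{\mathrm{Lip}\alpha}N^{-\alpha}\right)$, and ultimately takes $N=T^{1/2}$. Consequently your claim that ``the number of annuli accounts for the factor $\log T$'' cannot be right: with polynomially many annuli one must normalize each annulus sum by its expected mass $M_{T,j}$, realize it as a difference of two ball counts in order to apply the sector estimate, and verify that the weighted sum of the relative errors over $j$ stays bounded (the paper does this via the integrability of $t^{\frac{2s_{1}}{k}-1}$ and $t^{\frac{1+6\delta}{4k}-1}$ on $\left[0,1\right]$). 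The $\log T$ in the final bound in fact comes from the angular approximation error ($R^{-\alpha}\log R$ for the Fej\'er means) together with the $\log\left(T\right)^{\frac{1}{4}}$ already present in the sector estimate, and it is there even for a single annulus.

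Two smaller points. The spectral gap in the singular values of $\rho\left(a_{t}\right)$ below $e^{kt/2}$ is $e^{-t/2}$, not $e^{-t}$, whenever an irreducible component of highest weight $k-1$ occurs in $V$; the relevant competitor is the weight $k-1$ from a lower component, not $k-2$ inside $V_{k}$. This only affects a negligible error term, but as stated your bound on $\norm{E_{t}}$ is wrong for general $V$. You also do not address the passage between $\sl_{2}\left(\real\right)$ and $\psl_{2}\left(\real\right)$ (the reduction to functions invariant under $\rho\left(-1\right)$), which is needed before the Patterson--Sullivan measure and the sector estimate, both formulated for Fuchsian groups in $\psl_{2}\left(\real\right)$, can be invoked; this is routine but must be said.
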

One application of the main theorem involves the study of the distribution
of individual matrix elements of the group $\Gamma$. For example,
let $\rho:\sl_{2}\left(\real\right)\rightarrow\gl_{2}\left(\real\right)$
be the standard representation and let $\norm .$ denote the operator
norm induced by the Euclidean norm on $\real^{2}$. This norm is given
by
\[
\norm{\begin{pmatrix}a & b\\
c & d
\end{pmatrix}}=\max\left\{ \left|\alpha\right|,\left|\beta\right|\right\} ,
\]
where
\[
\begin{pmatrix}a & b\\
c & d
\end{pmatrix}=k_{\theta}\begin{pmatrix}\alpha & 0\\
0 & \beta
\end{pmatrix}k_{\phi}
\]
for some $\theta,\phi\in\left[0,2\pi\right]$. Let $\Gamma_{T}$ denote
the set of matrices in $\Gamma$ with norm smaller than $T$, where
we identify $\sl_{2}\left(\real\right)$ with its image in $\gl_{2}\left(\real\right)$.
For $T>0$, consider the set of values 
\[
E_{T}=\left\{ \frac{a}{T}\,|\,\begin{pmatrix}a & b\\
c & d
\end{pmatrix}\in\Gamma_{T}\right\} .
\]
$E_{T}$ is a subset of the interval $\left[-1.1\right]$, and one
can ask whether this set of values is equidistributed in $\left[-1,1\right]$,
i.e., whether the limit
\[
\lim_{T\rightarrow\infty}\frac{1}{\left|\Gamma_{T}\right|}\sum_{r\in E_{T}}f\left(r\right)=\lim_{T\rightarrow\infty}\frac{1}{\left|\Gamma_{T}\right|}\sum_{\gamma\in\Gamma_{T}}f\left(\frac{a}{T}\right),
\]
exists for any continuous function $f:\left[-1,1\right]\rightarrow\c$. 
\begin{thm}
For any continuous function $f:\left[-1,1\right]\rightarrow\c$
\[
\lim_{T\rightarrow\infty}\frac{1}{\left|\Gamma_{T}\right|}\sum_{\gamma\in\Gamma_{T}}f\left(\frac{a}{T}\right)=\frac{\delta}{2}\int_{\left[0,2\pi\right]\times\left[0,1\right]\times\left[0,2\pi\right]}f\left(t\cos\theta_{1}\cos\theta_{2}\right)t^{2\delta-1}\,\d\mu\left(\theta_{1}\right)\d t\d\mu\left(\theta_{2}\right).
\]
\end{thm}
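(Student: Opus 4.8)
The plan is to deduce this from Theorem \ref{thm:normalized} applied to the standard representation $\rho\colon\sl_{2}\left(\real\right)\to\gl\left(\real^{2}\right)$. This representation is irreducible of highest weight $k=1$, so the scaling exponent is $\frac{2\delta}{k}=2\delta$ and the relevant projection is $P_{k}=P_{1}=\begin{pmatrix}1&0\\0&0\end{pmatrix}$. The fixed $\rho\left(K\right)$-invariant inner product is the standard Euclidean one, for which the operator norm on $\endd\left(\real^{2}\right)$ is exactly the norm $\max\left\{ |\alpha|,|\beta|\right\}$ described in the statement, so the set $\Gamma_{T}$ of the corollary coincides with the one in Theorem \ref{thm:normalized}. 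The first step is to rewrite the averaged quantity in terms of a single continuous function on $\endd\left(V\right)$. Since $f$ is only given on $\left[-1,1\right]$, I would fix a bounded continuous extension $\tilde f\colon\real\to\c$ (e.g.\ $\tilde f\left(x\right)=f\left(\max\left\{ -1,\min\left\{ 1,x\right\} \right\} \right)$) and set $F\colon\endd\left(V\right)\to\c$, $F\left(M\right)=\tilde f\left(M_{11}\right)$, where $M_{11}$ is the $\left(1,1\right)$ entry. Then $F$ is continuous and bounded, and because $|M_{11}|\le\norm{M}$ we have $\left(\rho\left(\gamma\right)/T\right)_{11}\in\left[-1,1\right]$ for every $\gamma\in\Gamma_{T}$, so $F\left(\rho\left(\gamma\right)/T\right)=f\left(a/T\right)$ on the range that actually occurs. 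Note that $F$ is \emph{not} compactly supported, which is precisely why the normalized Theorem \ref{thm:normalized}, rather than Theorem \ref{thm:unnorm}, is the right tool.

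Next I would write the average over $\Gamma_{T}$ as a ratio of two quantities controlled by Theorem \ref{thm:normalized}:
\[
\frac{1}{\left|\Gamma_{T}\right|}\sum_{\gamma\in\Gamma_{T}}f\left(\frac{a}{T}\right)=\frac{T^{-2\delta}\sum_{\gamma\in\Gamma_{T}}F\left(\rho\left(\gamma\right)/T\right)}{T^{-2\delta}\left|\Gamma_{T}\right|}.
\]
Applying Theorem \ref{thm:normalized} to $F$ shows the numerator converges to $\nu_{\Gamma}\left(F\right)$, and applying it to the constant function $1$ shows the denominator converges to $\nu_{\Gamma}\left(\mathbf{1}\right)$, i.e.\ $\left|\Gamma_{T}\right|/T^{2\delta}\to\nu_{\Gamma}\left(\mathbf{1}\right)$. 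Since $\nu_{\Gamma}\left(\mathbf{1}\right)>0$, the ratio converges to $\nu_{\Gamma}\left(F\right)/\nu_{\Gamma}\left(\mathbf{1}\right)$.

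It then remains to evaluate the two integrals using the explicit formula of Theorem \ref{thm:normalized}. A direct computation in the Cartan parametrization, with $\rho\left(k_{\theta}\right)=\begin{pmatrix}\cos\theta&\sin\theta\\-\sin\theta&\cos\theta\end{pmatrix}$, gives $\bigl(\rho\left(k_{\theta_{1}}\right)\,tP_{1}\,\rho\left(k_{\theta_{2}}\right)\bigr)_{11}=t\cos\theta_{1}\cos\theta_{2}$, so that
\[
\nu_{\Gamma}\left(F\right)=\frac{\delta}{2}V_{\Gamma}\int_{K\times\left[0,1\right]\times K}f\left(t\cos\theta_{1}\cos\theta_{2}\right)t^{2\delta-1}\,\d\mu\left(\theta_{1}\right)\d t\,\d\mu\left(\theta_{2}\right).
\]
Taking $f\equiv1$ and using $\int_{0}^{1}t^{2\delta-1}\,\d t=\frac{1}{2\delta}$ together with the total mass $\mu\left(K\right)=2$ from Remark \ref{rem:symmetric} yields $\nu_{\Gamma}\left(\mathbf{1}\right)=\frac{\delta}{2}V_{\Gamma}\cdot\mu\left(K\right)^{2}\cdot\frac{1}{2\delta}=V_{\Gamma}$. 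Dividing, the normalization factor $V_{\Gamma}$ cancels and I obtain the asserted identity with leading constant $\frac{\delta}{2}$.

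There is no deep analytic obstacle here: the substance lies entirely in the two applications of Theorem \ref{thm:normalized}, and the only point requiring genuine care is the bookkeeping of constants. Specifically, one must verify that $V_{\Gamma}$ cancels between numerator and denominator and that the total mass $\mu\left(K\right)=2$ of the symmetric lift of the Patterson--Sullivan measure---which is a probability measure on $\partial\mathbb{H}^{2}$---is used correctly to pin down the constant $\frac{\delta}{2}$. The extension of $f$ off $\left[-1,1\right]$ is harmless since all arguments $t\cos\theta_{1}\cos\theta_{2}$ appearing in the limit, and all entries $a/T$ appearing in the sum over $\Gamma_{T}$, lie in $\left[-1,1\right]$.
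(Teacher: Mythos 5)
Your proposal is correct and follows essentially the same route as the paper: apply Theorem \ref{thm:normalized} to the function $M\mapsto f(M_{11})$ for the standard representation, compute $\bigl(\rho(k_{\theta_1})\,tP_1\,\rho(k_{\theta_2})\bigr)_{11}=t\cos\theta_1\cos\theta_2$, and divide by $|\Gamma_T|\sim V_\Gamma T^{2\delta}$ so that $V_\Gamma$ cancels. The only differences are cosmetic: you extend $f$ off $[-1,1]$ explicitly (a point the paper glosses over) and obtain the denominator asymptotic by applying the theorem to the constant function $\mathbf{1}$ together with $\mu(K)=2$, whereas the paper quotes the Lax--Phillips count directly; both give the same constant.
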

\begin{proof}
Applying theorem \ref{thm:normalized} to the function 
\[
\begin{pmatrix}a & b\\
c & d
\end{pmatrix}\mapsto f\left(a\right)
\]
on $\endd\left(\real^{2}\right)$, we see that for any continuous
$f$ as above,
\[
\lim_{T\rightarrow\infty}\frac{1}{T^{2\delta}}\sum_{\gamma\in\Gamma_{T}}f\left(\frac{a}{T}\right)=\frac{\delta}{2}V_{\Gamma}\int_{C}f\left(k_{\theta_{1}}\begin{pmatrix}t & 0\\
0 & 0
\end{pmatrix}k_{\theta_{2}}\right)t^{2\delta-1}\,\d\mu\left(\theta_{1}\right)\d t\d\mu\left(\theta_{2}\right),
\]
where $C=\left[0,2\pi\right]\times\left[0,1\right]\times\left[0,2\pi\right]$.
Since the matrix elements are given by
\begin{align*}
k_{\theta_{1}}\begin{pmatrix}t & 0\\
0 & 0
\end{pmatrix}k_{\theta_{2}} & =\begin{pmatrix}t\cos\theta_{1}\cos\theta_{2} & t\cos\theta_{1}\sin\theta_{2}\\
-t\sin\theta_{1}\cos\theta_{2} & -t\sin\theta_{1}\sin\theta_{2}
\end{pmatrix},
\end{align*}
and the size of $\left|\Gamma_{T}\right|$ by 
\[
\lim_{T\rightarrow\infty}\frac{\left|\Gamma_{T}\right|}{T^{2\delta}}=V_{\Gamma},
\]
We get,
\[
\lim_{T\rightarrow\infty}\frac{1}{\left|\Gamma_{T}\right|}\sum_{\gamma\in\Gamma_{T}}f\left(\frac{a}{T}\right)=\frac{\delta}{2}\int_{\left[0,2\pi\right]\times\left[0,1\right]\times\left[0,2\pi\right]}f\left(t\cos\theta_{1}\cos\theta_{2}\right)t^{2\delta-1}\,\d\mu\left(\theta_{1}\right)\d t\d\mu\left(\theta_{2}\right).
\]
\end{proof}
Hence, $\left\{ E_{T}\right\} $ is equidistributed in $\left[-1,1\right]$
with respect to the measure induced by the projection from the ``
limit cone''
\[
C\left(\Gamma,1\right)=\left\{ k_{\theta_{1}}\begin{pmatrix}t & 0\\
0 & 0
\end{pmatrix}k_{\theta_{2}}|\theta_{1},\theta_{2}\in\left[0,2\pi\right],0\le t\le1\right\} 
\]
onto the $a$-axis. If $\Gamma$ happens to be a lattice, the distribution
is given by:
\[
\lim_{T\rightarrow\infty}\frac{1}{\left|\Gamma_{T}\right|}\sum_{\gamma\in\Gamma_{T}}f\left(\frac{a}{T}\right)=2\cdot\int_{\left[0,2\pi\right]\times\left[0,1\right]\times\left[0,2\pi\right]}f\left(t\cos\theta_{1}\cos\theta_{2}\right)t\,\d\theta_{1}\d t\d\theta_{2}.
\]

\section{Preliminaries}

This section reviews some of the results needed in the proof of the
main theorem and sets up notation. We give a brief review of the sector
estimates results of Bourgain, Kontorovich and Sarnak from \cite{bourgain2010sector}
and recall some basic facts about $\sl_{2}\left(\real\right)$ and
its finite dimensional representations.

\subsection{\label{sec:Basic-structure-and-rep=00003DSL2}Finite dimensional
representations of $\protect\sl_{2}\left(\protect\real\right)$}

For proofs of the facts stated here, see chapter II of \cite{knapp2016representation}.

Recall that we have defined:
\begin{align*}
K & =\left\{ k_{\theta}=\begin{pmatrix}\cos\theta & \sin\theta\\
-\sin\theta & \cos\theta
\end{pmatrix}|0\le\theta<2\pi\right\} ,\\
N & =\left\{ \begin{pmatrix}1 & x\\
0 & 1
\end{pmatrix}|x\in\real\right\} \\
A^{+} & =\left\{ a_{t}=\begin{pmatrix}e^{\frac{t}{2}} & 0\\
0 & e^{-\frac{t}{2}}
\end{pmatrix}|t\ge0\right\} .
\end{align*}
We recall the Cartan decomposition for the particular case of $\sl_{2}\left(\real\right)$.
\begin{thm*}
\label{(Cartan-decomposition)}($KAK$ decomposition) The map $K\times A^{+}\times K\rightarrow\sl_{2}\left(\real\right)$
given by $\left(k_{\theta_{1}},a_{t},k_{\theta_{2}}\right)\mapsto k_{\theta_{1}}a_{t}k_{\theta_{2}}$
is surjective. If $t\neq0$, $k_{1}$ is uniquely determined up to
multiplication on the right by $\pm I$.
\end{thm*}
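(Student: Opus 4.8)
\emph{Strategy.} The plan is to reduce the statement to elementary $2\times2$ linear algebra, essentially the singular value decomposition of an element of $\sl_{2}\left(\real\right)$. The one genuine subtlety, and the step I would be most careful about, is determinant bookkeeping: the spectral and polar constructions naturally produce orthogonal factors lying only in $O\left(2\right)$, and the real work is to check that the signs can be absorbed so that every rotation factor lands in $K=\so\left(2,\real\right)$ while simultaneously keeping $t\ge0$. Once that is handled, both surjectivity and the uniqueness claim follow from short explicit computations.

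\emph{Surjectivity.} I would start from an arbitrary $g\in\sl_{2}\left(\real\right)$ and consider the symmetric matrix $s=g^{T}g$, which is positive definite with $\det s=1$; hence its eigenvalues form a reciprocal pair $e^{t},e^{-t}$ with $t\ge0$, and $e^{t/2}=\norm g_{\mathrm{op}}$ is the larger singular value of $g$. The spectral theorem supplies an orthonormal eigenbasis, whose change-of-basis matrix lies in $O\left(2\right)$; if its determinant is $-1$ I negate one eigenvector, which preserves orthonormality and the eigenvalue ordering, obtaining $k_{2}\in K$ with $k_{2}\,s\,k_{2}^{-1}=a_{t}^{2}$. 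Setting $k_{1}=g\,k_{2}^{-1}a_{t}^{-1}$, one computes $k_{1}^{T}k_{1}=a_{t}^{-1}k_{2}\,s\,k_{2}^{-1}a_{t}^{-1}=I$, so $k_{1}\in O\left(2\right)$; and since $\det k_{1}=\det g\cdot\det k_{2}^{-1}\cdot\det a_{t}^{-1}=1$, in fact $k_{1}\in K$. By construction $g=k_{1}a_{t}k_{2}$ with $t\ge0$, which proves surjectivity.

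\emph{Uniqueness.} Suppose $k_{\theta_{1}}a_{t}k_{\theta_{2}}=k_{\theta_{1}'}a_{t'}k_{\theta_{2}'}$ with $t,t'\ge0$. Since the $k_{\theta}$ are orthogonal and preserve singular values, the largest singular value of each side equals the operator norm of $g$, namely $e^{t/2}$ and $e^{t'/2}$ respectively; hence $t=t'$. Writing $u=k_{\theta_{1}'}^{-1}k_{\theta_{1}}$ and $w=k_{\theta_{2}'}k_{\theta_{2}}^{-1}$ in $K$, the identity rearranges to $u=a_{t}\,w\,a_{t}^{-1}$. A direct $2\times2$ computation shows that conjugating $w=k_{\phi}$ by $a_{t}$ leaves the diagonal unchanged but scales its two off-diagonal entries by $e^{t}$ and $e^{-t}$; for the result to remain an orthogonal matrix in $K$ these off-diagonal entries must have equal absolute value, forcing $\left(e^{t}-e^{-t}\right)\sin\phi=0$. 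When $t\neq0$ this gives $\sin\phi=0$, so $w=\pm I$, and then $u=a_{t}\,w\,a_{t}^{-1}=w=\pm I$ because $\pm I$ is central. Consequently $k_{\theta_{1}}=\pm k_{\theta_{1}'}$, which is precisely the asserted determination of $k_{1}$ up to right multiplication by $\pm I$ (and symmetrically $k_{\theta_{2}}=\pm k_{\theta_{2}'}$), completing the proof.
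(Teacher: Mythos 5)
Your proof is correct and complete. The paper itself offers no argument for this statement---it is recalled as a standard fact with a pointer to chapter II of Knapp---so there is no in-text proof to compare against; your self-contained singular-value-decomposition argument is the standard elementary route, and you handle the two points that actually require care: adjusting a sign in the orthogonal eigenbasis so that $k_{2}$ lands in $K=\so_{2}\left(\real\right)$ rather than merely in $O\left(2\right)$ (with the eigenvalues ordered so that $t\ge0$), and the conjugation computation $u=a_{t}wa_{t}^{-1}$ forcing $w=\pm I$ when $t\neq0$, which yields exactly the stated ambiguity of $k_{1}$ (and of $k_{2}$) up to $\pm I$.
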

Even though the decomposition is not unique, we will call the coordinates
$\left(\theta_{1},t,\theta_{2}\right)$, with $\theta_{1},\theta_{2}\in\left[0,2\pi\right]$
and $t\ge0$, the \emph{Cartan coordinates }on $\sl_{2}\left(\real\right)$.
We denote by $g\left(\theta_{1},t,\theta_{2}\right)$ the element
$k_{\theta_{1}}a_{t}k_{\theta_{2}}$. Recall that in terms of these
coordinates a Haar measure on $\sl_{2}\left(\real\right)$ is given
by

\[
\mu\left(f\right)=\int_{K\times\left(0,\infty\right)\times K}f\left(k_{\theta_{1}}a_{t}k_{\theta_{2}}\right)\sinh\left(t\right)\,\d\theta_{1}\d t\d\theta_{2}.
\]

\subsubsection{Irreducible representations}

All finite dimensional irreducible representations of $\sl_{2}\left(\real\right)$
can be obtained as follows. Fix an integer $n\ge0$ and let $V_{n}$
be the complex vector space of polynomials in $\c\left[z_{1},z_{2}\right]$
which are homogeneous of degree $n$. Define a representation $\rho_{n}$
of $\sl_{2}\left(\real\right)$ by
\[
\rho_{n}\left(\begin{pmatrix}a & b\\
c & d
\end{pmatrix}\right)P\left(z_{1},z_{2}\right)=P\left(az_{1}+bz_{2},cz_{1}+dz_{2}\right),\quad P\in V_{n}.
\]
Then $\dim V_{n}=n+1$ and $\rho_{n}:\sl_{2}\left(\real\right)\rightarrow\gl\left(V_{n}\right)$
is a continuous representation of $\sl_{2}\left(\real\right)$. In
fact, $\rho_{n}$ is irreducible and all irreducible finite dimensional
representations of $\sl_{2}\left(\real\right)$ are obtained in this
way. The representation $\rho_{n}$ is called the representation of
$\sl_{2}\left(\real\right)$ of highest weight $n$. 

Recall that the Lie algebra $\mathfrak{sl}_{2}\left(\real\right)$
of $\sl_{2}\left(\real\right)$ is spanned by
\begin{equation}
\begin{matrix}h & =\begin{pmatrix}1 & 0\\
0 & -1
\end{pmatrix}, & e & =\begin{pmatrix}0 & 1\\
0 & 0
\end{pmatrix}, & f & =\begin{pmatrix}0 & 0\\
1 & 0
\end{pmatrix}\end{matrix}.\label{eq:sl2-relations}
\end{equation}
which satisfy the relations
\[
\begin{matrix}\left[h,e\right] & =2e, & \left[h,f\right] & =-2f, & \left[e,f\right] & =h\end{matrix}.
\]
A smooth finite dimensional representation $\rho:\sl_{2}\left(\real\right)\rightarrow\gl\left(V\right)$
of $\sl_{2}\left(\real\right)$ induces a representation of $\mathfrak{sl}_{2}\left(\real\right)$
by
\[
\rho\left(X\right).v=\frac{\d}{\d t}|_{t=0}\left(\exp\left(tX\right)\right).v,\quad v\in V,X\in\mathfrak{sl}_{2}\left(\real\right).
\]
On $V_{n}$, $\rho\left(h\right)$ acts as
\[
\rho\left(h\right)P\left(z_{1},z_{2}\right)=\frac{\d}{\d t}|_{t=0}\left(\begin{pmatrix}e^{t} & 0\\
0 & e^{-t}
\end{pmatrix}\right)P\left(z_{1},z_{2}\right)=\frac{\d}{\d t}|_{t=0}P\left(e^{t}z_{1},e^{-t}z_{2}\right).
\]
Therefore, with respect to the basis given by the monomials
\[
\left\{ z_{1}^{n},z_{1}^{n-1}z_{2},\dots,z_{2}^{n}\right\} ,
\]
we have:
\[
\rho_{n}\left(h\right)=\begin{pmatrix}n & 0 & 0 & 0\\
0 & n-2 & 0 & 0\\
0 & 0 & \ddots & 0\\
0 & 0 & 0 & -n
\end{pmatrix}.
\]
Hence, for $a_{t}\in\sl_{2}\left(\real\right)$, $t\ge0$, we have
\[
\rho_{n}\left(a_{t}\right)=\begin{pmatrix}e^{\frac{nt}{2}} & 0 & 0 & 0\\
0 & e^{\frac{\left(n-2\right)t}{2}} & 0 & 0\\
0 & 0 & \ddots & 0\\
0 & 0 & 0 & e^{-\frac{nt}{2}}
\end{pmatrix}.
\]

For a general finite dimensional complex representation of $\sl_{2}\left(\real\right)$
we have the following. 
\begin{thm}
(Weyl) Every finite dimensional complex representation of $\sl_{2}\left(\real\right)$
is completely reducible.
\end{thm}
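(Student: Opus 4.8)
The plan is to prove complete reducibility by means of the Casimir element, working directly with the generators $h,e,f$ and the relations in \eqref{eq:sl2-relations}. First I would introduce the Casimir operator $C=ef+fe+\tfrac{1}{2}h^{2}$ in the universal enveloping algebra $U(\mathfrak{sl}_{2}(\real))$ and check, using $[e,f]=h$, $[h,e]=2e$, $[h,f]=-2f$, that $C$ is central; consequently $\rho(C)$ commutes with the entire action in any representation $\rho$. Evaluating $C$ on a highest weight vector $v_{0}\in V_{n}$ (so $hv_{0}=nv_{0}$ and $ev_{0}=0$) and rewriting $ef=fe+h$ gives $Cv_{0}=\tfrac{n(n+2)}{2}v_{0}$; since $C$ is central and $V_{n}$ is irreducible, Schur's lemma forces $C$ to act on all of $V_{n}$ by this scalar. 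The crucial point is that $\tfrac{n(n+2)}{2}\neq0$ precisely when $n\geq 1$, i.e.\ $C$ acts invertibly on every nontrivial irreducible and by $0$ on the trivial representation.

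Complete reducibility is equivalent to the assertion that every submodule $W\subseteq V$ admits a complementary submodule, and I would first settle this in the codimension-one case, i.e.\ for short exact sequences $0\to W\to V\to\c\to0$. When $W$ is a nontrivial irreducible, $\rho(C)$ is a module endomorphism of $V$ acting on $W$ by a nonzero scalar and inducing $0$ on $V/W\cong\c$; hence $\rho(C)(V)\subseteq W$, the restriction $\rho(C)|_{W}$ is invertible, and $\ker\rho(C)$ is a one-dimensional submodule meeting $W$ trivially, so $V=W\oplus\ker\rho(C)$. When $W$ is the trivial irreducible, $V$ is two-dimensional with trivial sub and quotient, so $\rho(\mathfrak{sl}_{2}(\real))$ lands in the abelian algebra of strictly upper-triangular matrices; perfectness, $\mathfrak{sl}_{2}(\real)=[\mathfrak{sl}_{2}(\real),\mathfrak{sl}_{2}(\real)]$, forces $\rho=0$ and the sequence splits trivially. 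The general codimension-one case then follows by induction on $\dim W$: passing to $V/W'$ for a proper nonzero submodule $W'\subseteq W$ and splitting twice produces the required complementary line.

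To pass from codimension one to an arbitrary submodule $W\subseteq V$, I would use the standard $\hom$-module trick. Give $\hom_{\c}(V,W)$ its natural $\mathfrak{sl}_{2}(\real)$-module structure, let $A$ be the submodule of maps whose restriction to $W$ is a scalar multiple of $\id_{W}$ and $B\subseteq A$ the submodule of maps vanishing on $W$; a short check shows $A/B\cong\c$ as a trivial module. Applying the codimension-one result to $0\to B\to A\to\c\to0$ yields an invariant element $T_{0}\in A$, i.e.\ a module homomorphism $V\to W$, with $T_{0}|_{W}=\id_{W}$ after rescaling. Then $\ker T_{0}$ is the desired $\mathfrak{sl}_{2}(\real)$-invariant complement of $W$, which proves complete reducibility.

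I expect the main obstacle to be organizational rather than conceptual: the Casimir computation is immediate, but the two reductions---the induction in the codimension-one case and especially the $\hom$-module trick---require care to verify that the relevant subspaces are genuinely submodules and that the resulting splitting map is $\mathfrak{sl}_{2}(\real)$-equivariant. An alternative I would keep in reserve is Weyl's unitarian trick: a complex representation of $\mathfrak{sl}_{2}(\real)$ extends $\c$-linearly to $\mathfrak{sl}_{2}(\c)$ and restricts to the compact real form $\mathfrak{su}(2)$, with the same invariant subspaces; since $\mathrm{SU}(2)$ is compact and simply connected, the representation integrates and can be made unitary by averaging an inner product over Haar measure, whereupon orthogonal complements of invariant subspaces are invariant. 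This route is shorter to state but trades the algebraic bookkeeping for the analytic input of integrating $\mathfrak{su}(2)$-representations to the group $\mathrm{SU}(2)$.
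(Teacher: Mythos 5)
Your proof is correct, but note that the paper does not actually prove this statement: it is quoted as background, with the proof deferred to Chapter II of Knapp's book. What you have written is the standard algebraic proof of Weyl's theorem (essentially Whitehead's lemma specialized to $\mathfrak{sl}_{2}$), and all the individual steps check out: $C=ef+fe+\tfrac{1}{2}h^{2}$ is central, acts by $\tfrac{n(n+2)}{2}$ on the irreducible of highest weight $n$ and hence invertibly exactly on the nontrivial irreducibles, the perfectness of $\mathfrak{sl}_{2}(\mathbb{R})$ disposes of the trivial-on-trivial extension, and the induction plus the $\operatorname{Hom}(V,W)$ trick (with the one-dimensional complement of $B$ in $A$ automatically a trivial module, again by perfectness, so that $T_{0}$ is genuinely equivariant) reduces the general case to codimension one. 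The only point you leave implicit is the passage between the group and its Lie algebra: the theorem is stated for representations of $\mathrm{SL}_{2}(\mathbb{R})$, whereas your argument splits $\mathfrak{sl}_{2}(\mathbb{R})$-modules. Since a finite dimensional continuous representation of a Lie group is smooth and $\mathrm{SL}_{2}(\mathbb{R})$ is connected, a subspace is $\mathrm{SL}_{2}(\mathbb{R})$-invariant if and only if it is $\mathfrak{sl}_{2}(\mathbb{R})$-invariant, so the two notions of complete reducibility coincide; one sentence to this effect would close the argument. Your fallback via the unitarian trick (complexify, restrict to $\mathfrak{su}(2)$, integrate to the simply connected compact group $\mathrm{SU}(2)$, average an inner product) is also valid and is closer in spirit to the treatment in the cited reference.
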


\subsubsection{\label{subsec:Fin-dim-rep}Real finite dimensional representations
of $\protect\sl_{2}\left(\protect\real\right)$}

Let $V$ be a real vector space and $\rho:\sl_{2}\left(\real\right)\rightarrow\gl\left(V\right)$
a faithful representation. $\rho$ extends uniquely to a representation
$\rho_{\c}:\sl_{2}\left(\real\right)\rightarrow\gl\left(V\otimes_{\real}\c\right)$
of $\sl_{2}\left(\real\right)$ on the complexification $V_{\c}:=V\otimes_{\real}\c$
of $V$. By Weyl's theorem $\rho_{\c}$ decomposes to a direct sum
of irreducible representations:
\[
V_{\c}=\bigoplus_{i=1}^{n}V_{i}.
\]
The eigenvalues of $h$ on $V_{\c}$ are real. Therefore, we can find
highest weight vectors in each $V_{i}$ which are in $V$. An elementary
argument using the relations \ref{eq:sl2-relations} shows that in
fact each $V_{i}$ is stable under complex conjugation and $V\cap V_{i}$
is a real representation of $\sl_{2}\left(\real\right)$ with the
same highest weight as $V_{i}$. This shows:
\begin{prop}
Let $\rho:\sl_{2}\left(\real\right)\rightarrow\gl\left(V\right)$
be a faithful finite dimensional real representation of $\sl_{2}\left(\real\right)$
with $V_{\c}=\bigoplus_{i=1}^{n}V_{k_{i}}$ with $V_{k}$ an irreducible
representation of highest weight $k$. Then $V$ decomposes as a direct
sum of real irreducible representations with highest weights $k_{1},\dots,k_{n}$.
Let 
\[
A=\left\{ a_{t}=\begin{pmatrix}e^{\frac{t}{2}} & 0\\
0 & e^{-\frac{t}{2}}
\end{pmatrix}|t\in\real\right\} .
\]
There is a basis of $V$ in which $\rho\left(A\right)$ is simultaneously
diagonalizable with
\[
\rho\left(a_{t}\right)=\bigoplus\rho_{k_{i}}\left(a_{t}\right)
\]
where,
\[
\rho_{k}\left(a_{t}\right)=\begin{pmatrix}e^{\frac{kt}{2}} & 0 & 0 & 0\\
0 & e^{\frac{\left(k-2\right)t}{2}} & 0 & 0\\
0 & 0 & \ddots & 0\\
0 & 0 & 0 & e^{-\frac{kt}{2}}
\end{pmatrix}.
\]
\end{prop}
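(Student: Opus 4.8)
The plan is to reduce the statement to highest weight theory on the complexification $V_{\c}=V\otimes_{\real}\c$ and then descend back to $V$, exploiting the fact that the weights of $\mathfrak{sl}_{2}\left(\real\right)$ are integers. First I would extend $\rho$ to $\rho_{\c}$ on $V_{\c}$ and invoke Weyl's theorem to write $V_{\c}=\bigoplus_{i=1}^{n}V_{k_{i}}$ as a sum of irreducibles, where $V_{k_{i}}$ has highest weight $k_{i}$. Since $e,h,f$ have real entries, the operators $\rho_{\c}\left(e\right),\rho_{\c}\left(h\right),\rho_{\c}\left(f\right)$ are the $\c$-linear extensions of the real operators $\rho\left(e\right),\rho\left(h\right),\rho\left(f\right)$. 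For each integer $k\ge0$ I set
\[
W_{k}=\left\{ v\in V:\rho\left(e\right)v=0,\ \rho\left(h\right)v=kv\right\} \subseteq V.
\]
Because kernels and eigenspaces of a real operator complexify to the corresponding kernels and eigenspaces of its complexification, and because intersection commutes with complexification, $\left(W_{k}\right)_{\c}$ is exactly the space of highest weight vectors of weight $k$ in $V_{\c}$, of complex dimension equal to the multiplicity $m_{k}$ of $V_{k}$ among the $V_{k_{i}}$. In particular $\dim_{\real}W_{k}=m_{k}$, so there are enough real highest weight vectors.

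Next I would build real irreducible submodules directly. For any $v\in W_{k}$ the ladder relations \eqref{eq:sl2-relations} show that $v,\rho\left(f\right)v,\dots,\rho\left(f\right)^{k}v$ are weight vectors of weights $k,k-2,\dots,-k$ with $\rho\left(f\right)^{k+1}v=0$, and that $U_{v}:=\operatorname{span}_{\real}\left(v,\rho\left(f\right)v,\dots,\rho\left(f\right)^{k}v\right)$ is an $\mathfrak{sl}_{2}\left(\real\right)$-stable real subspace of $V$ whose complexification is the standard irreducible of highest weight $k$; hence $U_{v}$ is a real irreducible of highest weight $k$ and dimension $k+1$. Choosing a real basis $v_{1}^{(k)},\dots,v_{m_{k}}^{(k)}$ of each $W_{k}$ and complexifying, the copies $\left(U_{v_{j}^{(k)}}\right)_{\c}$ exhaust the type-$k$ isotypic component of $V_{\c}$ and lie in direct sum there, since a basis of the highest weight space decomposes an isotypic component into irreducibles. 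Comparing dimensions gives $\sum_{k,j}\dim_{\real}U_{v_{j}^{(k)}}=\dim_{\real}V$, and the $U_{v_{j}^{(k)}}$ are independent because their complexifications are, so
\[
V=\bigoplus_{k}\bigoplus_{j=1}^{m_{k}}U_{v_{j}^{(k)}}
\]
is the desired decomposition into real irreducibles with highest weights $k_{1},\dots,k_{n}$.

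Finally, for the basis claim I would concatenate, for each summand $U_{v_{j}^{(k)}}$, the ordered tuple $v,\rho\left(f\right)v,\dots,\rho\left(f\right)^{k}v$. Each $\rho\left(f\right)^{m}v$ is a $\rho\left(h\right)$-eigenvector of eigenvalue $k-2m$, so in this basis $\rho\left(h\right)$ is diagonal with entries $k,k-2,\dots,-k$ on that block. Since $a_{t}=\exp\!\bigl(\tfrac{t}{2}h\bigr)$ we have $\rho\left(a_{t}\right)=\exp\!\bigl(\tfrac{t}{2}\rho\left(h\right)\bigr)$, which is diagonal with block $\rho_{k}\left(a_{t}\right)=\operatorname{diag}\left(e^{kt/2},e^{(k-2)t/2},\dots,e^{-kt/2}\right)$, giving $\rho\left(a_{t}\right)=\bigoplus_{i}\rho_{k_{i}}\left(a_{t}\right)$. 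The one genuinely nontrivial point, and the step I expect to be the main obstacle, is the descent from $V_{\c}$ to $V$: establishing $\dim_{\real}W_{k}=m_{k}$ so that every complex irreducible factor is realized by a real highest weight vector. This is precisely where the reality of the weights is used (they are integers, so $\rho\left(h\right)$ has real spectrum and $\left(W_{k}\right)_{\c}$ is the honest complexification of $W_{k}$), reflecting the fact that all irreducible representations of $\mathfrak{sl}_{2}\left(\real\right)$ are of real type, so no complex or quaternionic phenomena obstruct the real decomposition.
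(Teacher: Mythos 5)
Your proof is correct and follows essentially the same route as the paper: complexify, apply Weyl's theorem, use the reality of the $h$-eigenvalues to produce real highest weight vectors in $V$, and descend via the ladder relations, with the basis claim then following from $\rho\left(a_{t}\right)=\exp\left(\tfrac{t}{2}\rho\left(h\right)\right)$. You make explicit the multiplicity bookkeeping $\dim_{\real}W_{k}=m_{k}$ and the construction of conjugation-compatible real summands, which the paper compresses into the phrase ``an elementary argument using the relations,'' but this is added detail rather than a different method.
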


\subsubsection{\label{sec:K-invariant-norms}$K$-invariant norms}
\begin{defn}
Let $\rho:\sl_{2}\left(\real\right)\rightarrow\gl\left(V\right)$
be a faithful finite dimensional representation on a real vector space
$V$. An inner product $\left\langle .,.\right\rangle :V\times V\rightarrow\real$
on $V$ is called $\rho$\emph{-standard} if it is \emph{$\rho\left(K\right)$}-invariant\emph{
}and if there exists an orthonormal basis in which $\rho\left(A\right)$
is diagonal. If $\norm .:V\rightarrow\c$ is the norm induced by a
$\rho$-standard inner product on $V$, $\norm .$ is called a $\rho$-standard
norm on $V$.
\end{defn}
If $\norm .:V\rightarrow\real$ is $\rho$-standard, we can compute
the operator norm of the operators in $\rho\left(A^{+}\right)$. Indeed,
for $t\ge0$, $\rho\left(a_{t}\right)$ is orthogonally diagonalizable
with highest eigenvalue $e^{\frac{kt}{2}}$, where $k$ is the highest
weight appearing in $\rho$. Therefore,
\[
\norm{\rho\left(a_{t}\right)}_{\mathrm{op}}=\sup_{0\neq x\in V}\frac{\norm{\rho\left(a_{t}\right)x}}{\norm x}=e^{\frac{kt}{2}}.
\]

\begin{prop}
For any finite dimensional real representation $\rho$ there exist
a $\rho$-standard inner product on $V$.
\end{prop}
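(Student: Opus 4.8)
The plan is to reduce to the irreducible case and then build the inner product explicitly on each summand using the weight basis, rather than trying to average an arbitrary inner product over $K$. First I would observe that an orthogonal direct sum of $\rho$-standard inner products is again $\rho$-standard: if the irreducible summands of $V$ are declared mutually orthogonal and each carries a $\rho(K)$-invariant inner product admitting an orthonormal basis of weight vectors, then the union of these bases is orthonormal, diagonalizes $\rho(A)$, and the resulting total form is $\rho(K)$-invariant. By the decomposition proposition above, $V$ splits as a direct sum of real irreducibles, so it suffices to treat a single real irreducible of highest weight $k$.

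The key observation is that \emph{both} defining properties follow from one algebraic condition. Writing $W=\begin{pmatrix}0 & 1\\-1 & 0\end{pmatrix}=e-f$ for the generator of $\mathfrak{k}=\operatorname{Lie}(K)$ (so that $\exp(\theta W)=k_{\theta}$), the form is $\rho(K)$-invariant precisely when $\rho(W)$ is skew-symmetric, and $\rho(A)$ is orthogonally diagonalizable precisely when $\rho(h)$ is self-adjoint. I would therefore impose the single stronger relation $\rho(e)^{*}=\rho(f)$ together with orthogonality of the distinct weight spaces. Taking adjoints gives $\rho(f)^{*}=\rho(e)$, whence $\rho(W)^{*}=\rho(f)-\rho(e)=-\rho(W)$, which yields $\rho(K)$-invariance; and orthogonality of the weight spaces makes the diagonal operator $\rho(h)$ self-adjoint, producing the orthonormal weight basis diagonalizing $\rho(A)$.

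To realize this condition I would work in the explicit real weight basis $v_{0},\dots,v_{k}$ (the monomials of the polynomial model, with $\rho(h)v_{j}=(k-2j)v_{j}$), in which $\rho(e)$ and $\rho(f)$ shift the weight by $\pm 2$ with nonnegative structure constants. Declaring the $v_{j}$ mutually orthogonal with $\norm{v_{j}}^{2}=c_{j}$, the requirement $\rho(e)^{*}=\rho(f)$ reduces, upon matching weights, to a relation $a_{j}c_{j-1}=b_{j}c_{j}$ between consecutive norms with strictly positive $a_{j},b_{j}$. This recursion determines $c_{0}=1,c_{1},\dots,c_{k}$ as positive reals, and hence a genuine real positive-definite inner product; normalizing to $v_{j}/\sqrt{c_{j}}$ yields the desired orthonormal basis diagonalizing $\rho(A)$.

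The main obstacle here is conceptual rather than computational: the two properties of a $\rho$-standard inner product pull in different directions, and the naive recipe of averaging an arbitrary inner product over the compact group $K$ secures $\rho(K)$-invariance but in general destroys the simultaneous orthogonal diagonalizability of $\rho(A)$. The resolution is to recognize that both properties are consequences of the single Cartan-compatibility relation $\rho(e)^{*}=\rho(f)$, which can be arranged directly on the weight basis; the only point needing verification is the positivity of the recursively defined norms $c_{j}$, which is guaranteed by the positivity of the $\mathfrak{sl}_2$ structure constants.
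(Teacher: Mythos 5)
Your proposal is correct and follows essentially the same route as the paper: reduce to an irreducible summand, make the weight vectors orthogonal, and arrange that $\rho(e)$ and $\rho(f)$ are mutually adjoint so that $\rho(e-f)$, which generates $\rho(K)$, is skew-symmetric. The only difference is presentational — the paper fixes the normalization $\rho(f)v_{i}=\sqrt{(k-i)(i+1)}\,v_{i+1}$ in advance and verifies adjointness, whereas you leave the norms $c_{j}$ as unknowns and solve the (positively-weighted) recursion for them; these are the same computation.
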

\begin{proof}
First, assume $V=V_{k}$ is irreducible with highest weight vector
$v_{0}\in V$. We determine an inner product $\left\langle .,.\right\rangle :V\times V\rightarrow\real$
on $V$ by setting the basis
\[
\left\{ v_{0},v_{1},\dots,v_{k}\right\} 
\]
with
\begin{align*}
\rho\left(f\right)v_{i} & =c_{i}v_{i+1}\\
c_{i} & =\sqrt{\left(k-i\right)\left(i+1\right)},
\end{align*}
to be orthonormal. With respect to this basis, the operators $\rho\left(a_{t}\right)$
are diagonal. Thus, to show that $\left\langle .,.\right\rangle $
is $\rho$-standard it remains to show that it is $\rho\left(K\right)$
invariant. 

$K$ is the one parameter group generated by the element $e-f\in\mathfrak{sl}_{2}\left(\real\right)$.
To show that the inner product is $\rho\left(K\right)$-invariant,
it is enough to show that $\rho\left(e-f\right)$ is antisymmetric
with respect to this inner product. Since $\rho\left(e\right)$ must
send a vector of weight $k-2i$ to a vector of weight $k-2i+2$, one
can verify by using $\left[\rho\left(e\right),\rho\left(f\right)\right]=\rho\left(h\right)$,
that
\[
\rho\left(e\right)v_{i}=d_{i}v_{i-1}
\]
with
\[
d_{i}=\sqrt{i\left(k-i+1\right)}.
\]
For all $0\le i,j\le1$, we have
\[
\left\langle \rho\left(e\right)v_{i},v_{j}\right\rangle =d_{i}\left\langle v_{i-1},v_{j}\right\rangle =d_{i}\delta_{i-1,j}.
\]
Note that
\[
c_{i-1}=\sqrt{\left(k-\left(i-1\right)\right)\left(\left(i-1\right)+1\right)}=\sqrt{\left(k-i+1\right)i}=d_{i}.
\]
Therefore,
\[
\left\langle v_{i},\rho\left(f\right)v_{j}\right\rangle =c_{j}\left\langle v_{i},v_{j+1}\right\rangle =c_{j}\delta_{i,j+1}=c_{i-1}\delta_{i-1,j}=d_{i}\delta_{i-1,j}.
\]
Hence, for any $0\le i,j\le k$,
\[
\left\langle \rho\left(e\right)v_{i},v_{j}\right\rangle =\left\langle v_{i},\rho\left(f\right)v_{j}\right\rangle .
\]
This means that $\rho\left(e\right)$ and $\rho\left(f\right)$ are
adjoints with respect to $\left\langle .,.\right\rangle $. Consequently,
the operator 
\[
\rho\left(e\right)-\rho\left(f\right)=\rho\left(e-f\right)
\]
is antisymmetric. Thus, for $V_{k}$ irreducible, $\rho$-standard
norms exist.

If $V$ is not irreducible, $V$ decomposes as a direct sum $V=\bigoplus_{i}V_{k_{i}}$
of irreducible representations, and we can take an inner product which
is the direct sum of $\rho$-standard inner products on each $V_{k_{i}}$.
\end{proof}

\subsection{\label{sec:Sector-estimates}Sector estimates for Fuchsian groups}

Let $\Gamma<\psl_{2}\left(\real\right)$ be a non-elementary, geometrically
finite Fuchsian group with critical exponent $\delta$. As in the
case of $\sl_{2}\left(\real\right)$, for $g\in\psl_{2}\left(\real\right)$
we define the coordinates $t\left(g\right),\theta_{1}\left(g\right),\theta_{2}\left(g\right)$
as the values $t\left(g\right)\ge0$ and $0\le\theta_{1}\left(g\right),\theta_{2}\left(g\right)<\pi$
such that $g=\pm k_{\theta_{1}\left(g\right)}a_{t\left(g\right)}k_{\theta_{2}\left(g\right)}$.
As mentioned in the introduction, a key step in the proof of our main
theorem involves a decomposition of functions on $\psl\left(2,\real\right)$
into harmonics with respect to the right and left $K$-types. This
will lead us to consider expressions of the form
\begin{equation}
\sum_{\gamma\in\Gamma,\norm{\gamma}\le T}e^{2in\theta_{1}\left(\gamma\right)}e^{2im\theta_{2}\left(\gamma\right)}.\label{eq:trigo-average-chap2}
\end{equation}
These were analyzed by J. Bourgain, A. Kontorovich, and P. Sarnak
in \cite{bourgain2010sector}. A similar result in the setting of $\mathbb{SL}_{2}(\mathbb{C})$ was proved by Vinogradov \cite{Vinogradov_2013}. The general statement for $\mathrm{SO}(n,1)$ under the existence of spectral gaps for $\Gamma$ is \cite{Mohammadi_2015}. We will only consider the case of $\mathrm{SL}_{2}(\mathbb{R})$ so we will only review here the results of \cite{bourgain2010sector}.

For lattices, the asymptotic distribution of such harmonics was studies
by Good \cite{good2006local}. Bourgain, Kontorovich, and Sarnak \cite{bourgain2010sector}
have managed to provide estimates in the infinite volume case. There
are two main differences between finite and infinite volume. The first
is that for a general discrete group of infinite co-volume, the eigenvector
with the lowest eigenvalue is no longer constant. If the critical
exponent $\delta$ satisfies $\delta>\frac{1}{2}$ and $\Gamma$ is
geometrically finite, the lowest eigenvalue is
\[
\lambda_{0}=\delta\left(1-\delta\right),
\]
and it is of multiplicity $1$. As was proven by Patterson in \cite[theorem 3.1]{patterson1976limit},
a corresponding eigenvector is given in terms of the Patterson-Sullivan
measure by
\[
\phi_{0}\left(x\right)=\int1\,\d\mu_{x}=\int P\left(x,\xi\right)^{\delta}\,\d\mu_{0},
\]
where $P\left(x,\xi\right)$ is the Poisson kernel. Therefore, the
main term of the limit 
\[
\sum_{\gamma\in\Gamma,\norm{\gamma}\le T}e^{2in\theta_{1}\left(\gamma\right)}e^{2\pi i\theta_{2}\left(\gamma\right)}
\]
will not distribute uniformly on the circle as in the case of a lattice.
In fact, the angular distribution is given by the Patterson-Sullivan
measure. Recall that choosing the angles $\theta_{1},\theta_{2}$
to lie between $0$ and $\pi$ means that the boundary of the hyperbolic
plane is identified with $\left[0,\pi\right]$ (see remark \ref{rem:symmetric}).
In this convention, using Polar coordinates $\left(r,\theta\right)$
on the unit disc with $0\le\theta<\pi$, the function $\phi_{0}$
can be written as:
\[
\phi_{0}\left(r,\theta\right)=\int_{0}^{\pi}\left(\frac{1-r^{2}}{\left|re^{2i\theta_{1}}-e^{2i\alpha}\right|^{2}}\right)\,\d\mu\left(\alpha\right),
\]
with $\mu$ as in \ref{rem:symmetric}. The lift of $\phi_{0}$ to
a function on $\psl_{2}\left(\real\right)$ can be expressed in the
coordinates $\left(\theta_{1},r,\theta_{2}\right)$ with $r\left(g\right)=\tanh\left(t\left(g\right)\right)$,
as
\[
\phi_{0}\left(\theta_{1},r,\theta_{2}\right)=\int_{0}^{\pi}\left(\frac{1-r^{2}}{\left|re^{2i\theta_{1}}-e^{2i\alpha}\right|^{2}}\right)\,\d\mu\left(\alpha\right).
\]

The eigenfunction $\phi_{0}$ then gives the leading term in theorem
\ref{thm:BKS}. The precise result is the following (\cite[theorem 1.5]{bourgain2010sector}).
\begin{thm}
(J. Bourgain, A. Kontorovich, P. Sarnak)\label{thm:BKS} Let $\Gamma$
be a non-elementary geometrically finite Fuchsian group of the second
kind with critical exponent $\delta>\frac{1}{2}$. Let
\[
0<\delta\left(1-\delta\right)=\lambda_{0}<\lambda_{1}<\dots<\lambda_{N}<\frac{1}{4}
\]
be the eigenvalues of the Laplacian on $\Gamma\backslash\mathbb{H}$
below $\frac{1}{4}$. Write
\[
\lambda_{i}=s_{i}\left(1-s_{i}\right)
\]
with $s_{i}>\frac{1}{2}$. Then, for integers $n$ and $k$ there
are constants $c_{1}\left(n,k\right),\dots,c_{N}\left(n,k\right)\in\c$
such that
\begin{align*}
\sum_{\gamma\in\Gamma_{T}}e^{2in\theta_{1}\left(\gamma\right)}e^{2ik\theta_{2}\left(\gamma\right)} & =\hat{\mu}\left(2n\right)\hat{\mu}\left(2k\right)\pi^{\frac{1}{2}}\frac{\Gamma\left(\delta-\nicefrac{1}{2}\right)}{\Gamma\left(\delta+1\right)}T^{2\delta}+\sum_{i=1}^{N}c_{i}\left(n,k\right)T^{2s_{i}}\\
 & +\o\left(T^{\frac{1}{4}+2\delta\cdot\frac{3}{4}}\log\left(T\right)^{\frac{1}{4}}\left(1+\left|n\right|+\left|k\right|\right)^{\frac{3}{4}}\right),
\end{align*}
as $T\rightarrow\infty$. Here $\left|c_{j}\left(n,k\right)\right|\ll\left|c_{i}\left(0,0\right)\right|$
as $n$ and $k$ vary, and the implied constants depend only on $\Gamma$.
\end{thm}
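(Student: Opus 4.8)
The plan is to prove Theorem \ref{thm:BKS} by a spectral argument on the infinite-volume surface $\Gamma\backslash\h$, isolating the main terms from the finite point spectrum of the Laplacian below $\frac14$ and absorbing the remainder into the contribution of the continuous spectrum. In the Cartan coordinates the condition $\norm{\gamma}\le T$ amounts to $t(\gamma)\le 2\log T+\o(1)$, so up to a harmless boundary layer the sum in question is a smoothing of
\[
S_{n,k}(T)=\sum_{\gamma\in\Gamma}e^{2in\theta_1(\gamma)}e^{2ik\theta_2(\gamma)}\,\mathbf{1}_{\{t(\gamma)\le 2\log T\}}.
\]
First I would replace the sharp radial cutoff by smooth minorant and majorant $\xi_T^{\pm}$ at a scale $\eta>0$ to be optimized later, and assemble the test functions
\[
\Phi_{n,k}^{\pm}\!\left(k_{\theta_1}a_t k_{\theta_2}\right)=e^{2in\theta_1}\,\xi_T^{\pm}(t)\,e^{2ik\theta_2}
\]
on $\psl_2(\real)$, which by construction carry left $K$-type $2n$ and right $K$-type $2k$. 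Summing over $\Gamma$ and evaluating at the identity reproduces $S_{n,k}(T)$ up to an error governed by the mass of $\Gamma$ in the boundary annulus of width $\eta$, itself controlled by the radial counting of \cite{lax1982asymptotic}.

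Next I would feed $\Phi_{n,k}^{\pm}$ into the automorphic spectral decomposition. The radial profile $\xi_T^{\pm}$ has a Selberg/Harish--Chandra transform $h^{\pm}(s)$ satisfying $h^{\pm}(s)\sim c(s)\,T^{2s}$ for $\mathrm{Re}(s)>\frac12$, while the angular factors $e^{2in\theta_1},e^{2ik\theta_2}$ select the isotypic components of $K$-type $2n$ and $2k$. Expanding the associated kernel against the $L^2$-spectrum of $\Gamma\backslash\h$, which by Lax--Phillips theory consists of the finitely many eigenvalues $\lambda_0<\cdots<\lambda_N<\frac14$ together with the continuous spectrum on $[\frac14,\infty)$, each discrete eigenvalue $\lambda_j=s_j(1-s_j)$ contributes a term of the shape
\[
h^{\pm}(s_j)\,\big\langle \Phi_{n,k}^{\pm},\phi_j\big\rangle\,\phi_j(o)\;\sim\;c_j(n,k)\,T^{2s_j},
\]
where $c_j(n,k)$ is the product of the $(2n)$-th and $(2k)$-th angular Fourier coefficients of the boundary distribution attached to $\phi_j$ (for $j=0$ this is the Patterson--Sullivan eigenfunction of \cite{patterson1976limit}, so these are exactly $\hat\mu(2n)$ and $\hat\mu(2k)$). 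Because these angular coefficients are Fourier coefficients of finite measures, $|c_j(n,k)|\ll|c_j(0,0)|$ uniformly in $n,k$, as asserted.

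The leading term is then the $j=0$ contribution: here $s_0=\delta$, and computing the transform $h^{\pm}(\delta)$ explicitly — the relevant integral is a Beta integral — produces the constant $\pi^{1/2}\Gamma(\delta-\nicefrac12)/\Gamma(\delta+1)$ multiplying $\hat\mu(2n)\hat\mu(2k)T^{2\delta}$, matching the statement, while the remaining eigenvalues give the secondary terms $\sum_{i=1}^N c_i(n,k)T^{2s_i}$. The error term is the sum of the continuous-spectrum contribution and the smoothing error, and estimating these two uniformly in the harmonics while optimizing $\eta$ is the hard part: a direct $L^2$ bound on the critical line $\mathrm{Re}(s)=\frac12$ gives only size $T$, which is too large, so one must interpolate between the pointwise size of the incomplete Poincar\'e series and its $L^2$-norm, the latter estimated through the spectral measure of the continuous part. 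The derivative (Sobolev) cost of the $K$-type-$(2n,2k)$ test function is of order $1+|n|+|k|$, and after the interpolation and the balancing of the smoothing scale this enters to the power $\frac34$, alongside the exponent $\frac14+2\delta\cdot\frac34$ and the factor $\log(T)^{1/4}$. Thus the genuine obstacle is not the extraction of the main and secondary terms — which is essentially forced by the spectral decomposition and the explicit Patterson--Sullivan eigenfunction — but the sharp, $K$-type-uniform control of the remainder, requiring precise asymptotics of the spherical transform in both the spectral and the $K$-type parameters together with a careful optimization.
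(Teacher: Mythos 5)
First, a point of orientation: the paper does not prove this statement at all. Theorem \ref{thm:BKS} is quoted verbatim from Bourgain, Kontorovich and Sarnak \cite[theorem 1.5]{bourgain2010sector} and is used as a black box, so there is no internal proof to compare against. Your outline does follow the strategy of the original argument: smooth the radial cutoff $t(\gamma)\le 2\log T$, form incomplete Poincar\'e series with prescribed left and right $K$-types, expand against the spectral decomposition of $L^{2}\left(\Gamma\backslash\h\right)$, and read off the main term from the base eigenfunction $\phi_{0}$ (whose boundary data is the Patterson--Sullivan measure, producing $\hat{\mu}\left(2n\right)\hat{\mu}\left(2k\right)$ and the Beta-integral constant $\pi^{\frac12}\Gamma\left(\delta-\nicefrac12\right)/\Gamma\left(\delta+1\right)$) together with the secondary terms $T^{2s_{i}}$ from the remaining point spectrum below $\frac14$.

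As a proof, however, the proposal has a genuine gap, and you name it yourself: the entire quantitative content of the theorem --- the exponent $\frac14+\frac32\delta$, the factor $\log\left(T\right)^{\frac14}$, and above all the uniform dependence $\left(1+\left|n\right|+\left|k\right|\right)^{\frac34}$ on the $K$-types --- lives in the estimate of the continuous-spectrum contribution plus the smoothing error, and you do not carry that estimate out. Saying that one ``must interpolate between the pointwise size of the incomplete Poincar\'e series and its $L^{2}$-norm'' and that the Sobolev cost ``enters to the power $\frac34$ after the balancing'' describes the shape of the answer rather than deriving it; nothing in the sketch lets a reader check that the exponent on the harmonics is $\frac34$ rather than $1$, or that the continuous spectrum does not contribute at order $T^{2\delta}$ --- this is exactly where $\delta>\frac12$ and the Lax--Phillips spectral gap must be used, and the sketch never invokes them quantitatively. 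A second, smaller soft spot: your justification of $\left|c_{j}\left(n,k\right)\right|\ll\left|c_{j}\left(0,0\right)\right|$ as ``Fourier coefficients of finite measures'' is only valid for $j=0$, where the boundary measure is the positive Patterson--Sullivan measure; for $j\ge1$ the boundary distributions attached to the $\phi_{j}$ need not be positive, so the inequality $\left|\hat{\nu}\left(n\right)\right|\le\hat{\nu}\left(0\right)$ does not apply and the uniformity in $n,k$ requires a separate argument. Given that the paper treats this theorem as an external input, the honest options are either to cite it as the paper does or to reproduce the full error analysis of \cite{bourgain2010sector}; the present sketch does neither.
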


\section{Proof of the main theorem}

\subsection{Reduction to the case of functions which are invariant under $\rho\left(-1\right)$}

In order to work with $\psl_{2}\left(\real\right)$ instead of $\sl_{2}\left(\real\right)$,
we reduce to the case of functions which are invariant respect to
translation by $\rho\left(-1\right)$. That is, functions with satisfy
$f\left(\rho\left(-1\right)x\right)=f\left(x\right)$ for all $x\in\endd\left(\real^{2}\right)$.
Note that if $V$ decomposes as
\[
V=\bigoplus_{i}V_{k_{i}}
\]
into irreducible representations of $\sl_{2}\left(\real\right)$,
we have
\[
\rho\left(-1\right)=\bigoplus_{i}\rho_{k_{i}}\left(-1\right),
\]
with $\rho_{k}\left(-1\right)$ given by
\[
\rho_{k}\left(-1\right)=\begin{cases}
1 & k\text{ is even}\\
-1 & k\text{ is odd}
\end{cases}.
\]
Since $\norm{\rho\left(-1\right)}=1$ and $-1$ lies in the center
of $\sl_{2}\left(\real\right)$, summation over $\Gamma$ or over
$\Gamma_{T}$ for $T>0$ is invariant under multiplication by $\rho\left(-1\right)$.
Therefore, if $h\in C_{c}\left(\endd\left(V\right)\right)$ is satisfies
$h\left(\rho\left(-1\right)x\right)=-h\left(x\right)$ for all $x\in\endd\left(\real^{2}\right)$,
we have
\[
\mu_{\Gamma,T}\left(h\right)=\frac{1}{T^{\nicefrac{2\delta}{k}}}\sum_{\gamma\in\Gamma}h\left(\frac{\rho\left(\gamma\right)}{T}\right)=\frac{1}{T^{\frac{2\delta}{k}}}\sum_{\gamma\in\Gamma}h\left(\rho\left(-1\right)\frac{\rho\left(\gamma\right)}{T}\right)=-\frac{1}{T^{\frac{2\delta}{k}}}\sum_{\gamma\in\Gamma}h\left(\frac{\rho\left(\gamma\right)}{T}\right)
\]
so $\mu_{\Gamma,T}\left(h\right)=0$. Let $f$ be a continuous function
on $\endd\left(V\right)$. Then,
\begin{align*}
\mu_{\Gamma,T}\left(f\right) & =\frac{1}{T^{\nicefrac{2\delta}{k}}}\sum_{\gamma\in\Gamma}\frac{f\left(\frac{\rho\left(\gamma\right)}{T}\right)+f\left(\rho\left(-1\right)\frac{\rho\left(\gamma\right)}{T}\right)}{2}+\frac{f\left(\frac{\rho\left(\gamma\right)}{T}\right)-f\left(\rho\left(-1\right)\frac{\rho\left(\gamma\right)}{T}\right)}{2}\\
 & =\frac{1}{T^{\nicefrac{2\delta}{k}}}\sum_{\gamma\in\Gamma}\frac{f\left(\frac{\rho\left(\gamma\right)}{T}\right)+f\left(\rho\left(-1\right)\frac{\rho\left(\gamma\right)}{T}\right)}{2}.
\end{align*}
Assuming theorem \ref{thm:unnorm} is proved for functions which are
even with respect to $\rho$, we get
\begin{align*}
\lim_{T\rightarrow\infty}\mu_{\Gamma,T}\left(f\right) & =\frac{\delta}{2k}\cdot V_{\Gamma}\cdot\frac{1}{2}\cdot\int_{K\times\left[0,\infty\right)\times K}f\left(\rho\left(k_{\theta_{1}}\right)\cdot tP_{k}\cdot\rho\left(k_{\theta_{2}}\right)\right)t^{\frac{2\delta}{k}-1}\,\d\mu\left(\theta_{1}\right)\d t\d\mu\left(\theta_{2}\right)\\
 & +\frac{\delta}{2k}V_{\Gamma}\cdot\frac{1}{2}\cdot\int_{K\times\left[0,\infty\right)\times K}f\left(\rho\left(-1\right)\rho\left(k_{\theta_{1}}\right)\cdot tP_{k}\cdot\rho\left(k_{\theta_{2}}\right)\right)t^{\frac{2\delta}{k}-1}\,\d\mu\left(\theta_{1}\right)\d t\d\mu\left(\theta_{2}\right).
\end{align*}
Since $\mu$ is symmetric with respect to the transformation $\theta\mapsto\theta+\pi$
and
\[
\rho\left(-1\right)\rho\left(k_{\theta_{1}}\right)=\rho\left(k_{\theta_{1}}\right)=\rho\left(k_{\theta_{1}+\pi}\right),\quad\theta_{1}\in\left[0,2\pi\right]
\]
(see remark \ref{rem:symmetric}), we get:
\begin{align*}
\lim_{T\rightarrow\infty}\nu_{\Gamma,T}\left(f\right) & =\frac{\delta}{2k}V_{\Gamma}\cdot\frac{1}{2}\cdot\int_{K\times\left[0,\infty\right)\times K}f\left(\rho\left(k_{1}\right)\cdot tP_{k}\cdot\rho\left(k_{2}\right)\right)t^{\frac{2\delta}{k}-1}\,\d\mu\left(k_{1}\right)\d t\d\mu\left(k_{2}\right)\\
 & +\frac{\delta}{2k}V_{\Gamma}\cdot\frac{1}{2}\cdot\int_{K\times\left[0,\infty\right)\times K}f\left(\rho\left(k_{1}\right)\cdot tP_{k}\cdot\rho\left(k_{2}\right)\right)t^{\frac{2\delta}{k}-1}\,\d\mu\left(k_{1}\right)\d t\d\mu\left(k_{2}\right).\\
 & =\mu_{\Gamma}\left(f\right).
\end{align*}

Clearly, the same argument also works for the the quantitative theorem
\ref{thm:Main_Quant}. Thus, to prove theorems \ref{thm:unnorm},
\ref{thm:normalized}, and \ref{thm:Main_Quant} it suffices to prove
theorem \ref{thm:normalized} and section (2) of theorem \ref{thm:Main_Quant}
for functions that are even under the action of $\rho\left(-1\right)$.
The rest of the chapter is devoted to the proof of these.

\subsection{\label{sec:Partition-into-annuli}Partition of the sum into annuli}

Let $f:\endd\left(V\right)\rightarrow\c$ be continuous. We will analyze
the averages $\nu_{T}\left(f\right)$ by partitioning the sum into
radial annuli. Recall that our fixed norm on $\endd\left(V\right)$
is an operator norm. Therefore, for any $x,y\in\endd\left(\real^{2}\right)$,
\[
\norm{xy}\le\norm x\norm y.
\]
Recall that we have defined
\[
\nu_{T}\left(f\right)=\frac{1}{T^{\frac{2\delta}{k}}}\sum_{\gamma\in\Gamma_{T}}f\left(\frac{\rho\left(\gamma\right)}{T}\right).
\]
We will partition the sum $\nu_{T}\left(f\right)$ into radial sections
according to the norm of the elements in $\rho\left(\Gamma\right)$.
For $N\in\nat$ write
\[
\nu_{T}\left(f\right)=\frac{1}{T^{\frac{2\delta}{k}}}\sum_{j=0}^{N-1}\sum_{\frac{j}{N}T<\norm{\rho\left(\gamma\right)}\le\frac{j+1}{N}T}f\left(\frac{\rho\left(\gamma\right)}{T}\right).
\]
in order to simplify our notation, we define
\[
S_{T,j}=\left\{ \gamma\in\Gamma|\frac{j}{N}T<\norm{\rho\left(\gamma\right)}\le\frac{j+1}{N}T\right\} .
\]
Then, with this notation,
\[
\nu_{T}\left(f\right)=\frac{1}{T^{\frac{2\delta}{k}}}\sum_{j=0}^{N-1}\sum_{\gamma\in S_{T,j}}f\left(\frac{\rho\left(\gamma\right)}{T}\right).
\]
Recall that the norm $\norm .$ is invariant under $\rho\left(K\right)$
and $\norm{\rho\left(a_{t}\right)}=e^{\frac{kt}{2}}$ for all $t\ge0$.
Hence, for $\gamma\in\Gamma$ with $\gamma=k_{1}a_{t}k_{2}$, we have
\[
\norm{\rho\left(\gamma\right)}=\norm{\rho\left(k_{1}\right)\rho\left(a_{t}\right)\rho\left(k_{2}\right)}=\norm{\rho\left(a_{t}\right)}=e^{\frac{kt}{2}}.
\]
Define, as in section \ref{sec:K-invariant-norms}, for $\gamma\in\Gamma$,
\[
\left|\gamma\right|=e^{\frac{1}{2}\d\left(o,\gamma.o\right)}.
\]
where $\d$ is the hyperbolic distance on $\mathbb{H}^{2}$ and $o\in\mathbb{H}^{2}$
is stabilized by $K$. Then, 
\[
S_{T,j}=\left\{ \gamma\in\Gamma|\left(\frac{j}{N}T\right)^{\frac{1}{k}}<\left|\gamma\right|\le\left(\frac{j+1}{N}T\right)^{\frac{1}{k}}\right\} .
\]
The set $S_{T,j}.o\subseteq\mathbb{H}^{2}$ is then an annulus in
hyperbolic space with center $o$ and radii $\left(\frac{j}{N}T\right)^{\frac{1}{k}}$
and $\left(\frac{j+1}{N}T\right)^{\frac{1}{k}}$. Lax and Phillips
\cite[theorem 1]{lax1982asymptotic} have shown that
\[
\lim_{T\rightarrow\infty}\frac{\#\left\{ \gamma\in\Gamma|\d\left(o,\gamma.o\right)<R\right\} }{e^{\delta R}}=V_{\Gamma}.
\]
Therefore,
\begin{equation}
\lim_{T\rightarrow\infty}\frac{\left|S_{T,j}\right|}{T^{\frac{2\delta}{k}}}=\lim_{T\rightarrow\infty}\frac{1}{T^{\frac{2\delta}{k}}}\left(\left|\Gamma_{\left(\frac{j+1}{N}T\right)^{\frac{2}{k}}}\right|-\left|\Gamma_{\left(\frac{j}{N}T\right)^{\frac{2}{k}}}\right|\right)=\frac{V_{\Gamma}}{N^{\frac{2\delta}{k}}}\left(\left(j+1\right)^{\frac{2\delta}{k}}-j^{\frac{2\delta}{k}}\right).\label{eq:M-Tj-is-thenumber}
\end{equation}
The idea of the proof of our main theorems is to replace $\frac{\rho\left(\gamma\right)}{T}$
in the argument of $f$ with $\frac{j}{N}\frac{\rho\left(\gamma\right)}{\norm{\rho\left(\gamma\right)}}$.
As we shall see, this can be done when $N$ is large enough.

Fix $\epsilon>0$. We will show that there is a $T\left(\epsilon\right)$
such that for $T>T\left(\epsilon\right)$,
\[
\left|\nu_{T}\left(f\right)-\nu_{\Gamma}\left(f\right)\right|<\epsilon.
\]
The function $f$ is continuous in the closed unit ball in $\endd\left(V\right)$,
which is compact. Let $N_{0}$ be such that for $N>N_{0}$, whenever
$\norm{x-y}<\frac{1}{N}$ with $x,y\in\endd\left(V\right)$ of norm
$\norm x,\norm y\le1$ we have,
\[
\left|f\left(x\right)-f\left(y\right)\right|<\epsilon.
\]
By the definition of $S_{T,j}$, $\frac{j}{N}<\frac{\norm{\rho\left(\gamma\right)}}{T}\le\frac{j+1}{N}$
for all $\gamma\in S_{T,j}$. Thus, for $\gamma\in S_{T,j}$,
\[
\norm{\frac{\rho\left(\gamma\right)}{T}-\frac{j}{N}\frac{\rho\left(\gamma\right)}{\norm{\rho\left(\gamma\right)}}}=\norm{\frac{\rho\left(\gamma\right)}{\norm{\rho\left(\gamma\right)}}\left(\frac{\norm{\rho\left(\gamma\right)}}{T}-\frac{j}{N}\right)}\le\left|\frac{\norm{\rho\left(\gamma\right)}}{T}-\frac{j}{N}\right|\cdot\frac{\norm{\rho\left(\gamma\right)}}{\norm{\rho\left(\gamma\right)}}<\frac{1}{N}.
\]
We will use this estimate to replace $\frac{\rho\left(\gamma\right)}{T}$
with $\frac{j}{N}\frac{\rho\left(\gamma\right)}{\norm{\rho\left(\gamma\right)}}$
when summing over $\gamma$ in the annulus $S_{T,j}$. Define
\[
\tilde{\nu}_{T,N}\left(f\right)=\frac{1}{T^{\frac{2\delta}{k}}}\sum_{j=0}^{N-1}\sum_{\gamma\in S_{T,j}}f\left(\frac{j}{N}\frac{\rho\left(\gamma\right)}{\norm{\rho\left(\gamma\right)}}\right).
\]
Then, for any $T>0$,
\begin{align*}
\left|\nu_{T}\left(f\right)-\tilde{\nu}_{T,N}\left(f\right)\right| & \le\frac{1}{T^{\frac{2\delta}{k}}}\sum_{j=0}^{N-1}\sum_{\gamma\in S_{T,j}}\left|f\left(\frac{\rho\left(\gamma\right)}{T}\right)-f\left(\frac{j}{N}\frac{\rho\left(\gamma\right)}{\norm{\rho\left(\gamma\right)}}\right)\right|\le\frac{\left|\Gamma_{T}\right|}{T^{\frac{2\delta}{k}}}\epsilon.
\end{align*}
Since $\frac{\left|\Gamma_{T}\right|}{T^{\frac{2\delta}{k}}},\frac{\left|S_{T,j}\right|}{T^{\frac{2\delta}{k}}}$
converges, $\frac{\left|\Gamma_{T}\right|}{T^{\frac{2\delta}{k}}}$
and $\frac{\left|S_{T,j}\right|}{T^{\frac{2\delta}{k}}}$ are bounded
for all $T\ge0$. Let $C_{\Gamma}$ denote a bound for both. Then,
for $N>N_{0}$,
\begin{equation}
\left|\nu_{T}\left(f\right)-\tilde{\nu}_{T,N}\left(f\right)\right|\le C_{\Gamma}\epsilon.\label{eq:par_bound_1}
\end{equation}
We can therefore deal with the averages $\tilde{\nu}_{T,N}\left(f\right)$
instead of $\nu_{T}\left(f\right)$. 

We wish to analyze the sums over the annuli $S_{T,j}$ individually.
In order to accomplish that we need to normalize the sum over each
$S_{T,j}$. Define:
\[
M_{T,j}=\left(\frac{T}{N}\right)^{\frac{2\delta}{k}}\left(\left(j+1\right)^{\frac{2\delta}{k}}-j^{\frac{2\delta}{k}}\right).
\]
From \ref{eq:M-Tj-is-thenumber} we see that $M_{T,j}\sim\left|S_{T,j}\right|$
as $T$ tends to infinity. We can write $\tilde{\nu}_{T,N}\left(f\right)$
as
\begin{align*}
\tilde{\nu}_{T,N}\left(f\right) & =\frac{1}{N^{\frac{2\delta}{k}}}\cdot\frac{1}{\left(\frac{T}{N}\right)^{\frac{2\delta}{k}}}\sum_{\norm{\gamma}\le\frac{T}{N}}f\left(\frac{j}{N}\frac{\rho\left(\gamma\right)}{\norm{\rho\left(\gamma\right)}}\right)\\
 & +\sum_{j=1}^{N-1}\left(\frac{1}{N}\right)^{\frac{2\delta}{k}}\left(\left(j+1\right)^{\frac{2\delta}{k}}-j^{\frac{2\delta}{k}}\right)\frac{1}{M_{T,j}}\sum_{\gamma\in S_{T,j}}f\left(\frac{j}{N}\frac{\rho\left(\gamma\right)}{\norm{\rho\left(\gamma\right)}}\right).
\end{align*}
Define
\[
\nu_{T,N}\left(f\right)=\frac{2\delta}{k}\sum_{j=1}^{N-1}\left(\frac{1}{N}\right)^{\frac{2\delta}{k}}j^{\frac{2\delta}{k}-1}\frac{1}{M_{T,j}}\sum_{\gamma\in S_{T,j}}f\left(\frac{j}{N}\frac{\rho\left(\gamma\right)}{\norm{\rho\left(\gamma\right)}}\right).
\]
In the average $\nu_{T,N}$, the sum over each annulus is normalized.
The difference between $\nu_{T,N}\left(f\right)$ and $\tilde{\nu}_{T,N}\left(f\right)$
is then bounded by
\begin{align*}
\left|\tilde{\nu}_{T,N}\left(f\right)-\nu_{T,N}\left(f\right)\right| & \le\norm f_{\infty,1}\cdot\frac{1}{N^{\frac{2\delta}{k}}}\left(\frac{N}{T}\right)^{\frac{2\delta}{k}}\left|\Gamma_{\frac{T}{N}}\right|\\
 & +\norm f_{\infty,1}\sum_{j=1}^{N-1}\left(\frac{1}{N}\right)^{\frac{2\delta}{k}}j^{\frac{2\delta}{k}-1}\left|\frac{\left(j+1\right)^{\frac{2\delta}{k}}-j^{\frac{2\delta}{k}}}{j^{\frac{2\delta}{k}-1}}-\frac{2\delta}{k}\right|\frac{\left|S_{T,j}\right|}{M_{T,j}},
\end{align*}
with $\norm f_{\infty,1}=\max_{\norm x\le1}\left|f\left(x\right)\right|$.
As $j$ ranges over integers between $0$ and $N$, if $j\neq0$ $\frac{1}{j}$
is always smaller than $1$. Hence, if $j\neq0$ there is some constant
$C$, which does not depend on $j$, such that,
\begin{equation}
\left|\left(j+1\right)^{\frac{2\delta}{k}}-j^{\frac{2\delta}{k}}-\frac{2\delta}{k}j^{\frac{2\delta}{k}-1}\right|\le C\cdot j^{\frac{2\delta}{k}-2}.\label{eq:(j+1)-j}
\end{equation}
In particular, 
\[
\lim_{j\rightarrow\infty}\frac{\left(j+1\right)^{\frac{2\delta}{k}}-j^{\frac{2\delta}{k}}}{j^{\frac{2\delta}{k}-1}}=\frac{2\delta}{k}.
\]
Let $N_{1}>N_{0}>0$ be such that for $N>N_{1}$, if $j>\epsilon N$,
we have
\[
\left|\frac{\left(j+1\right)^{\frac{2\delta}{k}}-j^{\frac{2\delta}{k}}}{j^{\frac{2\delta}{k}-1}}-\frac{2\delta}{k}\right|<\epsilon.
\]
We can write $\left|\tilde{\nu}_{T,N}\left(f\right)-\nu_{T,N}\left(f\right)\right|$
as
\begin{align*}
\left|\tilde{\nu}_{T,N}\left(f\right)-\nu_{T,N}\left(f\right)\right| & \le\norm f_{\infty,1}\cdot\frac{1}{N^{\frac{2\delta}{k}}}\left(\frac{N}{T}\right)^{\frac{2\delta}{k}}\left|\Gamma_{\frac{T}{N}}\right|\\
 & +\norm f_{\infty,1}\sum_{j=1}^{\left\lfloor \epsilon N\right\rfloor }\left(\frac{1}{N}\right)^{\frac{2\delta}{k}}j^{\frac{2\delta}{k}-1}\left|\frac{\left(j+1\right)^{\frac{2\delta}{k}}-j^{\frac{2\delta}{k}}}{j^{\frac{2\delta}{k}-1}}-\frac{2\delta}{k}\right|\frac{\left|S_{T,j}\right|}{M_{T,j}}\\
 & +\norm f_{\infty,1}\sum_{j=\left\lfloor \epsilon N\right\rfloor +1}^{N-1}\left(\frac{1}{N}\right)^{\frac{2\delta}{k}}j^{\frac{2\delta}{k}-1}\left|\frac{\left(j+1\right)^{\frac{2\delta}{k}}-j^{\frac{2\delta}{k}}}{j^{\frac{2\delta}{k}-1}}-\frac{2\delta}{k}\right|\frac{\left|S_{T,j}\right|}{M_{T,j}}.
\end{align*}
where $\left\lfloor \epsilon N\right\rfloor $ is the largest integer
which is smaller than $\epsilon N$. Then, for $N>N_{2}=\max\left\{ N_{1},\epsilon^{-\frac{k}{2\delta}}\right\} $,
and $j>\epsilon N$,
\begin{align*}
\left(\frac{N}{T}\right)^{\frac{2\delta}{k}}\left|\Gamma_{\frac{T}{N}}\right| & \le C_{\Gamma}\\
\frac{1}{N^{\frac{2\delta}{k}}} & <\epsilon.\\
\frac{\left|S_{T,j}\right|}{M_{T,j}} & \le C_{\Gamma}\\
\left|\frac{\left(j+1\right)^{\frac{2\delta}{k}}-j^{\frac{2\delta}{k}}}{j^{\frac{2\delta}{k}-1}}-\frac{2\delta}{k}\right| & <\epsilon.
\end{align*}
Also, the sequence
\[
\frac{\left(j+1\right)^{\frac{2\delta}{k}}-j^{\frac{2\delta}{k}}}{j^{\frac{2\delta}{k}-1}}-\frac{2\delta}{k}
\]
is bounded by some $S>0$ for all $j\in\nat$. In total, we get
\begin{align*}
\left|\tilde{\nu}_{T,N}\left(f\right)-\nu_{T,N}\left(f\right)\right| & \le\norm f_{\infty,1}\frac{1}{N^{\frac{2\delta}{k}}}C_{\Gamma}+\norm f_{\infty,1}\sum_{j=1}^{\left\lfloor \epsilon N\right\rfloor }\left(\frac{1}{N}\right)^{\frac{2\delta}{k}}j^{\frac{2\delta}{k}-1}\cdot S\cdot C_{\Gamma}\\
 & +\norm f_{\infty,1}\sum_{j=1}^{N-1}\left(\frac{1}{N}\right)^{\frac{2\delta}{k}}j^{\frac{2\delta}{k}-1}C_{\Gamma}\epsilon.
\end{align*}
For $L>0$, the sum $\sum_{j=1}^{LN}\left(\frac{1}{N}\right)^{\frac{2\delta}{k}}j^{\frac{2\delta}{k}-1}$
can be written as $\sum_{j=1}^{LN}\frac{1}{N}\left(\frac{j}{N}\right)^{\frac{2\delta}{k}-1}$.
Therefore, it is bounded by the integral
\[
\int_{0}^{L}x^{\frac{2\delta}{k}-1}\,\d x=\frac{k}{2\delta}L^{\frac{2\delta}{k}}.
\]
Thus, for $N>N_{2}$ we get
\begin{align}
\left|\tilde{\nu}_{T,N}\left(f\right)-\nu_{T,N}\left(f\right)\right| & \le\norm f_{\infty,1}C_{\Gamma}\epsilon+S\norm f_{\infty,1}C_{\Gamma}\frac{k}{2\delta}\cdot\epsilon^{\frac{2\delta}{k}}+C_{\Gamma}\norm f_{\infty,1}\cdot\frac{k}{2\delta}\cdot\epsilon\nonumber \\
 & \le\left(1+\frac{k}{2\delta}\left(S+1\right)\right)C_{\Gamma}\norm f_{\infty,1}\left(\epsilon+\epsilon^{\frac{2\delta}{k}}\right).\label{eq:par_bound_2}
\end{align}
Combining \ref{eq:par_bound_1} and \ref{eq:par_bound_2} we see that
for $N>N_{2}$ and $T>0$, 
\begin{align*}
\left|\nu_{T}\left(f\right)-\nu_{T,N}\left(f\right)\right| & \le\left|\nu_{T}\left(f\right)-\tilde{\nu}_{T,N}\left(f\right)\right|+\left|\tilde{\nu}_{T,N}\left(f\right)-\nu_{T,N}\left(f\right)\right|.\\
 & \le\left(\left(1+\frac{k}{2\delta}\left(S+1\right)\right)C_{\Gamma}\norm f_{\infty,1}+C_{\Gamma}\right)\left(\epsilon+\epsilon^{\frac{2\delta}{k}}\right).
\end{align*}
Put $C_{0}=\left(1+\frac{k}{2\delta}\left(S+1\right)\right)C_{\Gamma}\norm f_{\infty,1}+C_{\Gamma}$.
Then, for $N>N_{2}$ and every $T>0$
\begin{equation}
\left|\nu_{T}\left(f\right)-\nu_{T,N}\left(f\right)\right|\le C_{0}\left(\epsilon+\epsilon^{\frac{2\delta}{k}}\right).\label{eq:SUM-TO-ANNULI}
\end{equation}
Thus, to analyze the measures $\nu_{T}\left(f\right)$ it is possible
to consider instead the measures $\nu_{T,N}$
\[
\nu_{T,N}\left(f\right)=\frac{2\delta}{k}\sum_{j=1}^{N-1}\left(\frac{1}{N}\right)^{\frac{2\delta}{k}}j^{\frac{2\delta}{k}-1}\frac{1}{M_{T,j}}\sum_{\gamma\in S_{T,j}}f\left(\frac{j}{N}\frac{\rho\left(\gamma\right)}{\norm{\rho\left(\gamma\right)}}\right).
\]

\begin{rem}
Note that the sum $\sum_{j=1}^{N}\frac{1}{N}\left(\frac{j}{N}\right)^{\frac{2\delta}{k}-1}$
looks like a ``Riemann sum'' of $\int_{0}^{1}t^{\frac{2\delta}{k}-1}\,\d t$
(this function is not necessarily bounded). This will be the source
of the integration over the interval $\left[0,1\right]$ in the expression
of the limit measure $\nu_{\Gamma}$.
\end{rem}
We now turn our attention to the averages over individual annuli.
Define:
\begin{equation}
\nu_{T,N,j}\left(f\right)=\frac{1}{M_{T,j}}\sum_{\gamma\in S_{T,j}}f\left(\frac{j}{N}\frac{\rho\left(\gamma\right)}{\norm{\rho\left(\gamma\right)}}\right).\label{eq:V_T-N-j}
\end{equation}
$\nu_{T,N,j}\left(f\right)$ is a normalized average over the annuli
$S_{T,j}$. We have
\[
\nu_{T,N}\left(f\right)=\frac{2\delta}{k}\sum_{j=1}^{N-1}\left(\frac{1}{N}\right)^{\frac{2\delta}{k}}j^{\frac{2\delta}{k}-1}\nu_{T,N,j}\left(f\right).
\]

In section \ref{subsec:Analysis-of-averages} we will analyze the
measures $\nu_{T,N,j}\left(f\right)$ as $T$ tends to infinity.

\subsubsection{Error term for H\"{o}lder functions}

In order to get the error bound in theorem \ref{thm:Main_Quant} we
need a quantitative estimate of the difference between $\nu_{T}\left(f\right)$
and $\nu_{T,N}\left(f\right)$. In this subsection we assume $f:\endd\left(V\right)\rightarrow\c$
is H\"{o}lder continuous with exponent $\alpha\in\left(0,1\right]$ and
constant $\norm f_{\mathrm{Lip}\alpha}$ with respect our fixed norm,
i.e.,
\[
\left|f\left(x\right)-f\left(y\right)\right|\le\norm f_{\mathrm{Lip}\alpha}\norm{x-y}^{\alpha},\quad x,y\in\endd\left(V\right).
\]
Recall that for $0\le j\le N$ and $\gamma\in S_{T,j}$,
\[
\norm{\frac{\rho\left(\gamma\right)}{T}-\frac{j}{N}\frac{\rho\left(\gamma\right)}{\norm{\rho\left(\gamma\right)}}}<\frac{1}{N}.
\]
Consequently,
\begin{align*}
\left|\nu_{T}\left(f\right)-\frac{1}{T^{\frac{2\delta}{k}}}\sum_{j=0}^{N}\sum_{\gamma\in S_{T,j}}f\left(\frac{j}{N}\frac{\rho\left(\gamma\right)}{\norm{\rho\left(\gamma\right)}}\right)\right| & \le\frac{1}{T^{\frac{2\delta}{k}}}\sum_{j=0}^{N-1}\sum_{\gamma\in S_{T,j}}\left|f\left(\frac{\rho\left(\gamma\right)}{T}\right)-f\left(\frac{j}{N}\frac{\rho\left(\gamma\right)}{\norm{\rho\left(\gamma\right)}}\right)\right|\\
 & \le\frac{1}{T^{\frac{2\delta}{k}}}\sum_{j=0}^{N-1}\sum_{\gamma\in S_{T,j}}\norm f_{\mathrm{Lip}\alpha}N^{-\alpha}=\norm f_{\mathrm{Lip}\alpha}\frac{\left|\Gamma_{T}\right|}{T^{\frac{2\delta}{k}}}N^{-\alpha}\\
 & \le\norm f_{\mathrm{Lip}\alpha}C_{\Gamma}N^{-\alpha}.
\end{align*}
We also need to estimate the error of replacing $\left(j+1\right)^{\frac{2\delta}{k}}-j^{\frac{2\delta}{k}}$
with $\frac{2\delta}{k}\cdot j^{\frac{2\delta}{k}-1}$. Recall from
\ref{eq:(j+1)-j} that for some constant $C>0$,
\[
\left|\left(j+1\right)^{\frac{2\delta}{k}}-j^{\frac{2\delta}{k}}-\frac{2\delta}{k}j^{\frac{2\delta}{k}-1}\right|\le Cj^{\frac{2\delta}{k}-2},
\]
for all $j\ge1$. Using this we can estimate the difference
\begin{align}
\left|\frac{1}{T^{\frac{2\delta}{k}}}\sum_{j=0}^{N-1}\sum_{\gamma\in S_{T,j}}f\left(\frac{j}{N}\frac{\rho\left(\gamma\right)}{\norm{\rho\left(\gamma\right)}}\right)-\frac{2\delta}{k}\sum_{j=1}^{N-1}\frac{1}{N}\left(\frac{j}{N}\right)^{\frac{2\delta}{k}-1}\frac{1}{M_{T,j}}\sum_{\gamma\in S_{T,j}}f\left(\frac{j}{N}\frac{\rho\left(\gamma\right)}{\norm{\rho\left(\gamma\right)}}\right)\right|.\label{eq:difference-annuli}
\end{align}
In the first sum, the term $j=0$ is given by
\[
\frac{1}{T^{\frac{2\delta}{k}}}\sum_{\gamma\in S_{T,0}}f\left(0\right)=\frac{\left|S_{T,0}\right|}{T^{\frac{2\delta}{k}}}f\left(0\right).
\]
This term is bounded by
\[
\norm f_{\infty,1}\frac{1}{N^{\frac{2\delta}{k}}}\frac{\left|\Gamma_{\nicefrac{T}{N}}\right|}{\left(\frac{T}{N}\right)^{\frac{2\delta}{k}}}\le C_{\Gamma}\frac{1}{N^{\frac{2\delta}{k}}}\norm f_{\infty,1}.
\]
The sum over the rest of the terms $1\le j\le N-1$ is bounded by
\[
\left|\sum_{j=1}^{N-1}\frac{1}{N^{\frac{2\delta}{k}}}\left(\left(j+1\right)^{\frac{2\delta}{k}}-j^{\frac{2\delta}{k}}\right)\frac{1}{M_{T,j}}\sum_{\gamma\in S_{T,j}}f\left(\frac{j}{N}\frac{\rho\left(\gamma\right)}{\norm{\rho\left(\gamma\right)}}\right)-\frac{2\delta}{k}\sum_{j=1}^{N-1}\frac{1}{N}\left(\frac{j}{N}\right)^{\frac{2\delta}{k}-1}\frac{1}{M_{T,j}}\sum_{\gamma\in S_{T,j}}f\left(\frac{j}{N}\frac{\rho\left(\gamma\right)}{\norm{\rho\left(\gamma\right)}}\right)\right|.
\]
Which is smaller or equal than the following:
\[
\sum_{j=1}^{N-1}\frac{1}{N^{\frac{2\delta}{k}}}\left|\left(j+1\right)^{\frac{2\delta}{k}}-j^{\frac{2\delta}{k}}-\frac{2\delta}{k}j^{\frac{2\delta}{k}-1}\right|\frac{\left|S_{T,j}\right|}{M_{T,j}}\norm f_{\infty,1}\cdot
\]
This is then bounded by
\[
CC_{\Gamma}\norm f_{\infty,1}\sum_{j=1}^{N-1}\frac{1}{N^{\frac{2\delta}{k}}}j^{\frac{2\delta}{k}-2}=CC_{\Gamma}\norm f_{\infty,1}\frac{1}{N^{\frac{2\delta}{k}}}\sum_{j=1}^{N-1}j^{\frac{2\delta}{k}-2}.
\]
For $\frac{1}{2}<\delta<1$, we always have $\frac{2\delta}{k}-2\neq0,-1$,
and so the sum $\sum_{j=1}^{N-1}j^{\frac{2\delta}{k}-2}$ is bounded
by the integral
\[
\sum_{j=1}^{N-1}j^{\frac{2\delta}{k}-2}\le\int_{1}^{N}t^{\frac{2\delta}{k}-2}\,\d t\le C_{\delta,k}N^{\frac{2\delta}{k}-1}.
\]
so, in this case, the difference \ref{eq:difference-annuli} is bounded
by
\[
C_{\Gamma}\norm f_{\infty,1}N^{-\frac{2\delta}{k}}+C_{\delta,k}C_{\Gamma}\norm f_{\infty,1}N^{-1}.
\]
If $\delta=1$ and $k>2$, the same bound holds. If $\delta=k=1$,
we have 
\[
\frac{1}{N^{\frac{2\delta}{k}}}\sum_{j=1}^{N-1}j^{\frac{2\delta}{k}-2}=\frac{1}{N^{2}}\sum_{j=1}^{N-1}j^{0}\le\frac{1}{N}
\]
Finally, If $\delta=1$ and $k=2$, then $\frac{2\delta}{k}-2=-1$
so
\[
\frac{1}{N}\sum_{j=1}^{N-1}j^{-1}\le\frac{1}{N}\log N.
\]
Hence, for any $\frac{1}{2}<\delta\le1$ and $k\in\nat$,
\[
\frac{1}{N^{\frac{2\delta}{k}}}\sum_{j=1}^{N-1}j^{\frac{2\delta}{k}-2}\le\tilde{C}_{1}\frac{1}{N}\log N,
\]
For some constant $\tilde{C}_{1}$ depending only on $\Gamma$ and
$k$. Define $C_{1}=CC_{\Gamma}\tilde{C}_{1}$. $C_{1}$ depends only
on $\Gamma$ and $k$, and in total,
\begin{equation}
\left|\nu_{T}\left(f\right)-\nu_{T,N}\left(f\right)\right|\le C_{1}\norm f_{\infty,1}\left(N^{-\frac{2\delta}{k}}+N^{-1}\log N\right)+\norm f_{\mathrm{Lip}\alpha}C_{\Gamma}N^{-\alpha}.\label{eq:Par-to-N-error}
\end{equation}

\subsection{Analysis of averages over annuli\label{subsec:Analysis-of-averages}}

We now turn to analyze the averages $\nu_{T,N,j}\left(f\right)$ over
the individual annuli. Recall that
\[
\nu_{T,N,j}\left(f\right)=\frac{1}{M_{T,j}}\sum_{\gamma\in S_{T,j}}f\left(\frac{j}{N}\frac{\rho\left(\gamma\right)}{\norm{\rho\left(\gamma\right)}}\right),\quad j=1,2,\dots,N-1.
\]
We will see that these averages converge as $T$ tends to infinity.
This will be done using the sector estimates obtained by Bourgain,
Kontorovich, and Sarnak (theorem \ref{thm:BKS}). 

To deal with all the averages $\nu_{T,N,j}\left(f\right)$ at once
and to simplify notation we will normalize the parameters $N$ and
$j$. The general setting is the following. Fix $\beta\in\left(0,1\right)$
and define:
\begin{align*}
S_{T} & =\left\{ \gamma\in\Gamma|T<\norm{\rho\left(\gamma\right)}\le\left(1+\beta\right)T\right\} \\
M_{T} & =T^{\frac{2\delta}{k}}\left(\left(1+\beta\right)^{\frac{2\delta}{k}}-1\right).
\end{align*}
In the case of the measures $\nu_{T,N,j}$, $T$ is replaced by $\frac{Tj}{N}$
and $\beta$ by $\frac{1}{j}$. Fix a constant $0<a\le1$ ($a$ plays
the role of $\frac{j}{N}$). We will consider the measures $\lambda_{T}$
on $\endd\left(V\right)$ given by:
\[
\lambda_{T}\left(f\right)=\frac{1}{M_{T}}\sum_{\gamma\in S_{T}}f\left(a\cdot\frac{\rho\left(\gamma\right)}{\norm{\rho\left(\gamma\right)}}\right),\quad f\in C\left(\endd\left(V\right)\right).
\]
Define a function $h:\psl_{2}\left(\real\right)\rightarrow\c$ by
\begin{equation}
h\left(g\right)=f\left(a\cdot\frac{\rho\left(\tilde{g}\right)}{\norm{\rho\left(\tilde{g}\right)}}\right),\quad g\in\psl_{2}\left(\real\right)\label{eq:definition-of-h}
\end{equation}
where $\tilde{g}\in\sl_{2}\left(\real\right)$ has image $g$ in $\psl_{2}\left(\real\right)$.
$h$ is well defined since by assumption $f$ is invariant under translation
by $\rho\left(-1\right)$. Let $\overline{S}_{T}$ be the image of
$S_{T}$ in $\psl_{2}\left(\real\right)$. Since $\Gamma$ contains
$-1$,
\begin{equation}
\lambda_{T}\left(f\right)=\frac{1}{M_{T}}\sum_{\gamma\in S_{T}}f\left(a\cdot\frac{\rho\left(\gamma\right)}{\norm{\rho\left(\gamma\right)}}\right)=\frac{2}{M_{T}}\sum_{\gamma\in\overline{S}_{T}}h\left(\gamma\right).\label{eq:alpha-h}
\end{equation}
Let $\theta_{1}\left(g\right),\theta_{2}\left(g\right)$ be the Cartan
coordinates on $\psl_{2}\left(\real\right)$. Instead of the standard
coordinate $t\left(g\right)$ on $A^{+}$, it will be more useful
to us to use the coordinate $r\left(g\right)$ obtained from $t$
by $r=\tanh\left(\frac{t}{2}\right)$ so that $r\in\left[0,1\right)$.
\begin{lem}
\label{lem:extension-of-h}For any $0\le\theta_{1},\theta_{2}\le\pi$
the limit
\[
\lim_{r\rightarrow1^{-}}h\left(k\left(\theta_{1}\right)a_{t\left(r\right)}k\left(\theta_{2}\right)\right)
\]
exists and is uniform in $\theta_{1},\theta_{2}$. Consequently, the
function $\tilde{h}:\left[0,\pi\right]\times\left[0,1\right)\times\left[0,\pi\right]\rightarrow\c$
given by
\[
\tilde{h}\left(\theta_{1},r,\theta_{2}\right)=h\left(k_{\theta_{1}}a_{t\left(r\right)}k_{\theta_{2}}\right)
\]
can be extended to a continuous function on $\left[0,\pi\right]\times\left[0,1\right]\times\left[0,\pi\right]$.
\end{lem}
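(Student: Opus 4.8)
The plan is to show that the normalized matrix $\frac{\rho(\tilde g)}{\norm{\rho(\tilde g)}}$ converges as $t\to\infty$ (equivalently $r\to1^{-}$) to a matrix depending continuously on $(\theta_{1},\theta_{2})$, that the convergence is uniform in $(\theta_{1},\theta_{2})$, and then to transfer these properties to $h$ using continuity of $f$ on a compact ball. The whole argument rests on identifying the limit of the normalized Cartan part with the projection $P_{k}$.

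First I would compute the limit of the diagonal part. Since the inner product is $\rho$-standard, $\rho(a_{t})$ is diagonal in an orthonormal weight basis, scaling a weight-$w$ coordinate by $e^{wt/2}$, and $\norm{\rho(a_{t})}_{\mathrm{op}}=e^{kt/2}$. Hence $\frac{\rho(a_{t})}{e^{kt/2}}$ is diagonal with entry $1$ on each weight-$k$ coordinate and $e^{(w-k)t/2}$ on a weight-$w$ coordinate with $w<k$. Every such off-top entry tends to $0$, so
\[
\lim_{t\to\infty}\frac{\rho(a_{t})}{e^{kt/2}}=P_{k}
\]
in operator norm, with $P_{k}$ exactly the orthogonal projection onto the weight-$k$ space defined above.

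Next I would upgrade this to the full $KAK$ element. Because the inner product is $\rho(K)$-invariant, each $\rho(k_{\theta})$ is an isometry, so $\norm{\rho(k_{\theta})}_{\mathrm{op}}=1$; in particular $\norm{\rho(k_{\theta_{1}}a_{t}k_{\theta_{2}})}=e^{kt/2}$ and therefore $\frac{\rho(\tilde g)}{\norm{\rho(\tilde g)}}=\rho(k_{\theta_{1}})\frac{\rho(a_{t})}{e^{kt/2}}\rho(k_{\theta_{2}})$. Submultiplicativity of the operator norm then gives
\[
\norm{\frac{\rho(k_{\theta_{1}}a_{t}k_{\theta_{2}})}{e^{kt/2}}-\rho(k_{\theta_{1}})P_{k}\rho(k_{\theta_{2}})}\le\norm{\frac{\rho(a_{t})}{e^{kt/2}}-P_{k}},
\]
whose right-hand side is independent of $\theta_{1},\theta_{2}$ and tends to $0$. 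Thus $\frac{\rho(\tilde g)}{\norm{\rho(\tilde g)}}\to\rho(k_{\theta_{1}})P_{k}\rho(k_{\theta_{2}})$ uniformly in $(\theta_{1},\theta_{2})$. All these matrices, scaled by $a$, lie in the closed operator-norm ball of radius $a\le1$ (note $\norm{P_{k}}=1$), which is compact, so $f$ is uniformly continuous there; composing the uniformly convergent arguments with $f$ yields $h(k_{\theta_{1}}a_{t(r)}k_{\theta_{2}})\to f(a\cdot\rho(k_{\theta_{1}})P_{k}\rho(k_{\theta_{2}}))$ uniformly in $\theta_{1},\theta_{2}$, which is the first assertion of the lemma.

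For the continuous extension I would set $M(r)=\frac{\rho(a_{t(r)})}{e^{kt(r)/2}}$ for $r<1$ and $M(1)=P_{k}$; the limit just computed says precisely that $M$ is continuous at $r=1$, while on $[0,1)$ continuity follows from continuity of $r\mapsto t(r)=2\operatorname{arctanh}r$ and of $\rho$. Then $(\theta_{1},r,\theta_{2})\mapsto a\cdot\rho(k_{\theta_{1}})M(r)\rho(k_{\theta_{2}})$ is continuous on the compact box $[0,\pi]\times[0,1]\times[0,\pi]$ by continuity of matrix multiplication and of $\theta\mapsto\rho(k_{\theta})$, and it agrees with the argument of $f$ defining $\tilde h$ for $r<1$. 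Composing with the continuous $f$ produces the desired continuous extension of $\tilde h$, with value $f(a\cdot\rho(k_{\theta_{1}})P_{k}\rho(k_{\theta_{2}}))$ at $r=1$. The only genuine content is the collapse $\rho(a_{t})/e^{kt/2}\to P_{k}$; everything after it is formal, the one point worth flagging being that the uniformity comes for free from $K$-invariance of the norm, so I do not anticipate a real obstacle beyond this bookkeeping.
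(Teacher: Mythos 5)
Your argument is correct and follows essentially the same route as the paper: use $\rho(K)$-invariance of the norm to reduce to the diagonal part, observe that $\rho(a_t)/e^{kt/2}\to P_k$ since every non-top weight entry $e^{(w-k)t/2}$ decays, and obtain uniformity from compactness of $\rho(K)$ together with continuity of $f$. Your explicit use of submultiplicativity to make the uniformity quantitative is a minor elaboration of the paper's one-line compactness remark, not a different approach.
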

\begin{proof}
Taking the limit $r\rightarrow1^{-}$ is equivalent to taking the
limit $t\rightarrow\infty$. 
\begin{align*}
h\left(k_{\theta_{1}}a_{t}k_{\theta_{2}}\right) & =f\left(a\frac{\rho\left(k_{\theta_{1}}\right)\rho\left(a_{t}\right)\rho\left(k_{\theta_{2}}\right)}{\norm{\rho\left(k_{\theta_{1}}\right)\rho\left(a_{t}\right)\rho\left(k_{\theta_{2}}\right)}}\right)=f\left(a\rho\left(k_{\theta_{1}}\right)\frac{\rho\left(a_{t}\right)}{\norm{\rho\left(a_{t}\right)}}\rho\left(k_{\theta_{2}}\right)\right).
\end{align*}
Write the decomposition of $V$ as 
\[
V=\bigoplus_{i}V_{k_{i}}
\]
with $V_{k_{i}}$ irreducible of weight $k_{i}$, and such that all
$V_{k_{i}}$ with $k_{i}=k$ ($k=\max k_{i}$) appear in the first
$m$ summands. Recall from \ref{sec:K-invariant-norms} that
\[
\rho\left(a_{t}\right)=\begin{pmatrix}\rho_{k_{1}}\left(a_{t}\right) & 0 & 0\\
0 & \ddots & 0\\
0 & 0 & \rho_{k_{n}}\left(a_{t}\right)
\end{pmatrix}.
\]
with
\[
\rho_{k_{i}}\left(a_{t}\right)=\begin{pmatrix}e^{\frac{k_{i}t}{2}} & 0 & 0 & 0\\
0 & e^{\frac{\left(k_{i}-2\right)t}{2}} & 0 & 0\\
0 & 0 & \ddots & 0\\
0 & 0 & 0 & e^{-\frac{k_{i}t}{2}}
\end{pmatrix}.
\]
Since $\norm{\rho\left(a_{t}\right)}=e^{\frac{kt}{2}}$, we see that
\[
\lim_{t\rightarrow\infty}\frac{\rho_{k}\left(a_{t}\right)}{\norm{\rho\left(a_{t}\right)}}=P_{k}.
\]
Therefore,
\[
\lim_{r\rightarrow1^{-}}\tilde{h}\left(\theta_{1},r,\theta_{2}\right)=\lim_{t\rightarrow\infty}h\left(k_{\theta_{1}}a_{t}k_{\theta_{2}}\right)=f\left(\rho\left(k_{\theta_{1}}\right)\cdot aP_{k}\cdot\rho\left(k_{\theta_{2}}\right)\right).
\]
Since the image of $K$ in $\endd\left(V\right)$ is compact and $f$
is continuous, the limit is uniform in $\theta_{1},\theta_{2}$.
\end{proof}
We can therefore extend $\tilde{h}$ to a continuous function on $\left[0,\pi\right]\times\left[0,1\right]\times\left[0,\pi\right]$.
The proof of the lemma shows that the values of $\tilde{h}$ for $r=1$
are given by
\begin{equation}
\tilde{h}\left(k_{1},1,k_{2}\right)=f\left(\rho\left(k_{\theta_{1}}\right)\cdot aP_{k}\cdot\rho\left(k_{\theta_{2}}\right)\right).\label{eq:boundary-value-of-h}
\end{equation}

We are now ready to study the averages $\lambda_{T}\left(f\right)$
in the general case of a continuous function $f$, and give an error
estimate when $f$ satisfies the H\"{o}lder condition.

\subsubsection{The case of continuous functions}

For continuous functions, the convergence result will be obtained
through the following proposition.
\begin{prop}
\label{prop:annuli_continuous}Let $\psi:\sl_{2}\left(\real\right)\rightarrow\c$
be a continuous function such that the function $\tilde{\psi}:\left[0,\pi\right]\times\left[0,1\right)\times\left[0,\pi\right]$
is given by
\[
\tilde{\psi}\left(\theta_{1},r,\theta_{2}\right)=\psi\left(k_{\theta_{1}}a_{t\left(r\right)}k_{\theta_{2}}\right),
\]
extends to a continuous function on $\left[0,\pi\right]\times\left[0,1\right]\times\left[0,\pi\right]$.
Let $\mu$ be the lift of the Patterson-Sullivan to a measure on $K$
as in remark \ref{rem:symmetric}. Then,
\begin{align*}
\lim_{T\rightarrow\infty}\frac{1}{M_{T}}\sum_{\gamma\in\overline{S}_{T}}\psi\left(\gamma\right) & =\frac{1}{2}V_{\Gamma}\int_{\left[0,\pi\right]\times\left[0,\pi\right]}\tilde{\psi}\left(\theta_{1},1,\theta_{2}\right)\,\d\mu\left(\theta_{1}\right)\d\mu\left(\theta_{2}\right).
\end{align*}
\end{prop}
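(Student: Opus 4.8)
The plan is to reduce the sum over the annulus to an integral against the Patterson--Sullivan measure in three stages: first replace $\psi$ by its boundary trace at $r=1$, then expand that trace in the exponential characters $e^{2in\theta_{1}}e^{2im\theta_{2}}$, and finally evaluate the resulting exponential sums using Theorem~\ref{thm:BKS}. The starting observation is a change of radial variable. Since $\norm{\rho\left(\gamma\right)}=e^{kt\left(\gamma\right)/2}$ while the norm appearing in Theorem~\ref{thm:BKS} is the standard matrix norm $e^{t\left(\gamma\right)/2}$, the condition $T<\norm{\rho\left(\gamma\right)}\le\left(1+\beta\right)T$ defining $\overline{S}_{T}$ is exactly $R<e^{t\left(\gamma\right)/2}\le R'$ with $R=T^{1/k}$ and $R'=\left(1+\beta\right)^{1/k}T^{1/k}$. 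Writing $c_{0}:=\pi^{1/2}\Gamma\left(\delta-\nicefrac{1}{2}\right)/\Gamma\left(\delta+1\right)=V_{\Gamma}/2$, one checks that $M_{T}=T^{\frac{2\delta}{k}}\left(\left(1+\beta\right)^{\frac{2\delta}{k}}-1\right)=\left(R'\right)^{2\delta}-R^{2\delta}$, so $M_{T}$ is precisely the difference of the two leading terms predicted by Theorem~\ref{thm:BKS}.

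For the first stage I would set $\psi_{\infty}\left(\theta_{1},\theta_{2}\right):=\tilde{\psi}\left(\theta_{1},1,\theta_{2}\right)$ and estimate $\psi\left(\gamma\right)-\psi_{\infty}\left(\theta_{1}\left(\gamma\right),\theta_{2}\left(\gamma\right)\right)$ for $\gamma\in\overline{S}_{T}$. Every such $\gamma$ satisfies $t\left(\gamma\right)>\tfrac{2}{k}\log T$, hence $r\left(\gamma\right)=\tanh\left(t\left(\gamma\right)/2\right)\ge r_{T}$ with $r_{T}\to1$; since $\tilde{\psi}$ is uniformly continuous on the compact box $\left[0,\pi\right]\times\left[0,1\right]\times\left[0,\pi\right]$, the quantity $\sup_{\theta_{1},\theta_{2}}\left|\tilde{\psi}\left(\theta_{1},r,\theta_{2}\right)-\psi_{\infty}\left(\theta_{1},\theta_{2}\right)\right|$ tends to $0$ as $r\to1$. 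Multiplying this uniform bound by $\left|\overline{S}_{T}\right|/M_{T}$, which is bounded in $T$ (a fact that itself follows from the $\left(n,m\right)=\left(0,0\right)$ case of the third stage), shows that it suffices to prove the proposition with $\psi$ replaced by $\psi_{\infty}$, a continuous function of $\left(\theta_{1},\theta_{2}\right)$ alone.

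The heart of the argument is the third stage. For the single character $e^{2in\theta_{1}}e^{2im\theta_{2}}$, the sum over $\overline{S}_{T}$ is the difference of two counting functions from Theorem~\ref{thm:BKS} at radii $R'$ and $R$. The leading terms contribute $c_{0}\,\hat{\mu}\left(2n\right)\hat{\mu}\left(2m\right)\bigl(\left(R'\right)^{2\delta}-R^{2\delta}\bigr)=c_{0}\,\hat{\mu}\left(2n\right)\hat{\mu}\left(2m\right)M_{T}$; each exceptional term contributes $c_{i}\left(n,m\right)\bigl(\left(R'\right)^{2s_{i}}-R^{2s_{i}}\bigr)=\o\left(T^{2s_{i}/k}\right)$, which is $o\left(M_{T}\right)$ because $s_{i}<\delta$; and the BKS error term is $\o\bigl(T^{\left(1/4+3\delta/2\right)/k}\log\left(T\right)^{1/4}\bigr)$ for fixed $n,m$. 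The decisive bookkeeping is that $\tfrac{1}{4}+\tfrac{3\delta}{2}<2\delta$ is equivalent to $\delta>\tfrac{1}{2}$, exactly the standing hypothesis, so this error is also $o\left(M_{T}\right)$. Dividing by $M_{T}$ therefore gives $\lim_{T}M_{T}^{-1}\sum_{\gamma\in\overline{S}_{T}}e^{2in\theta_{1}\left(\gamma\right)}e^{2im\theta_{2}\left(\gamma\right)}=c_{0}\hat{\mu}\left(2n\right)\hat{\mu}\left(2m\right)=\tfrac{1}{2}V_{\Gamma}\int\!\!\int e^{2in\theta_{1}}e^{2im\theta_{2}}\,\d\mu\left(\theta_{1}\right)\d\mu\left(\theta_{2}\right)$, where $\hat{\mu}\left(2n\right)=\int e^{2in\theta}\,\d\mu\left(\theta\right)$ is the Fourier coefficient; this is the asserted formula for a single character.

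Finally I would pass from characters to general $\psi_{\infty}$ by density. Because $k_{\pi}=-I$ and $f$ was arranged to be invariant under $\rho\left(-1\right)$, the trace $\psi_{\infty}$ descends to a continuous function on the boundary circle, i.e. is $\pi$-periodic in each variable, so by Stone--Weierstrass it is uniformly approximable by trigonometric polynomials $P=\sum_{\left|n\right|,\left|m\right|\le M}a_{n,m}e^{2in\theta_{1}}e^{2im\theta_{2}}$. A standard $\epsilon/3$ argument then closes the proof: the difference $M_{T}^{-1}\sum_{\overline{S}_{T}}\left(\psi_{\infty}-P\right)$ is bounded by $\left(\left|\overline{S}_{T}\right|/M_{T}\right)\norm{\psi_{\infty}-P}_{\infty}$, uniformly small; the finite combination $M_{T}^{-1}\sum_{\overline{S}_{T}}P$ converges by the third stage applied term by term; and $\tfrac{1}{2}V_{\Gamma}\int\!\!\int P\,\d\mu\,\d\mu$ approximates $\tfrac{1}{2}V_{\Gamma}\int\!\!\int\psi_{\infty}\,\d\mu\,\d\mu$. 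I expect the only genuine obstacle to be the uniformity in the error analysis of the third stage: one must take $T\to\infty$ for a fixed trigonometric polynomial, whose degree and hence the implied constants in Theorem~\ref{thm:BKS} are frozen, before letting the approximation improve; once the limits are taken in this order, the exponent inequality $\delta>\tfrac{1}{2}$ makes every secondary term negligible.
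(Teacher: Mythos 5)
Your proposal is correct and follows essentially the same route as the paper: reduce to the characters $e^{2in\theta_{1}}e^{2im\theta_{2}}$ via Stone--Weierstrass, use the uniform convergence $r\left(\gamma\right)\rightarrow1$ on $\overline{S}_{T}$ to pass to the boundary trace, and evaluate each character sum as a difference of two ball counts from Theorem~\ref{thm:BKS}, with $\delta>\tfrac{1}{2}$ making the secondary and error terms $o\left(M_{T}\right)$. The only cosmetic difference is the order of operations (you pass to $\tilde{\psi}\left(\cdot,1,\cdot\right)$ first and then approximate on $\mathbb{T}^{2}$, while the paper applies Stone--Weierstrass on the three-dimensional box to basis functions $e^{2in\theta_{1}}\varphi\left(r\right)e^{2im\theta_{2}}$ and handles the radial factor inside each term), which changes nothing of substance.
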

\begin{proof}
By the Stone-Weierstrass theorem, the algebra spanned by functions
of the form
\[
e^{2in\theta_{1}}\varphi\left(r\right)e^{2\pi im\theta_{2}},\quad n,m\in\z,\varphi\in C\left(\left[0,1\right]\right).
\]
is dense in the algebra of all continuous functions on $\left[0,\pi\right]\times\left[0,1\right]\times\left[0,\pi\right]$
with respect to the maximum norm. Therefore, it is enough to prove
the claim for functions $\psi$ of the form
\[
\psi\left(g\right)=e^{2in\theta_{1}\left(g\right)}\xi\left(r\left(g\right)\right)e^{2im\theta_{2}\left(g\right)},
\]
with $n,m\in\z$ and $\xi\in C\left(\left[0,1\right]\right)$. For
functions of this form,
\[
\tilde{\psi}\left(k_{1},1,k_{2}\right)=e^{2in\theta_{1}}\xi\left(1\right)e^{2im\theta_{2}}.
\]
Now,
\begin{align}
\frac{V_{\Gamma}}{2}\int_{\left[0,\pi\right]\times\left[0,\pi\right]}e^{2in\theta_{1}}\xi\left(1\right)e^{2im\theta_{2}}\,\d\mu\left(\theta_{1}\right)\d\mu\left(\theta_{2}\right) & =\frac{V_{\Gamma}\xi\left(1\right)}{2}\int_{0}^{\pi}e^{2in\theta_{1}}\,\d\mu\left(\theta_{1}\right)\int_{0}^{\pi}e^{2im\theta_{2}}\d\mu\left(\theta_{2}\right)\nonumber \\
 & =\frac{1}{2}V_{\Gamma}\xi\left(1\right)\hat{\mu}\left(2n\right)\hat{\mu}\left(2m\right)\label{eq:Integral-to-Fourier-coeff}
\end{align}
with $\hat{\mu}$ defines as in \cite{bourgain2010sector} (see \ref{sec:Sector-estimates}).
For $\gamma\in S_{T}$,
\[
e^{\frac{kt\left(\gamma\right)}{2}}=\norm{\rho\left(\gamma\right)}\ge T
\]
Therefore, as $T\rightarrow\infty$, 
\[
r\left(\gamma\right)\equiv r\left(t\left(\gamma\right)\right)\rightarrow1
\]
 uniformly over all $\gamma\in S_{T}$. Since $\xi$ is continuous,
$\xi\left(r\left(\gamma\right)\right)\rightarrow\xi\left(1\right)$
uniformly on $S_{T}$ as $T\rightarrow\infty$. Therefore,
\begin{align*}
\lim_{T\rightarrow\infty}\frac{1}{M_{T}}\sum_{\gamma\in\overline{S}_{T}}\psi\left(\gamma\right) & =\lim_{T\rightarrow\infty}\frac{1}{M_{T}}\sum_{\gamma\in\overline{S}_{T}}e^{2in\theta_{1}\left(g\right)}\xi\left(1\right)e^{2im\theta_{2}}.
\end{align*}
Put $\left|\gamma\right|=e^{\frac{1}{2}\d\left(0,\gamma.0\right)}$
as before. By theorem \ref{thm:BKS},
\begin{equation}
\lim_{T\rightarrow\infty}\frac{1}{T^{2\delta}}\sum_{\left|\gamma\right|\le T}e^{2in\theta_{1}\left(\gamma\right)}e^{2im\theta_{2}\left(\gamma\right)}=\frac{V_{\Gamma}}{2}\hat{\mu}\left(2n\right)\hat{\mu}\left(2m\right).\label{eq:(B-K-S)-internal-reference}
\end{equation}
By the definition of $S_{T}$, we have $\overline{S}_{T}=\left\{ \gamma\in\overline{\Gamma}|T<\left|\gamma\right|^{k}\le\left(1+\beta\right)T\right\} .$Hence,
\[
\sum_{\gamma\in\overline{S}_{T}}\psi\left(\gamma\right)=\sum_{\left|\gamma\right|\le\left(\left(1+\beta\right)T\right)^{\frac{1}{k}}}e^{2in\theta_{1}\left(\gamma\right)}\xi\left(1\right)e^{2im\theta_{2}\left(\gamma\right)}-\sum_{\left|\gamma\right|\le T^{\frac{1}{k}}}e^{2in\theta_{1}\left(\gamma\right)}\xi\left(1\right)e^{2im\theta_{2}\left(\gamma\right)}.
\]
Therefore, by \ref{eq:(B-K-S)-internal-reference},
\begin{align*}
\lim_{T\rightarrow\infty}\frac{1}{M_{T}}\sum_{\gamma\in\overline{S}_{T}}\psi\left(\gamma\right) & =\lim_{T\rightarrow\infty}\frac{1}{T^{\frac{2\delta}{k}}\left(\left(1+\beta\right)^{\frac{2\delta}{k}}-1\right)}\sum_{\gamma\in\overline{S}_{T}}\psi\left(\gamma\right)\\
 & =\lim_{T\rightarrow\infty}\frac{1}{T^{\frac{2\delta}{k}}\left(\left(1+\beta\right)^{\frac{2\delta}{k}}-1\right)}\sum_{\left|\gamma\right|\le\left(\left(1+\beta\right)T\right)^{\frac{1}{k}}}e^{2in\theta_{1}\left(\gamma\right)}\xi\left(1\right)e^{2im\theta_{2}\left(\gamma\right)}\\
 & -\lim_{T\rightarrow\infty}\frac{1}{T^{\frac{2\delta}{k}}\left(\left(1+\beta\right)^{\frac{2\delta}{k}}-1\right)}\sum_{\left|\gamma\right|\le T^{\frac{1}{k}}}e^{2in\theta_{1}\left(\gamma\right)}\xi\left(1\right)e^{2im\theta_{2}\left(\gamma\right)}\\
 & =\frac{V_{\Gamma}\xi\left(1\right)}{2}\hat{\mu}\left(2n\right)\hat{\mu}\left(2m\right).
\end{align*}
Thus, by \ref{eq:Integral-to-Fourier-coeff},
\begin{align*}
\lim_{T\rightarrow\infty}\frac{1}{M_{T}}\sum_{\gamma\in\overline{S}_{T}}\psi\left(\gamma\right) & =\frac{V_{\Gamma}}{2}\int_{\left[0,\pi\right]\times\left[0,\pi\right]}e^{2in\theta_{1}}\xi\left(1\right)e^{2im\theta_{2}}\,\d\mu\left(\theta_{1}\right)\d\mu\left(\theta_{2}\right)\\
 & =\frac{V_{\Gamma}}{2}\int_{\left[0,\pi\right]\times\left[0,\pi\right]}\tilde{\psi}\left(\theta_{1},1,\theta_{2}\right)\,\d\mu\left(\theta_{1}\right)\d\mu\left(\theta_{2}\right).
\end{align*}
\end{proof}
\begin{cor}
\label{cor:convergence-h}For $h$ defined in \ref{eq:definition-of-h},
\[
\lim_{T\rightarrow\infty}\frac{1}{M_{T}}\sum_{\gamma\in\overline{S}_{T}}h\left(\gamma\right)=\frac{1}{8}V_{\Gamma}\int_{\left[0,2\pi\right]\times\left[0,2\pi\right]}f\left(\rho\left(k_{\theta_{1}}\right)\cdot aP_{k}\cdot\rho\left(k_{\theta_{2}}\right)\right)\,\d\mu\left(\theta_{1}\right)\d\mu\left(\theta_{2}\right).
\]
\end{cor}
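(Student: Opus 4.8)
The plan is to obtain the corollary as an immediate application of Proposition \ref{prop:annuli_continuous} to the function $\psi = h$, followed by a change of the domain of integration from $[0,\pi]^{2}$ to $[0,2\pi]^{2}$. First I would check that $h$ meets the hypothesis of the proposition: this is exactly the content of Lemma \ref{lem:extension-of-h}, which guarantees that $\tilde h(\theta_{1}, r, \theta_{2}) = h(k_{\theta_{1}} a_{t(r)} k_{\theta_{2}})$ extends continuously to $[0,\pi]\times[0,1]\times[0,\pi]$, with boundary values recorded in \ref{eq:boundary-value-of-h}. Applying the proposition with $\psi = h$ and $\tilde\psi = \tilde h$ then yields
\[
\lim_{T\to\infty}\frac{1}{M_{T}}\sum_{\gamma\in\overline{S}_{T}}h(\gamma) = \frac{1}{2}V_{\Gamma}\int_{[0,\pi]\times[0,\pi]} f\bigl(\rho(k_{\theta_{1}})\cdot aP_{k}\cdot\rho(k_{\theta_{2}})\bigr)\,\d\mu(\theta_{1})\,\d\mu(\theta_{2}),
\]
where on $[0,\pi]$ the measure $\mu$ is the Patterson--Sullivan measure $\mu_{o,o}$.

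It then remains to pass from this integral over $[0,\pi]^{2}$ to the integral over $[0,2\pi]^{2}$ appearing in the statement, absorbing the ratio of prefactors $\tfrac{1}{8}/\tfrac{1}{2} = \tfrac14$. The key observation is that the integrand $F(\theta_{1},\theta_{2}) = f\bigl(\rho(k_{\theta_{1}})\cdot aP_{k}\cdot\rho(k_{\theta_{2}})\bigr)$ is invariant, separately in each variable, under the half-period shift $\theta_{i}\mapsto\theta_{i}+\pi$. To see this I would use that $k_{\theta+\pi} = -k_{\theta}$ in $\sl_{2}(\real)$, so that $\rho(k_{\theta+\pi}) = \rho(-1)\rho(k_{\theta})$; that $-1$ is central in $\sl_{2}(\real)$, so $\rho(-1)$ commutes with every $\rho(k_{\theta})$; that $\rho(-1)$ commutes with $P_{k}$, since $\rho(-1)$ acts by the scalar $(-1)^{k}$ on the range of $P_{k}$ and preserves its kernel; and finally that, in the reduced setting of this section, $f$ is invariant under $x\mapsto\rho(-1)x$. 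Combining these gives both $F(\theta_{1}+\pi,\theta_{2}) = F(\theta_{1},\theta_{2})$ and $F(\theta_{1},\theta_{2}+\pi) = F(\theta_{1},\theta_{2})$.

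Finally I would invoke the definition of $\mu$ from Remark \ref{rem:symmetric}: its restriction to $[0,\pi]$ is $\mu_{o,o}$, and its restriction to $[\pi,2\pi]$ is the push-forward of $\mu_{o,o}$ under $\theta\mapsto\theta+\pi$. For any function $G$ invariant under $\theta\mapsto\theta+\pi$ this yields $\int_{[0,2\pi]} G\,\d\mu = 2\int_{[0,\pi]} G\,\d\mu_{o,o}$, and applying this in each of the two variables to the invariant integrand $F$ produces the factor $4$, i.e.
\[
\int_{[0,\pi]\times[0,\pi]} F\,\d\mu\,\d\mu = \frac14\int_{[0,2\pi]\times[0,2\pi]} F\,\d\mu\,\d\mu .
\]
Substituting this into the displayed limit turns the prefactor $\tfrac12 V_{\Gamma}$ into $\tfrac18 V_{\Gamma}$ and gives the claimed identity. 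I expect the only genuinely delicate point to be the bookkeeping of this factor of $4$, together with the verification that $\rho(-1)$ commutes with $P_{k}$; everything else is a direct citation of the proposition and the preceding lemma.
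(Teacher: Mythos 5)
Your proposal is correct and follows the paper's proof exactly: cite Lemma \ref{lem:extension-of-h} to verify the hypothesis of Proposition \ref{prop:annuli_continuous}, apply the proposition with the boundary values from \ref{eq:boundary-value-of-h}, and then use the evenness of $f$ under $\rho(-1)$ together with the symmetry of $\mu$ from Remark \ref{rem:symmetric} to convert the integral over $\left[0,\pi\right]^{2}$ into $\tfrac{1}{4}$ of the integral over $\left[0,2\pi\right]^{2}$. Your write-up actually justifies the half-period invariance of the integrand (centrality of $-1$, commutation of $\rho(-1)$ with $P_{k}$) more explicitly than the paper, which simply asserts the step "since $f$ is even under $\rho(-1)$".
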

\begin{proof}
Lemma \ref{lem:extension-of-h} shows that $\tilde{h}$ can be extended
to a continuous function on $\left[0,\pi\right]\times\left[0,1\right]\times\left[0,\pi\right]$.
This means that $h$ satisfies the conditions of proposition \ref{prop:annuli_continuous}.
Consequently,
\begin{align*}
\lim_{T\rightarrow\infty}\frac{1}{M_{T}}\sum_{\gamma\in\overline{S}_{T}}h\left(\gamma\right) & =\frac{1}{2}V_{\Gamma}\int_{\left[0,\pi\right]\times\left[0,\pi\right]}\tilde{h}\left(\theta_{1},1,\theta_{2}\right)\,\d\mu\left(\theta_{1}\right)\d\mu\left(\theta_{2}\right).
\end{align*}
From \ref{eq:boundary-value-of-h} we see that the values of $\tilde{h}$
for $r=1$ are given by 
\[
\tilde{h}\left(\theta_{1},1,\theta_{2}\right)=f\left(\rho\left(k_{\theta_{1}}\right)\cdot aP_{k}\cdot\rho\left(k_{\theta_{2}}\right)\right).
\]
Therefore, since $f$ is even under $\rho\left(-1\right)$,
\begin{align*}
\lim_{T\rightarrow\infty}\frac{1}{M_{T}}\sum_{\gamma\in\overline{S}_{T}}h\left(\gamma\right) & =\frac{1}{2}V_{\Gamma}\int_{\left[0,\pi\right]\times\left[0,\pi\right]}f\left(\rho\left(k_{\theta_{1}}\right)\cdot aP_{k}\cdot\rho\left(k_{\theta_{2}}\right)\right)\,\d\mu\left(\theta_{1}\right)\d\mu\left(\theta_{2}\right)\\
 & =\frac{1}{8}V_{\Gamma}\int_{\left[0,2\pi\right]\times\left[0,2\pi\right]}f\left(\rho\left(k_{\theta_{1}}\right)\cdot aP_{k}\cdot\rho\left(k_{\theta_{2}}\right)\right)\,\d\mu\left(\theta_{1}\right)\d\mu\left(\theta_{2}\right).
\end{align*}
\end{proof}
\begin{cor}
\label{cor:Annuli}For $j=1,\dots,N-1$,
\[
\lim_{T\rightarrow\infty}\nu_{T,N,j}\left(f\right)=\frac{1}{4}V_{\Gamma}\int_{K\times K}f\left(\rho\left(k_{1}\right)\cdot\frac{j}{N}P_{k}\cdot\rho\left(k_{2}\right)\right)\,\d\mu\left(k_{1}\right)\d\mu\left(k_{2}\right).
\]
\end{cor}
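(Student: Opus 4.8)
The plan is to recognize that each average $\nu_{T,N,j}$ is simply a member of the family $\lambda_{T}$ analyzed in the general setting, after a reparametrization, so that the corollary follows at once from Corollary \ref{cor:convergence-h}. Fix $j$ with $1\le j\le N-1$ and put
\[
a=\frac{j}{N},\qquad \beta=\frac{1}{j},\qquad T'=\frac{j}{N}T.
\]
First I would check that under this substitution the annulus $S_{T,j}$ coincides with the set $S_{T'}$ from the general setting. Indeed $(1+\beta)T'=\left(1+\tfrac{1}{j}\right)\tfrac{j}{N}T=\tfrac{j+1}{N}T$, so the two conditions $\tfrac{j}{N}T<\norm{\rho(\gamma)}\le\tfrac{j+1}{N}T$ and $T'<\norm{\rho(\gamma)}\le(1+\beta)T'$ cut out the same subset of $\Gamma$. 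The normalizing constants match as well:
\[
M_{T'}=(T')^{\frac{2\delta}{k}}\left((1+\beta)^{\frac{2\delta}{k}}-1\right)=\left(\frac{T}{N}\right)^{\frac{2\delta}{k}}\left((j+1)^{\frac{2\delta}{k}}-j^{\frac{2\delta}{k}}\right)=M_{T,j}.
\]
With $a=j/N$ this gives the identification $\nu_{T,N,j}(f)=\lambda_{T'}(f)$.

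Second, I would invoke relation \ref{eq:alpha-h}, which rewrites the sum over $S_{T'}\subset\Gamma$ as twice a sum over its image $\overline{S}_{T'}$ in $\psl_{2}(\real)$ of the function $h$ attached to $f$ and $a$ by \ref{eq:definition-of-h}, namely $\lambda_{T'}(f)=\tfrac{2}{M_{T'}}\sum_{\gamma\in\overline{S}_{T'}}h(\gamma)$. Since $T\to\infty$ forces $T'=\tfrac{j}{N}T\to\infty$, Corollary \ref{cor:convergence-h} applies with this fixed value of $a$ and yields
\[
\lim_{T\to\infty}\frac{1}{M_{T'}}\sum_{\gamma\in\overline{S}_{T'}}h(\gamma)=\frac{1}{8}V_{\Gamma}\int_{[0,2\pi]\times[0,2\pi]}f\left(\rho(k_{\theta_1})\cdot\tfrac{j}{N}P_{k}\cdot\rho(k_{\theta_2})\right)\,\d\mu(\theta_1)\,\d\mu(\theta_2).
\]
Multiplying by the factor $2$ coming from \ref{eq:alpha-h} turns the $\tfrac{1}{8}$ into $\tfrac{1}{4}$, and rewriting the integral over $[0,2\pi]\times[0,2\pi]$ against $\mu\times\mu$ as an integral over $K\times K$ (using the identification of $K$ with $[0,2\pi]$ from remark \ref{rem:symmetric}) produces exactly the asserted formula.

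There is essentially no analytic content left at this stage; all of the substance sits in Corollary \ref{cor:convergence-h} and Theorem \ref{thm:BKS}. The points that need care are purely bookkeeping: that $j$ is held fixed while $T\to\infty$, so that $\beta=1/j$ is a genuine fixed parameter to which the limit of the general setting applies (no uniformity in $\beta$ is required, only one value per annulus); that $h$ satisfies the hypotheses of Proposition \ref{prop:annuli_continuous}, which is precisely what Lemma \ref{lem:extension-of-h} guarantees via the continuous extension $\tilde{h}$; and that one tracks the factor $2$ from the $\sl_{2}(\real)\to\psl_{2}(\real)$ passage together with the normalization constant of Corollary \ref{cor:convergence-h} (itself built from the $\tfrac{1}{2}$ of Proposition \ref{prop:annuli_continuous} and the doubling of each angular integral under the $[0,\pi]\to[0,2\pi]$ symmetrization) to arrive at the constant $\tfrac{1}{4}$. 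Once these are in place the corollary is immediate.
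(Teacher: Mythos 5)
Your argument is correct and is essentially identical to the paper's own proof: the paper likewise identifies $\nu_{T,N,j}$ with $\lambda_{T'}$ via $a=\tfrac{j}{N}$, $\beta=j^{-1}$, $T'=\tfrac{jT}{N}$, passes to $\overline{S}_{T'}$ through \ref{eq:alpha-h}, and applies Corollary \ref{cor:convergence-h}. Your explicit verification that $S_{T,j}=S_{T'}$ and $M_{T,j}=M_{T'}$ under this substitution, and your tracking of the factor $2$ turning $\tfrac{1}{8}$ into $\tfrac{1}{4}$, are exactly the bookkeeping the paper leaves implicit.
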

\begin{proof}
Recall from \ref{eq:alpha-h} that
\[
\frac{2}{M_{T}}\sum_{\gamma\in\overline{S}_{T}}h\left(\gamma\right)=\frac{1}{M_{T}}\sum_{\gamma\in S_{T}}f\left(a\cdot\frac{\rho\left(\gamma\right)}{\norm{\rho\left(\gamma\right)}}\right).
\]
By corollary \ref{cor:convergence-h},
\[
\lim_{T\rightarrow\infty}\frac{1}{M_{T}}\sum_{\gamma\in S_{T}}f\left(a\cdot\frac{\rho\left(\gamma\right)}{\norm{\rho\left(\gamma\right)}}\right)=\frac{1}{4}V_{\Gamma}\int_{K\times K}f\left(\rho\left(k_{1}\right)\cdot aP_{k}\cdot\rho\left(k_{2}\right)\right)\,\d\mu\left(k_{1}\right)\d\mu\left(k_{2}\right).
\]
By setting $T=\frac{T'j}{N}$, $\beta=j^{-1}$, and $a=\frac{j}{N}$
we get the desired result.
\end{proof}

\subsubsection{The case of H\"{o}lder continuous functions\label{subsec:Annuli-Holderof}}

In this subsection, we assume that $f$ is H\"{o}lder continuous with
exponent $\alpha$ and H\"{o}lder constant $\norm f_{\mathrm{Lip}\alpha}$.
The quantitative estimate for the convergence of the annuli averages
$\nu_{T,N,j}\left(f\right)$ will be obtained using the following
proposition.
\begin{prop}
\label{prop:annuli_Holder}Let $h$ be as in \ref{eq:definition-of-h}.
There exists some $T_{0}>0$ such that for every $R\in\nat$, and
$T>T_{0}$ we have:
\begin{align*}
\biggl|\frac{1}{M_{T}}\sum_{\gamma\in\overline{S}_{T}}h\left(\gamma\right) & -\frac{1}{8}V_{\Gamma}\int_{K\times K}f\left(\rho\left(k_{1}\right)\cdot aP_{k}\cdot\rho\left(k_{2}\right)\right)\,\d\mu\left(k_{1}\right)\d\mu\left(k_{2}\right)\biggl|\\
 & \le C_{2}\norm f_{\infty,1}\left(R^{2}T^{\frac{2}{k}\left(s_{1}-\delta\right)}+R^{2}\left(1+2R\right)^{\frac{3}{4}}T^{\frac{1}{4k}\left(1-2\delta\right)}\log\left(T\right)^{\frac{1}{4}}\right)\\
 & +C_{2}\norm f_{\mathrm{Lip}\alpha}\left(R^{-\alpha}\log\left(R\right)+T^{-\alpha}\right)
\end{align*}
for some constant $C_{2}>0$ which does not depend on $f$.
\end{prop}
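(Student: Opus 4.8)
The plan is to prove a quantitative refinement of Proposition \ref{prop:annuli_continuous} and Corollary \ref{cor:convergence-h}, carrying out the same three reductions — passage to the boundary, Fourier expansion in the two $K$-variables, and term-by-term application of Theorem \ref{thm:BKS} — but now tracking every error in terms of the truncation level $R$. Writing $\gamma = k_{\theta_1}a_t k_{\theta_2}$ for $\gamma\in\overline{S}_T$, recall from the proof of Lemma \ref{lem:extension-of-h} that the boundary value of $h$ is $F(\theta_1,\theta_2):=f(\rho(k_{\theta_1})\cdot aP_k\cdot\rho(k_{\theta_2}))$. The first step is to replace $h(\gamma)$ by $F(\theta_1(\gamma),\theta_2(\gamma))$. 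Since $\rho(K)$ acts by isometries and $a\le1$, the H\"older hypothesis gives
\[
\left|h(\gamma) - F(\theta_1(\gamma),\theta_2(\gamma))\right| \le \norm f_{\mathrm{Lip}\alpha}\,\norm{\tfrac{\rho(a_t)}{\norm{\rho(a_t)}} - P_k}^{\alpha},
\]
and $\norm{\tfrac{\rho(a_t)}{\norm{\rho(a_t)}} - P_k}\le Ce^{-t/2}$, since after normalization every weight below $k$ is damped by at least $e^{-t/2}$. For $\gamma\in S_T$ one has $e^{kt/2}=\norm{\rho(\gamma)}>T$, hence $t>\tfrac{2}{k}\log T$; summing over $\overline{S}_T$ and dividing by $M_T$ (using that $|\overline{S}_T|/M_T$ is bounded, by \ref{eq:M-Tj-is-thenumber}) produces the second H\"older error term, of the form $\o(\norm f_{\mathrm{Lip}\alpha}T^{-\alpha})$, the precise power of $T$ being governed by the weight gap in $e^{-t/2}$ and the lower bound on $t$.

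It then remains to compare $\frac{1}{M_T}\sum_{\gamma\in\overline{S}_T}F(\theta_1(\gamma),\theta_2(\gamma))$ with $\frac{1}{8}V_\Gamma\int_{K\times K}F\,\d\mu\,\d\mu$. Because $\theta\mapsto\rho(k_\theta)$ is real-analytic, $F$ is $\alpha$-H\"older on $[0,\pi]^2$ with constant $\lesssim\norm f_{\mathrm{Lip}\alpha}$, and since it is even under $\rho(-1)$ its Fourier expansion uses only the characters $e^{2in\theta_1}e^{2im\theta_2}$. I would truncate at level $R$, writing $F = S_RF + (F-S_RF)$ with $S_RF=\sum_{|n|,|m|\le R}\hat F(n,m)e^{2in\theta_1}e^{2im\theta_2}$. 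By Jackson's theorem together with the logarithmic growth of the Lebesgue constant of the (product) Dirichlet kernel, $\norm{F-S_RF}_\infty\lesssim\norm f_{\mathrm{Lip}\alpha}R^{-\alpha}\log R$; applying this both to the sum over $\overline{S}_T$ (bounded count) and to the tail $\int(F-S_RF)\,\d\mu\,\d\mu$ of the target integral yields the first H\"older error term $\o(\norm f_{\mathrm{Lip}\alpha}R^{-\alpha}\log R)$.

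For the trigonometric polynomial $S_RF$ I would apply Theorem \ref{thm:BKS} mode by mode, exactly as in Proposition \ref{prop:annuli_continuous}, using $\sum_{\gamma\in\overline{S}_T}=\sum_{|\gamma|\le((1+\beta)T)^{1/k}}-\sum_{|\gamma|\le T^{1/k}}$ and the identity $\norm{\rho(\gamma)}=|\gamma|^k$. For each frequency the leading terms reassemble, after dividing by $M_T=T^{\frac{2\delta}{k}}((1+\beta)^{\frac{2\delta}{k}}-1)$, into $\frac{V_\Gamma}{2}\sum_{|n|,|m|\le R}\hat F(n,m)\hat\mu(2n)\hat\mu(2m)=\frac{V_\Gamma}{2}\int_{[0,\pi]^2}S_RF\,\d\mu\,\d\mu$, which matches the target up to the already-controlled truncation error. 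The secondary spectral terms $c_i(n,m)T^{2s_i/k}$ contribute, after normalization, a bound $\lesssim R^2 T^{\frac{2}{k}(s_1-\delta)}$, using $s_1=\max_{i\ge1}s_i<\delta$ and $|c_i(n,m)|\ll|c_i(0,0)|$; and the BKS error term, whose frequency dependence is $(1+|n|+|m|)^{3/4}$, contributes $\lesssim R^2(1+2R)^{3/4}T^{\frac{1}{4k}(1-2\delta)}\log(T)^{1/4}$ after summing over the $\sim R^2$ modes (note that $\tfrac1k(\tfrac14+\tfrac{3\delta}{2})-\tfrac{2\delta}{k}=\tfrac{1}{4k}(1-2\delta)$). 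Collecting the four contributions gives the inequality with a constant $C_2$ independent of $f$.

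The main obstacle is the simultaneous bookkeeping of the spectral errors across all $\sim R^2$ modes while passing through the radial change of variables $T\mapsto T^{1/k}$: one must check that the frequency dependence $(1+|n|+|m|)^{3/4}$ and the uniform comparison $|c_i(n,m)|\ll|c_i(0,0)|$ of Theorem \ref{thm:BKS} combine into exactly the stated powers of $R$ and $T$, and that both competing errors — the Fourier truncation error decreasing in $R$ and the spectral error increasing in $R$ — are retained so that $R$ can later be optimized against $T$. A secondary technical point is obtaining the $R^{-\alpha}\log R$ approximation bound for the two-variable H\"older function $F$ with only a single logarithm, which requires some care with the product Dirichlet kernel.
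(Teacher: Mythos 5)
Your proposal follows essentially the same route as the paper: pass to the boundary value $F(\theta_1,\theta_2)=f(\rho(k_{\theta_1})\cdot aP_k\cdot\rho(k_{\theta_2}))$ with a H\"older error controlled by $e^{-t/2}$, approximate by a trigonometric polynomial of degree $R$ with coefficients bounded by $\norm f_{\infty,1}$, apply Theorem \ref{thm:BKS} mode by mode to the $\sim R^2$ frequencies, and reassemble the main term as $\tfrac{1}{8}V_\Gamma\int_{K\times K}F\,\d\mu\,\d\mu$ up to the truncation error; all four error contributions match the stated bound. The one point of divergence is exactly the technical issue you flag: the paper avoids the $(\log R)^2$ Lebesgue constant of the product Dirichlet kernel by using the Fej\'er means $\sigma_R$ instead of the partial sums $S_RF$ --- positivity of the Fej\'er kernel together with $\max\{|u|,|v|\}^\alpha\le|u|^\alpha+|v|^\alpha$ reduces the two-variable estimate to the one-variable one and yields $R^{-\alpha}\log R$ directly (see the appendix), while still keeping the coefficients bounded by $\sup|\varphi|$ so the mode-by-mode argument goes through unchanged.
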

\begin{rem}
The parameter $R$ in the proposition will measure the degree of the
trigonometric approximation of $h$. After computing the overall error
in the proof of the main theorem, $R$ will be chosen (depending on
$T$) in an optimal way to make the error small.

Before we turn to the proof of proposition \ref{prop:annuli_Holder},
we draw the conclusion for the convergence of the averages $\nu_{T,N,j}\left(f\right)$.
\end{rem}
\begin{cor}
\label{cor:annuli-holder-N} There exists a $T_{0}>0$ and $C_{2}>0$
which does not depend on $f$, such that for $T>T_{0}N$ and for all
$R\in\nat$,
\begin{align*}
\biggl|\nu_{T,N,j}\left(f\right) & -\frac{1}{4}V_{\Gamma}\int_{K\times K}f\left(\rho\left(k_{1}\right)\cdot\frac{j}{N}P_{k}\cdot\rho\left(k_{2}\right)\right)\,\d\mu\left(k_{1}\right)\d\mu\left(k_{2}\right)\biggl|\\
 & \le C_{2}\norm f_{\infty,1}\left(R^{2}\left(\frac{Tj}{N}\right)^{\frac{2}{k}\left(s_{1}-\delta\right)}+R^{2}\left(1+2R\right)^{\frac{3}{4}}\left(\frac{Tj}{N}\right)^{\frac{1}{4k}\left(1-2\delta\right)}\log\left(T\right)^{\frac{1}{4}}\right)\\
 & +C_{2}\norm f_{\mathrm{Lip}\alpha}\left(R^{-\alpha}\log\left(R\right)+\left(\frac{Tj}{N}\right)^{-\alpha}\right).
\end{align*}
for all $1\le j\le N$.
\end{cor}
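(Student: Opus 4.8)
The plan is to obtain this corollary from Proposition \ref{prop:annuli_Holder} by the same specialization of parameters that was used to pass from Corollary \ref{cor:convergence-h} to Corollary \ref{cor:Annuli}. Concretely, in the general annulus setup I would put $T'=\frac{Tj}{N}$, $\beta=\frac{1}{j}$, and $a=\frac{j}{N}$, and then read off the resulting estimate.

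First I would verify that under this choice the data of Proposition \ref{prop:annuli_Holder} reproduce the data defining $\nu_{T,N,j}$. With $T'=\frac{Tj}{N}$ and $\beta=\frac1j$ the annulus is
\[
S_{T'}=\Big\{\gamma\in\Gamma\,\Big|\,\tfrac{j}{N}T<\norm{\rho\left(\gamma\right)}\le\tfrac{j+1}{N}T\Big\}=S_{T,j},
\]
and the normalization is
\[
M_{T'}=\left(T'\right)^{\frac{2\delta}{k}}\big((1+\beta)^{\frac{2\delta}{k}}-1\big)=\Big(\tfrac{T}{N}\Big)^{\frac{2\delta}{k}}\big((j+1)^{\frac{2\delta}{k}}-j^{\frac{2\delta}{k}}\big)=M_{T,j}.
\]
Hence \ref{eq:alpha-h}, applied with these parameters and with $h$ the function of \ref{eq:definition-of-h} for $a=\frac{j}{N}$, gives $\nu_{T,N,j}\left(f\right)=\frac{2}{M_{T'}}\sum_{\gamma\in\overline{S}_{T'}}h\left(\gamma\right)$.

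It then remains to insert $T'=\frac{Tj}{N}$ into the bound of Proposition \ref{prop:annuli_Holder}. Since that proposition controls the difference between $\frac{1}{M_{T'}}\sum_{\gamma\in\overline{S}_{T'}}h\left(\gamma\right)$ and $\frac18 V_{\Gamma}\int_{K\times K}f\left(\rho\left(k_1\right)\cdot aP_k\cdot\rho\left(k_2\right)\right)\,\d\mu\left(k_1\right)\d\mu\left(k_2\right)$, multiplying by $2$ produces exactly the claimed estimate for $\nu_{T,N,j}\left(f\right)$ against the main term $\frac14 V_{\Gamma}\int_{K\times K}f\left(\rho\left(k_1\right)\cdot\frac{j}{N}P_k\cdot\rho\left(k_2\right)\right)\,\d\mu\left(k_1\right)\d\mu\left(k_2\right)$; the constant $2$ is absorbed into $C_2$. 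The hypothesis $T'>T_0$ of the proposition becomes $\frac{Tj}{N}>T_0$, which is guaranteed for every $j\ge1$ by the assumption $T>T_0N$, so the estimate holds uniformly in $j$. Finally, since $j\le N$ we have $\frac{Tj}{N}\le T$ and therefore $\log\left(\frac{Tj}{N}\right)^{\frac14}\le\log\left(T\right)^{\frac14}$, which allows replacing the factor $\log\left(T'\right)^{\frac14}$ by $\log\left(T\right)^{\frac14}$ at the cost of enlarging $C_2$.

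All of the analytic content --- the trigonometric approximation of $h$ and the input from the sector estimates of Theorem \ref{thm:BKS} --- is already contained in Proposition \ref{prop:annuli_Holder}, so the proof of the corollary is pure bookkeeping. The only points needing attention are the verification that the two normalizations $M_{T'}$ and $M_{T,j}$ coincide and that both the threshold $T>T_0N$ and the logarithmic factor can be made uniform over $1\le j\le N$; once these are confirmed the stated bound follows immediately.
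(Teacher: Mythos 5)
Your proposal is correct and is essentially identical to the paper's own (very brief) proof: apply Proposition \ref{prop:annuli_Holder} with $a=\frac{j}{N}$, $\beta=\frac{1}{j}$, $T'=\frac{Tj}{N}$, and note that $T>T_{0}N$ ensures $\frac{Tj}{N}>T_{0}$ for all $j\ge1$. Your additional verifications that $M_{T'}=M_{T,j}$, that the factor of $2$ from \ref{eq:alpha-h} is absorbed into $C_{2}$, and that $\log\left(\frac{Tj}{N}\right)^{\frac{1}{4}}\le\log\left(T\right)^{\frac{1}{4}}$ are all correct and in fact make the argument more complete than the one-line proof in the paper.
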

\begin{proof}
By applying \ref{prop:annuli_Holder} to the values $a=\frac{j}{N}$,
$\beta=\frac{1}{j}$ and $T'=\frac{Tj}{N}$ we get the desired inequality
for any $T$ such that $\frac{Tj}{N}>T_{0}$. Since $j\ge1$, it is
enough to take $T>NT_{0}$.
\end{proof}
Our goal in the rest of this section is to prove proposition \ref{prop:annuli_Holder}.
First, we can utilize the H\"{o}lder condition in order to let the radial
part of $h\left(\gamma\right)$ tend to infinity. Indeed, for any
$t\ge0$, the largest possible eigenvalue of $\rho\left(a_{t}\right)$
which is smaller than $e^{\frac{kt}{2}}$ is $e^{\frac{\left(k-1\right)t}{2}}$
. Since $\rho\left(a_{t}\right)$ and $P_{k}$ are both diagonal in
some orthonormal basis,
\begin{equation}
\norm{\frac{\rho_{k}\left(a_{t}\right)}{\norm{\rho\left(a_{t}\right)}}-P_{k}}\le e^{-\frac{t}{2}}.\label{eq:error-boundary}
\end{equation}
Note that this is the case only if $V_{k-1}$ also appears in $V$.
Otherwise, the second largest entry is $e^{\frac{\left(k-2\right)t}{2}}$.
This means that for $\gamma\in S_{T}$ , with $\gamma=k_{1}\rho\left(a_{t}\right)k_{2}$,
\[
\norm{a\frac{\rho\left(\gamma\right)}{\norm{\rho\left(\gamma\right)}}-\rho\left(k_{1}\right)\cdot aP_{k}\cdot\rho\left(k_{2}\right)}=a\cdot\norm{\frac{\rho\left(a_{t}\right)}{\norm{\rho\left(a_{t}\right)}}-P_{k}}\le ae^{-\frac{t}{2}}.
\]
Hence, by the H\"{o}lder property of $f$, for $t>0$ such that $e^{\frac{t}{2}}\ge T$,
we have
\[
\left|h\left(\gamma\right)-f\left(\rho\left(k_{1}\right)\cdot aP_{k}\cdot\rho\left(k_{2}\right)\right)\right|\le\norm f_{\mathrm{Lip}\alpha}T^{-\alpha}.
\]
Recall that we denoted the image of $S_{T}$ in $\psl_{2}\left(\real\right)$
by $\overline{S}_{T}$. Then, for any $T>0$,
\begin{equation}
\left|\frac{1}{M_{T}}\sum_{\gamma\in\overline{S}_{T}}h\left(\gamma\right)-\frac{1}{M_{T}}\sum_{\gamma\in\overline{S}_{T}}f\left(\rho\left(k_{1}\right)\cdot aP_{k}\cdot\rho\left(k_{2}\right)\right)\right|\le\frac{\left|\overline{S}_{T}\right|}{M_{T}}\norm f_{\mathrm{Lip}\alpha}T^{-\alpha}\le C_{\Gamma}\norm f_{\mathrm{Lip}\alpha}T^{-\alpha}.\label{eq:f-to-h-error}
\end{equation}
Define a function $\varphi:\left[0,\pi\right]\times\left[0,\pi\right]\rightarrow\c$
by
\[
\varphi\left(\theta_{1},\theta_{2}\right)=f\left(\rho\left(k_{\theta_{1}}\right)\cdot aP_{k}\cdot\rho\left(k_{\theta_{2}}\right)\right).
\]
We extend $\varphi$ to a function on $\left[0,2\pi\right]\times\left[0,2\pi\right]$
by setting
\[
\varphi\left(\theta_{1}+\pi,\theta_{2}\right)=\varphi\left(\theta_{1},\theta_{2}+\pi\right)=\varphi\left(\theta_{1},\theta_{2}\right).
\]
We denote the extended function by $\varphi$ as well. On $\left[0,2\pi\right]\times\left[0,2\pi\right]\cong\mathbb{T}^{2}$
we fix the metric
\[
\d\left(\left(\theta_{1},\theta_{2}\right),\left(\phi_{1},\phi_{2}\right)\right)=\max\left\{ \left|\theta_{1}-\phi_{1}\right|,\left|\theta_{2}-\phi_{2}\right|\right\} ,\quad\theta_{i},\phi_{i}\in\left[0,2\pi\right].
\]

\begin{lem}
$\varphi:\mathbb{T}^{2}\rightarrow\c$ is H\"{o}lder continuous with exponent
$\alpha$ and constant $b\cdot\norm f_{\mathrm{Lip}\alpha}$, where
$b$ is some constant which depends only on $\rho$ and the norm $\norm .$
on $V$.
\end{lem}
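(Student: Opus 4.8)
The plan is to factor $\varphi$ as the composition $\varphi = f\circ\Phi$, where $\Phi\colon\mathbb{T}^{2}\to\endd\left(V\right)$ is the smooth map $\Phi\left(\theta_{1},\theta_{2}\right)=\rho\left(k_{\theta_{1}}\right)\cdot aP_{k}\cdot\rho\left(k_{\theta_{2}}\right)$, and then to show that $\Phi$ is Lipschitz with a constant depending only on $\rho$ and $\norm .$ (and, crucially, not on $a$). Composing a H\"{o}lder function with a Lipschitz map preserves the exponent and multiplies the constant by the Lipschitz constant raised to the power $\alpha$, which yields the claim. First I would check that the $\pi$-periodic extension of $\varphi$ to $\mathbb{T}^{2}$ is given by the very same closed-form expression, now with $\theta_{1},\theta_{2}\in\left[0,2\pi\right]$. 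Indeed $k_{\theta+\pi}=-k_{\theta}$, so $\rho\left(k_{\theta_{1}+\pi}\right)=\rho\left(-1\right)\rho\left(k_{\theta_{1}}\right)$, and since $f$ is even under $\rho\left(-1\right)$ we get $\varphi\left(\theta_{1}+\pi,\theta_{2}\right)=\varphi\left(\theta_{1},\theta_{2}\right)$, and likewise in the second variable. Hence $\Phi$ is well defined and continuous on all of $\mathbb{T}^{2}$ and $\varphi=f\circ\Phi$ globally.

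Next I would establish the Lipschitz bound for the single-variable map $\theta\mapsto\rho\left(k_{\theta}\right)$. Since $K$ is the one-parameter subgroup generated by $e-f$, we have $k_{\theta}=\exp\left(\theta\left(e-f\right)\right)$ and therefore $\rho\left(k_{\theta}\right)=\exp\left(\theta\rho\left(e-f\right)\right)$, so that $\frac{\d}{\d\theta}\rho\left(k_{\theta}\right)=\rho\left(e-f\right)\rho\left(k_{\theta}\right)$. Because the fixed inner product is $\rho\left(K\right)$-invariant, each $\rho\left(k_{\theta}\right)$ is orthogonal and hence has operator norm $1$; thus $\norm{\frac{\d}{\d\theta}\rho\left(k_{\theta}\right)}\le\norm{\rho\left(e-f\right)}$, and integrating along $\left[\phi,\theta\right]$ gives $\norm{\rho\left(k_{\theta}\right)-\rho\left(k_{\phi}\right)}\le\norm{\rho\left(e-f\right)}\left|\theta-\phi\right|$ for all $\theta,\phi\in\left[0,2\pi\right]$.

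I would then bound $\Phi$ by the standard add-and-subtract estimate. Writing
\[
\Phi\left(\theta_{1},\theta_{2}\right)-\Phi\left(\phi_{1},\phi_{2}\right)=a\,\rho\left(k_{\theta_{1}}\right)P_{k}\bigl(\rho\left(k_{\theta_{2}}\right)-\rho\left(k_{\phi_{2}}\right)\bigr)+a\bigl(\rho\left(k_{\theta_{1}}\right)-\rho\left(k_{\phi_{1}}\right)\bigr)P_{k}\rho\left(k_{\phi_{2}}\right),
\]
and using submultiplicativity of the operator norm together with $\norm{\rho\left(k_{\theta}\right)}=1$, $\norm{P_{k}}=1$ and $a\le1$, the single-variable estimate yields
\[
\norm{\Phi\left(\theta_{1},\theta_{2}\right)-\Phi\left(\phi_{1},\phi_{2}\right)}\le\norm{\rho\left(e-f\right)}\bigl(\left|\theta_{1}-\phi_{1}\right|+\left|\theta_{2}-\phi_{2}\right|\bigr)\le2\norm{\rho\left(e-f\right)}\,\d\bigl(\left(\theta_{1},\theta_{2}\right),\left(\phi_{1},\phi_{2}\right)\bigr).
\]
Applying the H\"{o}lder property of $f$ then gives $\left|\varphi\left(\theta_{1},\theta_{2}\right)-\varphi\left(\phi_{1},\phi_{2}\right)\right|\le\norm{f}_{\mathrm{Lip}\alpha}\norm{\Phi\left(\theta_{1},\theta_{2}\right)-\Phi\left(\phi_{1},\phi_{2}\right)}^{\alpha}\le b\norm{f}_{\mathrm{Lip}\alpha}\,\d\bigl(\left(\theta_{1},\theta_{2}\right),\left(\phi_{1},\phi_{2}\right)\bigr)^{\alpha}$, where one may take $b=\max\left\{ 1,2\norm{\rho\left(e-f\right)}\right\}$, since $\left(2\norm{\rho\left(e-f\right)}\right)^{\alpha}\le b$ for every $\alpha\in\left(0,1\right]$. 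This constant depends only on $\rho$ and the norm $\norm .$, as required.

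There is no serious obstacle here; the only points requiring care are verifying that the $\pi$-periodic extension agrees with the closed-form expression for $\varphi$ (which rests on the evenness of $f$ under $\rho\left(-1\right)$ and on $k_{\theta+\pi}=-k_{\theta}$) and confirming that the Lipschitz constant can be chosen independently of the parameter $a\in\left(0,1\right]$, which follows from the bound $a\le1$ absorbed into the submultiplicative estimate.
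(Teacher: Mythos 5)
Your proof is correct and follows essentially the same route as the paper: both factor $\varphi=f\circ q$ with $q\left(\theta_{1},\theta_{2}\right)=\rho\left(k_{\theta_{1}}\right)\cdot aP_{k}\cdot\rho\left(k_{\theta_{2}}\right)$ and reduce to showing $q$ is Lipschitz with a constant independent of $a$, then compose with the H\"{o}lder bound on $f$. If anything you are more careful than the paper on two points it glosses over --- verifying that the $\pi$-periodic extension is consistent with the closed-form expression (via $k_{\theta+\pi}=-k_{\theta}$ and the evenness of $f$ under $\rho\left(-1\right)$), and taking $b\ge1$ so that the H\"{o}lder constant is genuinely $b\norm f_{\mathrm{Lip}\alpha}$ rather than $b^{\alpha}\norm f_{\mathrm{Lip}\alpha}$ --- while the paper instead bounds the derivative of $q$ by noting its entries are polynomials in $\cos\theta_{i},\sin\theta_{i}$.
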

\begin{proof}
Consider the function $q:\left[0,2\pi\right]\times\left[0,2\pi\right]\rightarrow\endd\left(V\right)$
given by
\[
q\left(\theta_{1},\theta_{2}\right)=\rho\left(k\left(\theta_{1}\right)\right)\cdot aP_{k}\cdot\rho\left(k\left(\theta_{2}\right)\right),\quad\theta_{1},\theta_{2}\in\left[0,2\pi\right].
\]
Then, $\varphi=f\circ q$. As we have seen in \ref{sec:Basic-structure-and-rep=00003DSL2},
matrix elements of dimensional representations of $\sl_{2}\left(\real\right)$
are polynomials in the entries of the elements in $\sl_{2}\left(\real\right)$
of degree smaller than or equal to $\dim V$. Therefore, the elements
of the matrix $q\left(\theta_{1},\theta_{2}\right)\in\endd\left(V\right)$
are polynomials in $\cos\theta_{i},\sin\theta_{i}$, $i=1,2$ of degree
at most $\dim\left(V\right)$. The derivative of $q\left(\theta_{1},\theta_{2}\right)$
is then bounded by some constant $b$ which depends on $\rho$ and
the norm we have fixed. Consequently, $q$ is Lipschitz continuous
with Lipschitz constant $b$. Given that $f$ is $\left(\norm f_{\mathrm{Lip}\alpha},\alpha\right)$-
H\"{o}lder, $\varphi$ is H\"{o}lder continuous with exponent $\alpha$ and
constant $b\norm f_{\mathrm{Lip}\alpha}$ as the composition $f\circ q$. 
\end{proof}
We now turn to the proof of proposition \ref{prop:annuli_Holder}.
As we have seen in \ref{eq:f-to-h-error},
\begin{equation}
\left|\frac{1}{M_{T}}\sum_{\gamma\in\overline{S}_{T}}h\left(\gamma\right)-\frac{1}{M_{T}}\sum_{\gamma\in\overline{S}_{T}}\varphi\left(\theta_{1}\left(\gamma\right),\theta_{2}\left(\gamma\right)\right)\right|\le C_{\Gamma}\norm f_{\mathrm{Lip}\alpha}T^{-\alpha}.\label{eq:h-to-phi-error}
\end{equation}
We will study the averages $\frac{1}{M_{T}}\sum_{\gamma\in\overline{S}_{T}}\varphi\left(\theta_{1}\left(\gamma\right),\theta_{2}\left(\gamma\right)\right)$
by using the quantitative result in theorem \ref{thm:BKS}. In order
to do that, we need to quantify the approximation of $\varphi$ by
trigonometric polynomials. 

We will approximate $\varphi$ by its Fej\'{e}r means. The $R$-th Fej\'{e}r
mean of $\varphi$ will be denoted by $\sigma_{R}$. It is defined
by
\[
\sigma_{R}\left(\theta_{1},\theta_{2}\right)=\frac{1}{4\pi^{2}}\int_{\mathbb{T}^{2}}\varphi\left(\theta_{1}-u,\theta_{2}-v\right)F_{R}\left(u,v\right)\,\d u\d v,
\]
Where,
\[
F_{R}\left(u,v\right)=F_{R}\left(u\right)F_{R}\left(v\right)=\frac{1}{R}\left(\frac{\sin\left(\frac{Ru}{2}\right)}{\sin\left(\frac{u}{2}\right)}\right)^{2}\frac{1}{R}\left(\frac{\sin\left(\frac{Rv}{2}\right)}{\sin\left(\frac{v}{2}\right)}\right)^{2}
\]
is a product of one dimensional Fej\'{e}r kernels. $\sigma_{R}$ is a
trigonometric polynomial of degree $R$. It is of the form
\[
\sigma_{R}\left(\theta_{1},\theta_{2}\right)=\sum_{n,m=-R}^{R}\varphi_{n,m}e^{2ni\theta_{1}+2mi\theta_{2}},
\]
with $\left|\varphi_{n,m}\right|\le\sup_{\theta_{1},\theta_{2}}\left|\varphi\left(\theta_{1},\theta_{2}\right)\right|$.
Only even frequencies appear in $\sigma_{R}$ since $\varphi$ is
symmetric under $\theta\mapsto\theta+\pi$ in both coordinates. Since
$\varphi$ is H\"{o}lder continuous with exponent $\alpha$ and constant
$b\cdot\norm f_{\mathrm{Lip}\alpha}$, there is some constant $C_{0}>0$
such that for all $R\in\nat$ and for all $\theta_{1},\theta_{2}\in\left[0,2\pi\right]$,
\begin{align}
\left|\varphi\left(\theta_{1},\theta_{2}\right)-\sigma_{R}\left(\theta_{1},\theta_{2}\right)\right|\le & C_{0}b\norm f_{\mathrm{Lip}\alpha}R^{-\alpha}\log R.\label{eq:phi-to-theta}
\end{align}
For the proof, see Appendix \ref{sec:Appendix---Convergence}. We
have,
\begin{align*}
\frac{1}{M_{T}}\sum_{\gamma\in\overline{S}_{T}}\sigma_{R}\left(\theta_{1}\left(\gamma\right),\theta_{2}\left(\gamma\right)\right) & =\frac{1}{M_{T}}\sum_{\gamma\in\overline{S}_{T}}\sum_{n,m=-R}^{R}\varphi_{n,m}e^{2ni\theta_{1}\left(\gamma\right)+2mi\theta_{2}\left(\gamma\right)}\\
 & =\sum_{n,m=-R}^{R}\varphi_{n,m}\frac{1}{M_{T}}\sum_{\gamma\in\overline{S}_{T}}e^{2ni\theta_{1}\left(\gamma\right)+2mi\theta_{2}\left(\gamma\right)}.
\end{align*}
Now, by the definition of $\overline{S}_{T}$,
\[
\frac{1}{M_{T}}\sum_{\gamma\in\overline{S}_{T}}e^{2ni\theta_{1}\left(\gamma\right)+2mi\theta_{2}\left(\gamma\right)}=\frac{1}{T^{\frac{2\delta}{k}}\left(\left(1+\beta\right)^{\frac{2\delta}{k}}-1\right)}\sum_{T^{\frac{1}{k}}\le\left|\gamma\right|\le\left(\left(1+\beta\right)T\right)^{\frac{1}{k}}}e^{2ni\theta_{1}\left(\gamma\right)+2mi\theta_{2}\left(\gamma\right)}.
\]
By theorem \ref{thm:BKS}, there exists a $\tilde{T}_{0}>0$ and a
constant $\tilde{C}_{3}$ which does not depend on $n,m$ such that
for $T'>\tilde{T}_{0}$,
\begin{align*}
\left|\frac{1}{\left(T'\right)^{2\delta}}\sum_{\left|\gamma\right|\le T'}e^{i\left(2n\theta_{1}\left(\gamma\right)+2m\theta_{2}\left(\gamma\right)\right)}-\frac{1}{2}V_{\Gamma}\hat{\mu}\left(2n\right)\hat{\mu}\left(2m\right)\right| & \le\tilde{C}_{3}\left(\left(T'\right)^{2\left(s_{1}-\delta\right)}+\left(T'\right)^{\frac{1}{4}\left(1-2\delta\right)}\log\left(T'\right)^{\frac{1}{4}}\left(1+\left|n\right|+\left|m\right|\right)^{\frac{3}{4}}\right).
\end{align*}
Therefore, taking $T'=T^{\frac{1}{k}}$, and setting $T_{0}=\left(\tilde{T}_{0}\right)^{\frac{1}{k}}$,
we get that there is a constant $C_{3}>0$ such that for $T>T_{0}$,
\begin{align}
\left|\frac{1}{M_{T}}\sum_{\gamma\in\overline{S}_{T}}e^{i\left(2n\theta_{1}\left(\gamma\right)+2m\theta_{2}\left(\gamma\right)\right)}-\frac{1}{2}V_{\Gamma}\hat{\mu}\left(2n\right)\hat{\mu}\left(2m\right)\right| & \leq C_{3}\left(T^{\frac{2}{k}\left(s_{1}-\delta\right)}+T^{\frac{1}{4k}\left(1-2\delta\right)}\log\left(T\right)^{\frac{1}{4}}\left(1+\left|n\right|+\left|m\right|\right)^{\frac{3}{4}}\right)\label{eq:BKS-annuli-quant-bound}
\end{align}
From \ref{eq:BKS-annuli-quant-bound} we see that we can bound the
difference
\begin{equation}
\left|\frac{1}{M_{T}}\sum_{\gamma\in\overline{S}_{T}}\sigma_{R}\left(\theta_{1}\left(\gamma\right),\theta_{2}\left(\gamma\right)\right)-\frac{1}{2}\sum_{n,m=-R}^{R}\varphi_{n,m}V_{\Gamma}\hat{\mu}\left(2n\right)\hat{\mu}\left(2m\right)\right|\label{eq:(difference-annuli)}
\end{equation}
by the sum
\begin{equation}
\sum_{n,m=-R}^{R}\varphi_{n,m}C_{3}\left(T^{\frac{2}{k}\left(s_{1}-\delta\right)}+T^{\frac{1}{4k}\left(1-2\delta\right)}\log\left(T\right)^{\frac{1}{4}}\left(1+\left|n\right|+\left|m\right|\right)^{\frac{3}{4}}\right)\label{eq:difference-annuli-bound-1}
\end{equation}
Since $\varphi_{n,m}$ is bounded by the maximum of $\varphi$ on
$\left[0,2\pi\right]\times\left[0,2\pi\right]$ for any $n,m\in\z$,
it is bounded by $\norm f_{\infty,1}$. Also,
\[
\left(1+\left|n\right|+\left|m\right|\right)^{\frac{3}{4}}\le\left(1+2R\right)^{\frac{3}{4}}.
\]
Therefore, \ref{eq:difference-annuli-bound-1} is bounded by
\[
4R^{2}C_{3}\norm f_{\infty,1}\left(T^{\frac{2}{k}\left(s_{1}-\delta\right)}+\left(1+2R\right)^{\frac{3}{4}}T^{\frac{1}{4k}\left(1-2\delta\right)}\log\left(T\right)^{\frac{1}{4}}\right).
\]
On the other hand, we have:
\begin{align*}
\int_{K\times K}\sigma_{R}\left(\theta_{1},\theta_{2}\right)\,\d\mu\left(\theta_{1}\right)\d\mu\left(\theta_{2}\right) & =\int_{\mathbb{T}^{2}}\sigma_{R}\left(\theta_{1},\theta_{2}\right)\,\d\mu\left(\theta_{1}\right)\d\mu\left(\theta_{2}\right)\\
 & =\int_{\mathbb{T}^{2}}\sum_{n,m=-R}^{R}\varphi_{n,m}e^{2in\theta_{1}+2im\theta_{2}}\,\d\mu\left(\theta_{1}\right)\d\mu\left(\theta_{2}\right)\\
 & =\sum_{n,m=-R}^{R}\varphi_{n,m}\int_{\mathbb{T}^{2}}e^{2in\theta_{1}+2im\theta_{2}}\,\d\mu\left(\theta_{1}\right)\d\mu\left(\theta_{2}\right)\\
 & =\sum_{n,m=-R}^{R}\varphi_{n,m}\left(\int_{0}^{2\pi}e^{2in\theta_{1}}\,\d\mu\left(\theta_{1}\right)\right)\left(\int_{0}^{2\pi}e^{2im\theta_{2}}\,\d\mu\left(\theta_{2}\right)\right).
\end{align*}
Recall that by the definition of the Fourier coefficients of the measure
$\mu$ in \ref{sec:Sector-estimates},
\[
\int_{0}^{2\pi}e^{2in\theta_{1}}\,\d\mu\left(\theta_{1}\right)=2\int_{0}^{\pi}e^{2in\theta_{1}}\,\d\mu\left(\theta_{1}\right)=2\hat{\mu}\left(2n\right).
\]
Then,
\[
\int_{K\times K}\sigma_{R}\left(\theta_{1},\theta_{2}\right)\,\d\mu\left(\theta_{1}\right)\d\mu\left(\theta_{2}\right)=4\sum_{n,m=-R}^{R}\varphi_{n,m}\hat{\mu}\left(2n\right)\hat{\mu}\left(2m\right).
\]
This gives
\[
\frac{1}{8}V_{\Gamma}\int_{K\times K}\sigma_{R}\left(\theta_{1},\theta_{2}\right)\,\d\mu\left(\theta_{1}\right)\d\mu\left(\theta_{2}\right)=\frac{1}{2}\sum_{n,m=-R}^{R}\varphi_{n,m}V_{\Gamma}\hat{\mu}\left(2n\right)\hat{\mu}\left(2m\right).
\]
Recall that for any $\theta_{1},\theta_{2}\in\left[0,2\pi\right]$,
\[
\left|\sigma_{R}\left(\theta_{1},\theta_{2}\right)-\varphi\left(\theta_{1},\theta_{2}\right)\right|\le C_{0}b\norm f_{\mathrm{Lip}\alpha}R^{-\alpha}\log R,
\]
and that, by definition $\varphi\left(\theta_{1},\theta_{2}\right)=f\left(\rho\left(k_{\theta_{1}}\right)\cdot aP_{k}\cdot\rho\left(k_{\theta_{2}}\right)\right).$
Therefore, the difference
\[
\left|\frac{1}{M_{T}}\sum_{\gamma\in\overline{S}_{T}}\varphi\left(\theta_{1}\left(\gamma\right),\theta_{2}\left(\gamma\right)\right)-\frac{1}{8}V_{\Gamma}\int_{K\times K}f\left(\rho\left(k_{1}\right)\cdot aP_{k}\cdot\rho\left(k_{2}\right)\right)\,\d\mu\left(k_{1}\right)\d\mu\left(k_{2}\right)\right|
\]
is bounded by,
\[
\left(C_{0}b+C_{\Gamma}\right)\norm f_{\mathrm{Lip}\alpha}R^{-\alpha}\log R+4R^{2}C_{3}\norm f_{\infty,1}\left(T^{\frac{2}{k}\left(s_{1}-\delta\right)}+\left(1+2R\right)^{\frac{3}{4}}T^{\frac{1}{4k}\left(1-2\delta\right)}\log\left(T\right)^{\frac{1}{4}}\right).
\]
In total, for $T>T_{0}$ 
\begin{align*}
\biggl|\frac{1}{M_{T}}\sum_{\gamma\in\overline{S}_{T}}h\left(\gamma\right) & -\frac{1}{8}V_{\Gamma}\int_{K\times K}f\left(\rho\left(k_{1}\right)\cdot aP_{k}\cdot\rho\left(k_{2}\right)\right)\,\d\mu\left(k_{1}\right)\d\mu\left(k_{2}\right)\biggl|\\
 & \le\left(C_{0}b+C_{\Gamma}\right)\norm f_{\mathrm{Lip}\alpha}R^{-\alpha}\log R\\
 & +4R^{2}C_{3}\norm f_{\infty,1}\left(T^{\frac{2}{k}\left(s_{1}-\delta\right)}+\left(1+2R\right)^{\frac{3}{4}}T^{\frac{1}{4k}\left(1-2\delta\right)}\log\left(T\right)^{\frac{1}{4}}\right)\\
 & +C_{\Gamma}\norm f_{\mathrm{Lip}\alpha}T^{-\alpha}.
\end{align*}
This ends the proof of proposition \ref{prop:annuli_Holder}.

\subsection{The conclusion of the main theorems}

\subsubsection{The main theorem for continuous functions}

We now turn to the proof of theorem \ref{thm:normalized}. We wish
to estimate the difference between $\nu_{T}\left(f\right)$ and $\nu_{\Gamma}\left(f\right)$.
As we have seen, $\nu_{T}\left(f\right)$ can be approximated by the
averages $\nu_{T,N}\left(f\right)$. The first step is to establish
the convergence of $\nu_{T,N}\left(f\right)$ as $T\rightarrow\infty$. 
\begin{lem}
\label{lem:T_infinity_1}For all $N\in\nat$,
\[
\lim_{T\rightarrow\infty}\nu_{T,N}\left(f\right)=\frac{\delta}{2k}V_{\Gamma}\sum_{j=1}^{N-1}\frac{1}{N}\left(\frac{j}{N}\right)^{\frac{2\delta}{k}-1}\int_{K\times K}f\left(\rho\left(k_{1}\right)\cdot\frac{j}{N}P_{k}\cdot\rho\left(k_{2}\right)\right)\,\d\mu\left(k_{1}\right)\d\mu\left(k_{2}\right).
\]
\end{lem}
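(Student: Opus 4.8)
The plan is to exploit the fact that, for a fixed value of $N$, the average $\nu_{T,N}\left(f\right)$ is merely a finite linear combination of the individual annulus averages $\nu_{T,N,j}\left(f\right)$, so that the limit $T\rightarrow\infty$ can be passed through the summation termwise. First I would recall the expression derived in the partition into annuli, namely
\[
\nu_{T,N}\left(f\right)=\frac{2\delta}{k}\sum_{j=1}^{N-1}\left(\frac{1}{N}\right)^{\frac{2\delta}{k}}j^{\frac{2\delta}{k}-1}\nu_{T,N,j}\left(f\right).
\]
Since $N$ is fixed, the index $j$ ranges over the finitely many integers $1,\dots,N-1$, and the coefficients $\frac{2\delta}{k}\left(1/N\right)^{2\delta/k}j^{2\delta/k-1}$ are independent of $T$. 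Consequently the limit as $T\rightarrow\infty$ commutes with this finite sum, and no uniformity in $j$ is required.

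Next I would invoke Corollary \ref{cor:Annuli}, which supplies, for each fixed $j$ with $1\le j\le N-1$, the limit of the normalized annulus average:
\[
\lim_{T\rightarrow\infty}\nu_{T,N,j}\left(f\right)=\frac{1}{4}V_{\Gamma}\int_{K\times K}f\left(\rho\left(k_{1}\right)\cdot\frac{j}{N}P_{k}\cdot\rho\left(k_{2}\right)\right)\,\d\mu\left(k_{1}\right)\d\mu\left(k_{2}\right).
\]
Substituting this termwise into the previous display yields
\[
\lim_{T\rightarrow\infty}\nu_{T,N}\left(f\right)=\frac{2\delta}{k}\cdot\frac{1}{4}V_{\Gamma}\sum_{j=1}^{N-1}\left(\frac{1}{N}\right)^{\frac{2\delta}{k}}j^{\frac{2\delta}{k}-1}\int_{K\times K}f\left(\rho\left(k_{1}\right)\cdot\frac{j}{N}P_{k}\cdot\rho\left(k_{2}\right)\right)\,\d\mu\left(k_{1}\right)\d\mu\left(k_{2}\right).
\]

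Finally I would carry out the elementary simplification of the constants. The numerical prefactor collapses as $\frac{2\delta}{k}\cdot\frac{1}{4}=\frac{\delta}{2k}$, while the power of $N$ and $j$ rearranges to
\[
\left(\frac{1}{N}\right)^{\frac{2\delta}{k}}j^{\frac{2\delta}{k}-1}=\frac{1}{N}\left(\frac{j}{N}\right)^{\frac{2\delta}{k}-1},
\]
which is precisely the weight appearing in the statement of the lemma. There is essentially no analytic obstacle in this step: all of the substantive content—the convergence over each annulus and the identification of the Patterson--Sullivan mass—has already been established in Corollary \ref{cor:Annuli} (resting ultimately on the sector estimates of Theorem \ref{thm:BKS}). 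The only point one must verify is the innocuous one that, for fixed $N$, the sum over $j$ is finite, so that interchanging limit and sum is unconditionally justified. I would remark that this finite-sum structure, together with the Riemann-sum interpretation noted after the definition of $\nu_{T,N}$, is exactly what will let a subsequent $N\rightarrow\infty$ passage produce the integral over $\left[0,1\right]$ in $\nu_{\Gamma}$.
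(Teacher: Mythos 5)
Your proposal is correct and follows exactly the same route as the paper: rewrite $\nu_{T,N}(f)$ as the finite linear combination of the annulus averages $\nu_{T,N,j}(f)$, pass the limit through the finite sum, apply Corollary \ref{cor:Annuli} termwise, and simplify the constant $\frac{2\delta}{k}\cdot\frac{1}{4}=\frac{\delta}{2k}$ together with the weight $\left(\frac{1}{N}\right)^{\frac{2\delta}{k}}j^{\frac{2\delta}{k}-1}=\frac{1}{N}\left(\frac{j}{N}\right)^{\frac{2\delta}{k}-1}$. No discrepancies.
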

\begin{proof}
Recall that
\[
\nu_{T,N}\left(f\right)=\frac{2\delta}{k}\sum_{j=1}^{N-1}\frac{1}{N}\left(\frac{j}{N}\right)^{\frac{2\delta}{k}-1}\nu_{T,N,j}\left(f\right).
\]
Then,
\[
\lim_{T\rightarrow\infty}\nu_{T,N}\left(f\right)=\frac{2\delta}{k}\sum_{j=1}^{N-1}\frac{1}{N}\left(\frac{j}{N}\right)^{\frac{2\delta}{k}-1}\lim_{T\rightarrow\infty}\nu_{T,N,j}\left(f\right).
\]
By corollary \ref{cor:Annuli},
\[
\lim_{T\rightarrow\infty}\nu_{T,N,j}\left(f\right)=\frac{1}{4}V_{\Gamma}\int_{K\times K}f\left(\rho\left(k_{1}\right)\cdot\frac{j}{N}P_{k}\cdot\rho\left(k_{2}\right)\right)\,\d\mu\left(k_{1}\right)\d\mu\left(k_{2}\right).
\]
Therefore, the limit of $\nu_{T,N}$ is given by
\begin{align*}
\lim_{T\rightarrow\infty}\nu_{T,N}\left(f\right) & =\frac{\delta}{2k}V_{\Gamma}\sum_{j=1}^{N-1}\frac{1}{N}\left(\frac{j}{N}\right)^{\frac{2\delta}{k}-1}\int_{K\times K}f\left(\rho\left(k_{1}\right)\cdot\frac{j}{N}P_{k}\cdot\rho\left(k_{2}\right)\right)\,\d\mu\left(k_{1}\right)\d\mu\left(k_{2}\right).
\end{align*}
\end{proof}
We wish to study the right hand side of the limit from lemma \ref{lem:T_infinity_1}
as $N\rightarrow\infty$. We can write the expression in the limit
as the integral $\int_{0}^{1}G_{N}\left(t\right)\,\d t$, where,
\[
G_{N}\left(t\right)=\frac{\delta}{2k}V_{\Gamma}\sum_{j=1}^{N-1}\chi_{\left[\frac{j}{N},\frac{j+1}{N}\right]}\left(t\right)\left(\frac{j}{N}\right)^{\frac{2\delta}{k}-1}\int_{K\times K}f\left(\rho\left(k_{1}\right)\cdot\frac{j}{N}P_{k}\cdot\rho\left(k_{2}\right)\right)\,\d\mu\left(k_{1}\right)\d\mu\left(k_{2}\right),
\]
where $\chi_{\left[a,b\right]}$ is the indicator function of the
interval $\left[a,b\right]$ for $a,b\in\real$. For every $t\in\left(0,1\right)$,
\[
\lim_{N\rightarrow\infty}G_{N}\left(t\right)=\frac{\delta}{2k}V_{\Gamma}t^{\frac{2\delta}{k}-1}\int_{K\times K}f\left(\rho\left(k_{1}\right)\cdot tP_{k}\cdot\rho\left(k_{2}\right)\right)\,\d\mu\left(k_{1}\right)\d\mu\left(k_{2}\right).
\]
The functions $\left\{ G_{N}\right\} _{N\in\nat}$ are uniformly bounded
by
\[
\left|G_{N}\left(t\right)\right|\le\frac{\delta}{2k}V_{\Gamma}t^{\frac{2\delta}{k}-1}\sup_{\norm x\le1}\left|f\left(x\right)\right|\mu\left(K\right)\mu\left(K\right)=\frac{2\delta}{k}V_{\Gamma}t^{\frac{2\delta}{k}-1}\norm f_{\infty,1}.
\]
The function $t^{\frac{2\delta}{k}-1}$ is integrable on $\left[0,1\right]$.
Hence, by the dominant convergence theorem applied to the sequence
$\left\{ G_{N}\right\} _{N\in\nat}$, we get:
\begin{align}
\lim_{N\rightarrow\infty}\int_{0}^{1}G_{N}\left(t\right)\,\d t & =\frac{\delta}{2k}V_{\Gamma}\int_{0}^{1}t^{\frac{2\delta}{k}-1}\left(\int_{K\times K}f\left(\rho\left(k_{1}\right)\cdot tP_{k}\cdot\rho\left(k_{2}\right)\right)\,\d\mu\left(k_{1}\right)\d\mu\left(k_{2}\right)\right)\,\d t\nonumber \\
 & =\frac{\delta}{2k}V_{\Gamma}\int_{K\times\left[0,1\right]\times K}t^{\frac{2\delta}{k}-1}f\left(\rho\left(k_{1}\right)\cdot tP_{k}\cdot\rho\left(k_{2}\right)\right)\,\d\mu\left(k_{1}\right)\d t\d\mu\left(k_{2}\right).\label{eq:G_N-to-infinity}
\end{align}
The right hand side of \ref{eq:G_N-to-infinity} equals $\int f\,\d\nu_{\Gamma}$.
Thus, by lemma \ref{lem:T_infinity_1}, 
\begin{equation}
\lim_{N\rightarrow\infty}\left(\lim_{T\rightarrow\infty}\nu_{T,N}\left(f\right)\right)=\int f\,\d\nu_{\Gamma}.\label{eq:N-to-infinity-cont}
\end{equation}
To finish the proof of theorem \ref{thm:normalized}, recall that
we have seen in \ref{eq:SUM-TO-ANNULI} that there are $N_{2}>0$
and $C_{0}>0$ such that for $N>N_{2}$ and $T>0$,
\[
\left|\nu_{T}\left(f\right)-\nu_{T,N}\left(f\right)\right|\le C_{0}\left(\epsilon+\epsilon^{\frac{2\delta}{k}}\right).
\]
From \ref{eq:N-to-infinity-cont}, there is some $N_{3}>0$ such that
for $N\ge N_{3}$,
\[
\left|\lim_{T\rightarrow\infty}\nu_{T,N}\left(f\right)-\int f\,\d\nu_{\Gamma}\right|<\epsilon.
\]
In particular, the inequality holds for $N_{4}=\max\left\{ N_{3},N_{2}\right\} $.
Let $T_{0}>0$ be such that for all $T>T_{0}$,
\[
\left|\nu_{T,N_{4}}\left(f\right)-\lim_{T\rightarrow\infty}\nu_{T,N_{4}}\left(f\right)\right|<\epsilon.
\]
Then, for all $T>T_{0}$,
\begin{align*}
\left|\nu_{T}\left(f\right)-\int f\,\d\nu_{\Gamma}\right| & \le\left|\nu_{T}\left(f\right)-\nu_{T,N_{4}}\left(f\right)\right|+\left|\nu_{T,N_{4}}\left(f\right)-\lim_{T\rightarrow\infty}\nu_{T,N_{4}}\left(f\right)\right|+\left|\lim_{T\rightarrow\infty}\nu_{T,N_{4}}\left(f\right)-\int f\,\d\nu_{\Gamma}\right|\\
 & <C_{0}\left(\epsilon+\epsilon^{\frac{2\delta}{k}}\right)+\epsilon+\epsilon\le\left(2+C_{0}\right)\left(\epsilon+\epsilon^{\frac{2\delta}{k}}\right).
\end{align*}
This concludes the proof of theorem \ref{thm:normalized}.

\subsubsection{The quantitative theorem for H\"{o}lder continuous functions}

Recall from \ref{eq:Par-to-N-error} that we have
\begin{equation}
\left|\nu_{T}\left(f\right)-\nu_{T,N}\left(f\right)\right|\le C_{1}\norm f_{\infty,1}\left(N^{-\frac{2\delta}{k}}+N^{-1}\log N\right)+\norm f_{\mathrm{Lip}\alpha}C_{\Gamma}N^{-\alpha}.\label{eq:T-to-TN}
\end{equation}
By corollary \ref{cor:annuli-holder-N}, there exists a $T_{0}>0$
such that for $T>NT_{0}$ and $R\in\nat$,
\begin{align*}
\biggl|\nu_{T,N,j}\left(f\right) & -\frac{1}{4}V_{\Gamma}\int_{K\times K}f\left(\rho\left(k_{1}\right)\cdot\frac{j}{N}P_{k}\cdot\rho\left(k_{2}\right)\right)\,\d\mu\left(k_{1}\right)\d\mu\left(k_{2}\right)\biggl|\\
 & \le C_{2}\norm f_{\infty,1}\left(R^{2}\left(\frac{Tj}{N}\right)^{\frac{2}{k}\left(s_{1}-\delta\right)}+R^{2}\left(1+2R\right)^{\frac{3}{4}}\left(\frac{Tj}{N}\right)^{\frac{1}{4k}\left(1-2\delta\right)}\log\left(T\right)^{\frac{1}{4}}\right)\\
 & +C_{2}\norm f_{\mathrm{Lip}\alpha}\left(R^{-\alpha}\log\left(R\right)+\left(\frac{Tj}{N}\right)^{-\alpha}\right).
\end{align*}
Thus, for $T>NT_{0}$,
\begin{align*}
\biggl|\nu_{T,N}\left(f\right) & -\frac{\delta}{2k}V_{\Gamma}\sum_{j=1}^{N-1}\frac{1}{N}\left(\frac{j}{N}\right)^{\frac{2\delta}{k}-1}\int_{K\times K}f\left(\rho\left(k_{1}\right)\cdot\frac{j}{N}P_{k}\cdot\rho\left(k_{2}\right)\right)\,\d\mu\left(k_{1}\right)\d\mu\left(k_{2}\right)\biggl|\\
 & \le\frac{2\delta}{k}\sum_{j=1}^{N-1}\frac{1}{N}\left(\frac{j}{N}\right)^{\frac{2\delta}{k}-1}\left|\nu_{T,N,j}\left(f\right)-\frac{1}{4}V_{\Gamma}\int_{K\times K}f\left(\rho\left(k_{1}\right)\cdot\frac{j}{N}P_{k}\cdot\rho\left(k_{2}\right)\right)\,\d\mu\left(k_{1}\right)\d\mu\left(k_{2}\right)\right|\\
 & \le\frac{2\delta}{k}\sum_{j=1}^{N-1}\frac{1}{N}\left(\frac{j}{N}\right)^{\frac{2\delta}{k}-1}\cdot C_{2}\norm f_{\infty,1}\left(R^{2}\left(\frac{Tj}{N}\right)^{\frac{2}{k}\left(s_{1}-\delta\right)}+R^{2}\left(1+2R\right)^{\frac{3}{4}}\left(\frac{Tj}{N}\right)^{\frac{1}{4k}\left(1-2\delta\right)}\log\left(T\right)^{\frac{1}{4}}\right)\\
 & +\frac{2\delta}{k}\sum_{j=1}^{N-1}\frac{1}{N}\left(\frac{j}{N}\right)^{\frac{2\delta}{k}-1}C_{2}\norm f_{\mathrm{Lip}\alpha}\left(R^{-\alpha}\log\left(R\right)+\left(\frac{Tj}{N}\right)^{-\alpha}\right).
\end{align*}
Since $f$ is H\"{o}lder continuous and $t^{\frac{2\delta}{k}-1}$ is
integrable on $\left[0,1\right]$, the difference between the sum
\[
\sum_{j=1}^{N-1}\frac{1}{N}\left(\frac{j}{N}\right)^{\frac{2\delta}{k}-1}\int_{K\times K}f\left(\rho\left(k_{1}\right)\cdot\frac{j}{N}P_{k}\cdot\rho\left(k_{2}\right)\right)\,\d\mu\left(k_{1}\right)\d\mu\left(k_{2}\right)
\]
and the integral
\[
\int_{0}^{1}t^{\frac{2\delta}{k}-1}\left(\int_{K\times K}f\left(\rho\left(k_{1}\right)\cdot tP_{k}\cdot\rho\left(k_{2}\right)\right)\,\d\mu\left(k_{1}\right)\d\mu\left(k_{2}\right)\right)\d t
\]
is bounded by a constant $C$ times $\norm f_{\mathrm{Lip}\alpha}N^{-\alpha}$.
Thus, for $T>NT_{0}$,
\begin{align*}
\biggl|\nu_{T,N}\left(f\right) & -\frac{\delta}{2k}V_{\Gamma}\int_{K\times\left[0,1\right]\times K}t^{\frac{2\delta}{k}-1}f\left(\rho\left(k_{1}\right)\cdot tP_{k}\cdot\rho\left(k_{2}\right)\right)\,\d t\d\mu\left(k_{1}\right)\d\mu\left(k_{2}\right)\biggl|\\
 & \le C\norm f_{\mathrm{Lip}\alpha}N^{-\alpha}\\
 & +\frac{2\delta}{k}\sum_{j=1}^{N-1}\frac{1}{N}\left(\frac{j}{N}\right)^{\frac{2\delta}{k}-1}\cdot C_{2}\norm f_{\infty,1}\left(R\cdot\left(\frac{Tj}{N}\right)^{\frac{2}{k}\left(s_{1}-\delta\right)}+R\left(1+R\right)^{\frac{3}{4}}\left(\frac{Tj}{N}\right)^{\frac{1}{4k}\left(1-2\delta\right)}\log\left(T\right)^{\frac{1}{4}}\right)\\
 & +\frac{2\delta}{k}\sum_{j=1}^{N-1}\frac{1}{N}\left(\frac{j}{N}\right)^{\frac{2\delta}{k}-1}C_{2}\norm f_{\mathrm{Lip}\alpha}\left(R^{-\alpha}\log\left(R\right)+\left(\frac{Tj}{N}\right)^{-\alpha}\right).
\end{align*}
Since
\[
\int f\,\d\nu_{\Gamma}=\frac{\delta}{2k}V_{\Gamma}\int_{K\times\left[0,1\right]\times K}t^{\frac{2\delta}{k}-1}f\left(\rho\left(k_{1}\right)\cdot tP_{k}\cdot\rho\left(k_{2}\right)\right)\,\d t\d\mu\left(k_{1}\right)\d\mu\left(k_{2}\right)
\]
the inequality above combined with \ref{eq:T-to-TN} gives, for $T>NT_{0}$
and $R\in\nat$,
\begin{align}
\left|\nu_{T}\left(f\right)-\int f\,\d\nu_{\Gamma}\right| & \le C_{1}\norm f_{\infty,1}\left(N^{-\frac{2\delta}{k}}+N^{-1}\log N\right)+\left(C_{\Gamma}+C\right)\norm f_{\mathrm{Lip}\alpha}N^{-\alpha}\nonumber \\
 & +\frac{2\delta}{k}\sum_{j=1}^{N-1}\frac{1}{N}\left(\frac{j}{N}\right)^{\frac{2\delta}{k}-1}\cdot C_{2}\norm f_{\infty,1} \cdot \\
 & \left(R\cdot\left(\frac{Tj}{N}\right)^{\frac{2}{k}\left(s_{1}-\delta\right)}+R\left(1+R\right)^{\frac{3}{4}}\left(\frac{Tj}{N}\right)^{\frac{1}{4k}\left(1-2\delta\right)}\log\left(T\right)^{\frac{1}{4}}\right)\nonumber \\
 & +\frac{2\delta}{k}\sum_{j=1}^{N-1}\frac{1}{N}\left(\frac{j}{N}\right)^{\frac{2\delta}{k}-1}C_{2}\norm f_{\mathrm{Lip}\alpha}\left(R^{-\alpha}\log\left(R\right)+\left(\frac{Tj}{N}\right)^{-\alpha}\right).\label{eq:diff-N-T-R}
\end{align}
We have,
\[
\sum_{j=1}^{N-1}\frac{1}{N}\left(\frac{j}{N}\right)^{\frac{2\delta}{k}-1}\left(\frac{j}{N}\right)^{\frac{2}{k}\left(s_{1}-\delta\right)}=\sum_{j=0}^{N-1}\frac{1}{N}\left(\frac{j}{N}\right)^{\frac{2s_{1}}{k}-1}.
\]
The last expression is bounded by the integral $\int_{0}^{1}t^{\frac{2s_{1}}{k}-1}\,\d t$,
which is finite. Similarly,
\[
\sum_{j=1}^{N-1}\frac{1}{N}\left(\frac{j}{N}\right)^{\frac{2\delta}{k}-1}\left(\frac{j}{N}\right)^{\frac{1}{4k}\left(1-2\delta\right)}=\sum_{j=0}^{N-1}\frac{1}{N}\left(\frac{j}{N}\right)^{\frac{1+6\delta}{4k}-1}
\]
which is bounded by the integral $\int_{0}^{1}t^{\frac{1+6\delta}{4k}-1}\,\d t$,
which is also finite. Then, there is some constant $C_{3}$ which
depends only on $k$ and $\Gamma$ such that the second row of \ref{eq:diff-N-T-R}
is bounded by
\[
\frac{2\delta}{k}C_{3}C_{2}\norm f_{\infty,1}\left(R^{2}T^{\frac{2}{k}\left(s_{1}-\delta\right)}+R^{2}\left(1+2R\right)^{\frac{3}{4}}T^{\frac{1}{4k}\left(1-2\delta\right)}\log\left(T\right)^{\frac{1}{4}}\right).
\]
Since $0<\frac{1}{N}\le\frac{j}{N}$, the third row of \ref{eq:diff-N-T-R}
is bounded by
\[
\frac{2\delta}{k}\sum_{j=1}^{N-1}\frac{1}{N}\left(\frac{j}{N}\right)^{\frac{2\delta}{k}-1}C_{2}\norm f_{\mathrm{Lip}\alpha}\left(R^{-\alpha}\log\left(R\right)+N^{\alpha}T^{-\alpha}\right).
\]
If we take $C_{3}$ to be also larger than the integral $\int_{0}^{1}t^{\frac{2\delta}{k}-1}$,
we get that this expression is bounded by
\[
\frac{2\delta}{k}C_{3}\norm f_{\mathrm{Lip}\alpha}\left(R^{-\alpha}\log\left(R\right)+N^{\alpha}T^{-\alpha}\right).
\]
In total, for $T>NT_{0}$,
\begin{align*}
\left|\nu_{T}\left(f\right)-\int f\,\d\nu_{\Gamma}\right| & \le C_{1}\norm f_{\infty,1}\left(N^{-\frac{2\delta}{k}}+N^{-1}\log N\right)+\left(C_{\Gamma}+C\right)\norm f_{\mathrm{Lip}\alpha}N^{-\alpha}\\
 & +\frac{2\delta}{k}C_{3}C_{2}\norm f_{\infty,1}\left(R^{2}T^{\frac{2}{k}\left(s_{1}-\delta\right)}+R^{2}\left(1+2R\right)^{\frac{3}{4}}T^{\frac{1}{4k}\left(1-2\delta\right)}\log\left(T\right)^{\frac{1}{4}}\right)\\
 & +\frac{2\delta}{k}C_{3}\norm f_{\mathrm{Lip}\alpha}\left(R^{-\alpha}\log\left(R\right)+N^{\alpha}T^{-\alpha}\right).
\end{align*}
Write this as
\begin{align*}
\left|\nu_{T}\left(f\right)-\int f\,\d\nu_{\Gamma}\right| & \le C_{1}\norm f_{\infty,1}\left(N^{-\frac{2\delta}{k}}+N^{-1}\log N\right)+\left(C_{\Gamma}+C\right)\norm f_{\mathrm{Lip}\alpha}N^{-\alpha}\\
 & +\frac{2\delta}{k}C_{3}\norm f_{\mathrm{Lip}\alpha}N^{\alpha}T^{-\alpha}\\
 & +\frac{2\delta}{k}C_{3}C_{2}\norm f_{\infty,1}\left(R^{2}T^{\frac{2}{k}\left(s_{1}-\delta\right)}+R^{2}\left(1+2R\right)^{\frac{3}{4}}T^{\frac{1}{4k}\left(1-2\delta\right)}\log\left(T\right)^{\frac{1}{4}}\right)\\
 & +\frac{2\delta}{k}C_{3}\norm f_{\mathrm{Lip}\alpha}R^{-\alpha}\log\left(R\right).
\end{align*}
Taking the optimal choice
\begin{align*}
N & =T^{\frac{1}{2}}
\end{align*}
The condition $T>NT_{0}$ becomes $T>T_{0}^{2}$. Then, there is some
constant $C_{4}$ depending only on $\Gamma$ and $k$ such that,
for $T>T_{0}^{2}$ and for $R>0$, we have:
\begin{align*}
\left|\nu_{T}\left(f\right)-\int f\,\d\nu_{\Gamma}\right| & \le C_{4}\left(\norm f_{\infty,1}\left(T^{-\frac{\delta}{k}}+T^{-\frac{1}{2}}\log T\right)+\norm f_{\mathrm{Lip}\alpha}T^{-\frac{\alpha}{2}}\right)\\
 & +\frac{2\delta}{k}C_{3}C_{2}\norm f_{\infty,1}\left(R^{2}T^{\frac{2}{k}\left(s_{1}-\delta\right)}+R^{2}\left(1+2R\right)^{\frac{3}{4}}T^{\frac{1}{4k}\left(1-2\delta\right)}\log\left(T\right)^{\frac{1}{4}}\right)\\
 & +\frac{2\delta}{k}C_{3}\norm f_{\mathrm{Lip}\alpha}R^{-\alpha}\log\left(R\right).
\end{align*}
If $s_{1}-\delta>\frac{1}{8}\left(1-2\delta\right)$, we take $R=T^{-\frac{1}{2k}\left(s_{1}-\delta\right)}$
and in this case the second and third rows are bounded by a constant
times
\[
\left(\norm f_{\infty,1}+\norm f_{\mathrm{Lip}\alpha}\right)\left(T^{\frac{1}{k}\left(s_{1}-\delta\right)}+T^{\frac{\alpha}{2k}\left(s_{1}-\delta\right)}\right)\log\left(T\right).
\]
Since $\alpha\in\left(0,1\right]$, we always have $\frac{\alpha}{2k}\left(s_{1}-\delta\right)>\frac{1}{k}\left(s_{1}-\delta\right)$
(recall that $s_{1}-\delta<0$). Therefore, the total bound we get
in this case is
\[
\left(\norm f_{\infty,1}+\norm f_{\mathrm{Lip}\alpha}\right)T^{\frac{\alpha}{2k}\left(s_{1}-\delta\right)}\log\left(T\right).
\]
If $s-\delta\le\frac{1}{8}\left(1-2\delta\right)$, we take $R=T^{-\frac{1}{16k}\left(1-2\delta\right)}$,
and then the second and third rows are bounded by some constant times
\[
\left(\norm f_{\infty,1}+\norm f_{\mathrm{Lip}\alpha}\right)\left(T^{\frac{1}{8k}\left(1-2\delta\right)}\log\left(T\right)^{\frac{1}{4}}+T^{\frac{\alpha}{16k}\left(1-2\delta\right)}\log\left(T\right)\right).
\]
Since $\alpha\in\left(0,1\right]$, the total bound in this case is
\[
\left(\norm f_{\infty,1}+\norm f_{\mathrm{Lip}\alpha}\right)T^{\frac{\alpha}{16k}\left(1-2\delta\right)}\log\left(T\right)
\]
Combining all these estimates, we get that there is a constant $C_{5}>0$,
depending only on $\Gamma$ and $\rho$, such that for $T>T_{0}^{2}$,
\begin{align*}
\left|\nu_{T}\left(f\right)-\int f\,\d\nu_{\Gamma}\right| & \le C_{5}\left(\norm f_{\infty,1}\left(T^{-\frac{\delta}{k}}+T^{-\frac{1}{2}}\log T\right)+\norm f_{\mathrm{Lip}\alpha}T^{-\frac{\alpha}{2}}\right)\\
 & +C_{5}\left(\norm f_{\infty,1}+\norm f_{\mathrm{Lip}\alpha}\right)\left(T^{\frac{\alpha}{2k}\left(s_{1}-\delta\right)}+T^{\frac{\alpha}{16k}\left(1-2\delta\right)}\right)\log\left(T\right).
\end{align*}
Since $\frac{1}{2}<\delta\le1$ and $0<\alpha\le1$, we have
\[
\left|\frac{\alpha}{16k}\left(1-2\delta\right)\right|\le\frac{\alpha}{16}.
\]
Which is smaller than the exponents $\frac{\alpha}{2}$, and $\frac{1}{2}$.
Also, $\delta>\delta-s_{1}>0$ and therefore
\[
\left|\frac{\alpha}{2k}\left(s_{1}-\delta\right)\right|\le\frac{\delta}{k}.
\]
This means that the error $\left(T^{\frac{\alpha}{2k}\left(s_{1}-\delta\right)}+T^{\frac{\alpha}{16k}\left(1-2\delta\right)}\right)\log\left(T\right)$
is always larger than $T^{-\frac{\delta}{k}}+T^{-\frac{1}{2}}\log T+T^{-\frac{\alpha}{2}}$.
We deduce that as $T$ tends to infinity,
\[
\nu_{T}\left(f\right)=\int f\,\d\nu_{\Gamma}+\o\left(\left(\norm f_{\infty,1}+\norm f_{\mathrm{Lip}\alpha}\right)\left(T^{\frac{\alpha}{2k}\left(s_{1}-\delta\right)}+T^{\frac{\alpha}{16k}\left(1-2\delta\right)}\right)\log\left(T\right)\right),
\]
where the implied constant depends only on $\Gamma$ and $\rho$.
This concludes the proof of theorem \ref{thm:Main_Quant}.

\section*{Appendix - Convergence rate of Fej\'{e}r means\label{sec:Appendix---Convergence}}

In this appendix we derive a quantitative estimate for the approximation
of a H\"{o}lder continuous function on $\mathbb{T}^{2}$ by its Fej\'{e}r
means. This result is certainly well known and follows easily from classical results as in \cite{zygmund1968trigonometric}. The proof is included here for convenience of the reader.

Let $\psi:\left[0,2\pi\right]\times\left[0,2\pi\right]\rightarrow\c$
be H\"{o}lder continuous with exponent $\alpha\in\left(0,1\right]$ and
constant $C_{\psi}$. We wish to approximate $\psi$ by trigonometric
polynomials in a quantitative fashion. We will approximate $\psi$
by its Fej\'{e}r means, which are Cesàro means of Fourier series.

For $N,M\in\nat$, define the $\left(N,M\right)$-th Fourier partial
sum by
\[
s_{N,M}\left(\theta_{1},\theta_{2}\right)=\sum_{n=-N}^{N}\sum_{m=-M}^{M}\hat{\psi}\left(n,m\right)e^{in\theta_{1}}e^{im\theta_{2}},
\]
where 
\[
\hat{\psi}\left(n,m\right)=\frac{1}{4\pi^{2}}\int_{\mathbb{T}^{2}}\psi\left(\theta_{1},\theta_{2}\right)e^{-\left(in\theta_{1}+im\theta_{2}\right)}\,\d\theta_{1}\d\theta_{2}.
\]
The $N$-th Dirichlet kernel $D_{N}:\left[0,2\pi\right]\rightarrow\real$
is defined by
\[
D_{N}\left(\theta\right)=\sum_{k=-N}^{N}e^{ik\theta}=\frac{\sin\left(\left(N-2\right)\theta\right)}{\sin\left(\frac{\theta}{2}\right)}.
\]
It is easy to verify that $s_{N,M}\psi$ is given by the convolution
of $\psi$ and the product of Dirichlet kernels as,
\begin{equation}
s_{N,M}\psi\left(\theta_{1},\theta_{2}\right)=\frac{1}{4\pi^{2}}\int_{\mathbb{T}^{2}}\psi\left(\theta_{1},\theta_{2}\right)D_{N}\left(\theta_{1}-u\right)D_{M}\left(\theta_{2}-v\right)\,\d u\d v.\label{eq:dirichlet-convolution}
\end{equation}
Define:
\[
\sigma_{R}\left(\theta\right)=\frac{1}{R^{2}}\sum_{N,M=0}^{R-1}s_{NM}\left(\theta\right).
\]
$\sigma_{R}$ is the Cesàro sum of the Fourier developments of $\psi$
with $N,M\le R$. It is called the $R$-th\emph{ Fej\'{e}r mean} of $\psi$.
We can also write this as
\[
\sigma_{R}\left(\theta_{1},\theta_{2}\right)=\sum_{n,m=-R}^{R}A_{n,m}\hat{\psi}\left(\theta_{1},\theta_{2}\right)e^{i\left(n\theta_{1}+m\theta_{2}\right)}
\]
for some constants $A_{n,m}$ such that $0\le A_{n,m}\le1$ for all
$m,n\in\z$. Then, using \ref{eq:dirichlet-convolution},
\begin{align*}
\sigma_{R}\left(\theta_{1},\theta_{2}\right) & =\frac{1}{R^{2}}\sum_{N,M=0}^{R-1}\frac{1}{4\pi^{2}}\int_{\mathbb{T}^{2}}\psi\left(u,v\right)D_{N}\left(\theta_{1}-u\right)D_{M}\left(\theta_{2}-v\right)\,\d u\d v\\
 & =\frac{1}{4\pi^{2}}\int_{\mathbb{T}^{2}}\psi\left(\theta_{1}-u,\theta_{2}-v\right)\frac{1}{R^{2}}\sum_{n,m=0}^{R-1}D_{N}\left(u\right)D_{M}\left(v\right)\,\d u\d v.
\end{align*}
Thus, $\sigma_{R}$ is obtained by a convolution of $\psi$ with the
kernel:
\[
\frac{1}{R^{2}}\sum_{n,m=0}^{R-1}D_{N}\left(u\right)D_{M}\left(v\right)=\left(\frac{1}{R}\sum_{N=0}^{R-1}D_{N}\left(u\right)\right)\left(\frac{1}{R}\sum_{M=0}^{R-1}D_{M}\left(v\right)\right).
\]
The function
\[
F_{R}\left(u\right)=\frac{1}{R}\sum_{N=0}^{R-1}D_{N}\left(u\right),
\]
 is called the Fej\'{e}r kernel. It is known (see \cite{zygmund1968trigonometric})
that
\[
F_{R}\left(u\right)=\frac{1}{R}\left(\frac{\sin\left(\frac{Ru}{2}\right)}{\sin\left(\frac{u}{2}\right)}\right)^{2}.
\]
Since $F_{R}\left(u\right)\ge0$ it follows that $\int_{\mathbb{T}^{2}}\left|F_{R}\right|=1$
for all $R$. We can now give an estimate for
\[
\norm{\sigma_{R}-\psi}=\sup_{\theta_{1},\theta_{2}\in\left[0,2\pi\right]}\left|\sigma_{R}\left(\theta_{1},\theta_{2}\right)-\psi\left(\theta_{1},\theta_{2}\right)\right|.
\]
For all $\theta_{1},\theta_{2}\in\left[0,2\pi\right]$,
\begin{align*}
\left|\sigma_{R}\left(\theta_{1},\theta_{2}\right)-\psi\left(\theta_{1},\theta_{2}\right)\right| & \le\frac{1}{4\pi^{2}}\int_{\mathbb{T}^{2}}\left|\psi\left(\theta_{1}-u,\theta_{2}-v\right)-\psi\left(\theta_{1},\theta_{2}\right)\right|F_{R}\left(u\right)F_{R}\left(v\right)\,\d u\d v\\
 & \le\frac{1}{4\pi^{2}}\int_{\mathbb{T}^{2}}C_{\psi}\left(\max\left\{ \left|u\right|,\left|v\right|\right\} \right)^{\alpha}F_{R}\left(u\right)F_{R}\left(v\right)\,\d u\d v.
\end{align*}
Using $\left(\max\left\{ \left|u\right|,\left|v\right|\right\} \right)^{\alpha}\le\left|u\right|^{\alpha}+\left|v\right|^{\alpha}$
and the symmetry between $v$ and $u$,
\begin{align*}
\left|\sigma_{R}\left(\theta_{1},\theta_{2}\right)-\psi\left(\theta_{1},\theta_{2}\right)\right| & \le\frac{C_{\psi}}{4\pi^{2}}\int_{\mathbb{T}^{2}}\left|u\right|^{\alpha}F_{R}\left(u\right)F_{R}\left(v\right)\,\d u\d v\\
 & +\frac{C_{\psi}}{4\pi^{2}}\int_{\mathbb{T}^{2}}\left|v\right|^{\alpha}F_{R}\left(u\right)F_{R}\left(v\right)\,\d u\d v\\
 & =\frac{C_{\psi}}{2\pi^{2}}\int_{\mathbb{T}^{2}}\left|v\right|^{\alpha}F_{R}\left(u\right)F_{R}\left(v\right)\,\d u\d v.
\end{align*}
Since $\int F_{R}\left(v\right)\,\d v=1$, we get
\[
\left|\sigma_{R}\left(\theta_{1},\theta_{2}\right)-\psi\left(\theta_{1},\theta_{2}\right)\right|\le\frac{C_{\psi}}{2\pi^{2}}\int_{0}^{2\pi}\left|u\right|^{\alpha}F_{R}\left(u\right)\,\d u.
\]
Since $\left|u\right|^{\alpha}F_{R}\left(u\right)$ is symmetric under
$\theta\mapsto\theta+\pi$, we have
\[
\left|\sigma_{R}\left(\theta_{1},\theta_{2}\right)-\psi\left(\theta_{1},\theta_{2}\right)\right|\le\frac{C_{\psi}}{\pi^{2}}\int_{0}^{\pi}u^{\alpha}F_{R}\left(u\right)\,\d u
\]
If we split the integral over $\left[0,\pi\right]$ to the intervals
$\left[0,\frac{1}{R}\right]$ and $\left[\frac{1}{R},\pi\right]$,
we get
\begin{align*}
\int_{0}^{\pi}u^{\alpha}F_{R}\left(u\right)\,\d u & =\int_{0}^{\frac{1}{R}}u^{\alpha}F_{R}\left(u\right)\,\d u+\int_{\frac{1}{R}}^{\pi}u^{\alpha}F_{R}\left(u\right)\,\d u.
\end{align*}
Now, for all $0\le x\le\frac{\pi}{2}$, $\left|\sin\left(x\right)\right|\ge\frac{\left|x\right|}{\pi}$.
So, for $0\le\theta\le\pi$, we have that $\left|\sin\left(\frac{\theta}{2}\right)\right|\ge\frac{\left|\theta\right|}{2\pi}.$
Then, since $\left|\sin\left(\frac{Ru}{2}\right)\right|\le1$ for
all $u$,
\begin{align*}
\int_{0}^{\pi}u^{\alpha}F_{R}\left(u\right)\,\d u & \le\int_{0}^{\frac{1}{R}}R^{-\alpha}F_{R}\left(u\right)\,\d u+\int_{\frac{1}{R}}^{\pi}u^{\alpha}\frac{1}{R}\left(\frac{\sin\left(\frac{Ru}{2}\right)}{\sin\left(\frac{u}{2}\right)}\right)^{2}\,\d u\\
 & \le R^{-\alpha}\int_{0}^{1}F_{R}\left(u\right)\,\d u+\frac{1}{R}\int_{\frac{1}{R}}^{\pi}u^{\alpha}\frac{4\pi^{2}}{u^{2}}\,\d u\\
 & =R^{-\alpha}+\frac{4\pi^{2}}{R}\int_{\frac{1}{R}}^{\pi}u^{\alpha-2}\,\d u.
\end{align*}
For $\alpha\in\left(0,1\right)$, 
\[
\int_{\frac{1}{R}}^{\pi}u^{\alpha-2}\,\d u=\frac{1}{\left(\alpha-1\right)}\left(1-R^{\alpha-1}\right).
\]
and for $\alpha=1$,
\[
\int_{\frac{1}{R}}^{\pi}u^{-1}\,\d u=\log\left(\pi R\right).
\]
In total,
\[
\int_{0}^{\pi}u^{\alpha}F_{R}\left(u\right)\,\d u=\begin{cases}
\o\left(R^{-\alpha}\right) & \alpha\in\left(0,1\right)\\
\o\left(\frac{\log R}{R}\right) & \alpha=1
\end{cases}.
\]
We have proved:
\begin{prop}
\label{prop:fejer-holder-error}If $\psi:\mathbb{T}^{2}\rightarrow\c$
is H\"{o}lder continuous with exponent $\alpha$ and constant $C_{\psi}$,
then for $R\in\nat$ and $\sigma_{R}$ the $R$-th Fej\'{e}r mean of $\psi$,
there is some constant $C_{0}$ such that
\[
\norm{\sigma_{R}-\psi}_{\infty}\le C_{\psi}C_{0}R^{-\alpha}\log R.
\]
\end{prop}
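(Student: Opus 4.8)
The plan is to realize $\sigma_{R}$ as a convolution against a nonnegative approximate identity and then read off the modulus of continuity of $\psi$. First I would record that $\sigma_{R}=\psi*(F_{R}\otimes F_{R})$, where
\[
F_{R}\left(u\right)=\frac{1}{R}\sum_{N=0}^{R-1}D_{N}\left(u\right)=\frac{1}{R}\left(\frac{\sin\left(Ru/2\right)}{\sin\left(u/2\right)}\right)^{2}
\]
is the one-dimensional Fej\'{e}r kernel; this follows from the convolution formula \eqref{eq:dirichlet-convolution} for the Dirichlet partial sums together with Fubini. The only two properties of $F_{R}$ I would use are that it is nonnegative and that $\int_{\mathbb{T}}F_{R}=1$, so that the product kernel $F_{R}\otimes F_{R}$ has total integral one on $\mathbb{T}^{2}$. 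These let me write
\[
\sigma_{R}\left(\theta_{1},\theta_{2}\right)-\psi\left(\theta_{1},\theta_{2}\right)=\frac{1}{4\pi^{2}}\int_{\mathbb{T}^{2}}\bigl(\psi\left(\theta_{1}-u,\theta_{2}-v\right)-\psi\left(\theta_{1},\theta_{2}\right)\bigr)F_{R}\left(u\right)F_{R}\left(v\right)\,\d u\,\d v.
\]

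Next I would insert the H\"{o}lder bound $\left|\psi\left(\theta_{1}-u,\theta_{2}-v\right)-\psi\left(\theta_{1},\theta_{2}\right)\right|\le C_{\psi}\left(\max\left\{ \left|u\right|,\left|v\right|\right\} \right)^{\alpha}$ and apply the elementary inequality $\left(\max\left\{ \left|u\right|,\left|v\right|\right\} \right)^{\alpha}\le\left|u\right|^{\alpha}+\left|v\right|^{\alpha}$. By the symmetry of the kernel in $u$ and $v$ and the normalization $\int F_{R}=1$, the two resulting terms coincide, so the double integral collapses to a single scalar integral, and after folding $\left[0,2\pi\right]$ onto $\left[0,\pi\right]$ using the invariance of $u^{\alpha}F_{R}\left(u\right)$ under $u\mapsto u+\pi$ I obtain
\[
\norm{\sigma_{R}-\psi}_{\infty}\le\frac{C_{\psi}}{\pi^{2}}\int_{0}^{\pi}u^{\alpha}F_{R}\left(u\right)\,\d u.
\]
At this point the whole proposition reduces to controlling this one integral.

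The heart of the argument, and the step I expect to need the most care, is the estimate of $\int_{0}^{\pi}u^{\alpha}F_{R}\left(u\right)\,\d u$, which I would split at $u=1/R$. On $\left[0,1/R\right]$ I bound $u^{\alpha}\le R^{-\alpha}$ and use $\int_{0}^{1}F_{R}\le1$, contributing $\o\left(R^{-\alpha}\right)$. On $\left[1/R,\pi\right]$ I use $\left|\sin\left(Ru/2\right)\right|\le1$ together with the lower bound $\left|\sin\left(u/2\right)\right|\ge u/(2\pi)$ valid for $0\le u\le\pi$, giving $F_{R}\left(u\right)\le\frac{4\pi^{2}}{Ru^{2}}$ and reducing the tail to $\frac{4\pi^{2}}{R}\int_{1/R}^{\pi}u^{\alpha-2}\,\d u$. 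For $\alpha\in\left(0,1\right)$ this last integral is bounded uniformly in $R$, so the tail is $\o\left(R^{-1}\right)=\o\left(R^{-\alpha}\right)$, whereas for $\alpha=1$ it equals $\log\left(\pi R\right)$, producing a tail of size $\o\left(R^{-1}\log R\right)$. The logarithm in the critical case $\alpha=1$ is exactly what obstructs a clean $R^{-\alpha}$ bound, so to state one inequality valid for all $\alpha\in\left(0,1\right]$ I would absorb it into a $\log R$ factor. Collecting the two pieces and the prefactor $\frac{C_{\psi}}{\pi^{2}}$ yields $\norm{\sigma_{R}-\psi}_{\infty}\le C_{\psi}C_{0}R^{-\alpha}\log R$ for an absolute constant $C_{0}$, as claimed.
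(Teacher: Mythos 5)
Your proposal is correct and follows essentially the same route as the paper's appendix: realizing $\sigma_{R}$ as convolution with the nonnegative, normalized product Fej\'{e}r kernel, inserting the H\"{o}lder modulus via $\left(\max\left\{\left|u\right|,\left|v\right|\right\}\right)^{\alpha}\le\left|u\right|^{\alpha}+\left|v\right|^{\alpha}$, and splitting the resulting one-dimensional integral at $u=1/R$ with the bound $F_{R}\left(u\right)\le\frac{4\pi^{2}}{Ru^{2}}$ on the tail. The case analysis in $\alpha$ and the absorption of the critical case $\alpha=1$ into the uniform $R^{-\alpha}\log R$ bound also match the paper.
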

\printbibliography

\end{document}